\newtheorem{thm}{Theorem}[section]
\newtheorem{lemma}[thm]{Lemma}
\newtheorem{obs}[thm]{Observation}
\newtheorem{claim}[thm]{Claim}
\newtheorem{definition}[thm]{Definition}
\newcommand{\ch}{\text{ch}}
\newcommand{\oururl}{\url{http://lidicky.name/pub/flexibility}}
\tikzset{vtx/.style={inner sep=1.7pt, outer sep=0pt, circle, fill,draw}}
\tikzset{bndry/.style={inner sep=2pt, outer sep=0pt,fill=white,draw,shape=circle}}
\tikzset{gedge/.style={solid,color=black,line width=1.2pt,opacity=0.75}} 
\tikzset{dashedge/.style={dashed,color=black,line width=1pt,opacity=1}} 
\tikzset{vtxNoFIX/.style={inner sep=3pt, outer sep=0pt,fill=black,draw}}
\def\baseConfigurationdiamond#1#2#3#4{
\node[vtx,label=above:#1] (u) at (-1.5,0) {};
\node[vtx,label=above:#2] (v) at (1.5,0) {};
\node[vtx,label=left:#3] (x) at (0,1.5) {};
\node[vtx,label=left:#4] (y) at (0,-1.5) {};
\draw [gedge] (u) -- (v)  (u) -- (x)  (u) -- (y)  (v) -- (x) (v) -- (y);
}
\newcommand{\confgurationC}{
    \baseConfigurationdiamond{}{}{}{}
    
    \draw[gedge] (x) -- ++(120:0.5);
    \draw[gedge] (x) -- ++(60:0.5);
    
    \draw[gedge] (y) -- ++(240:0.5);
    \draw[gedge] (y) -- ++(300:0.5); 
    \draw[gedge] (y) -- ++(270:0.5);
}
\newcommand{\confgurationD}{
    \baseConfigurationdiamond{}{}{}{}
    
    \draw[gedge] (x) -- ++(90:0.5);
    
    \draw[gedge] (v) -- ++(0:0.5);
    
    \draw[gedge] (y) -- ++(240:0.5);
    \draw[gedge] (y) -- ++(300:0.5); 
}
\newcommand{\confgurationE}{
    \baseConfigurationdiamond{}{}{}{}
    
    \draw[gedge] (x) -- ++(90:0.5);
    
    \draw[gedge] (u) -- ++(150:0.5);
    \draw[gedge] (u) -- ++(210:0.5);    

    \draw[gedge] (v) -- ++(30:0.5);
    \draw[gedge] (v) -- ++(-30:0.5);
   
    \draw[gedge] (y) -- ++(270:0.5);
}
\newcommand{\confgurationF}{
    \baseConfigurationdiamond{}{}{}{}
    
    \draw[gedge] (x) -- ++(90:0.5);
    
    \draw[gedge] (u) -- ++(150:0.5);
    \draw[gedge] (u) -- ++(210:0.5);    

    \draw[gedge] (v) -- ++(0:0.5);
   
    \draw[gedge] (y) -- ++(240:0.5);
    \draw[gedge] (y) -- ++(300:0.5);
}
\newcommand{\confgurationG}{
    \baseConfigurationdiamond{}{}{}{}
    
    \draw[gedge] (y) -- ++(240:0.5);
    \draw[gedge] (y) -- ++(300:0.5);
    \draw[gedge] (y) -- ++(270:0.5);
}
\def\baseConfigurationtriangle#1#2#3{
\node[vtx,label=left:#1] (w) at (1,0) {};
\path (w) ++(60:2) node (u)[vtx,label=right:#2] {};
\path (w) ++(0:2) node (v)[vtx,label=right:#3] {};

\draw[gedge] (u) -- (v);
\draw[gedge] (w) -- (u);
\draw[gedge] (w) -- (v);
}
\newcommand{\baseConfigurationtriangleUnlabeled}{
\node[vtx, label = left:$ $] (w) at (1,0) {};
\path (w) ++(60:2) node (u)[vtx, label = left:$ $] {};
\path (w) ++(0:2) node (v)[vtx, label = right:$ $] {};

\draw[gedge] (u) -- (v);
\draw[gedge] (w) -- (u);
\draw[gedge] (w) -- (v);
}
\newcommand{\baseConfigurationsquare}{
\node[vtx] (u) at (-1,1) {};
\node[vtx] (v) at (-1,-1) {};
\node[vtx] (x) at (1,1) {};
\node[vtx] (y) at (1,-1) {};

\draw[gedge] (u) -- (v);
\draw[gedge] (x) -- (u);
\draw[gedge] (x) -- (y);
\draw[gedge] (y) -- (v);
}
\newcommand{\baseConfigurationsquareUnlabeled}{
\node[vtx, label = left:$ $] (u) at (-1,1) {};
\node[vtx, label = left:$ $] (v) at (-1,-1) {};
\node[vtx, label = right:$ $] (x) at (1,1) {};
\node[vtx, label = right:$ $] (y) at (1,-1) {};

\draw[gedge] (u) -- (v);
\draw[gedge] (x) -- (u);
\draw[gedge] (x) -- (y);
\draw[gedge] (y) -- (v);
}
\newcommand{\chargepath}[5]{
#1 edge[#2] node[#3] {\footnotesize{#4}} #5
}
\renewcommand{\epsilon}{\varepsilon}
\newcommand{\brm}[1]{\operatorname{#1}}
\DeclareMathOperator\df{:=}
\newenvironment{proof}[1][Proof]{\textbf{#1.} }{\ \rule{0.5em}{0.5em}}
\begin{document}
\begin{textblock}{20}(0, 12.5)
\includegraphics[width=40px]{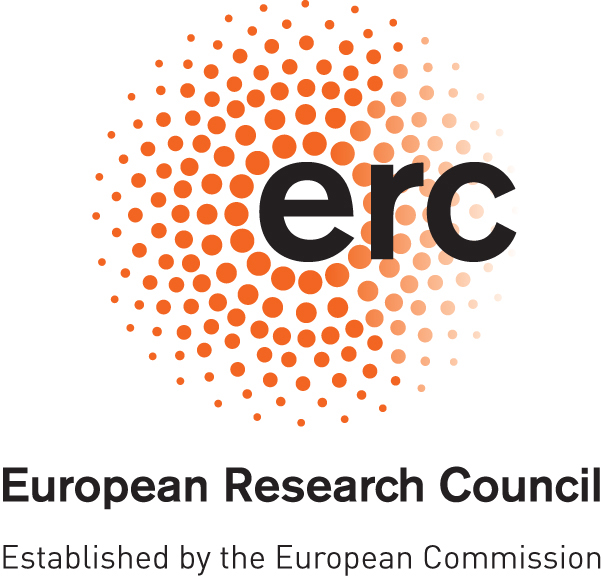}%
\end{textblock}
\begin{textblock}{20}(-0.25, 12.9)
\includegraphics[width=60px]{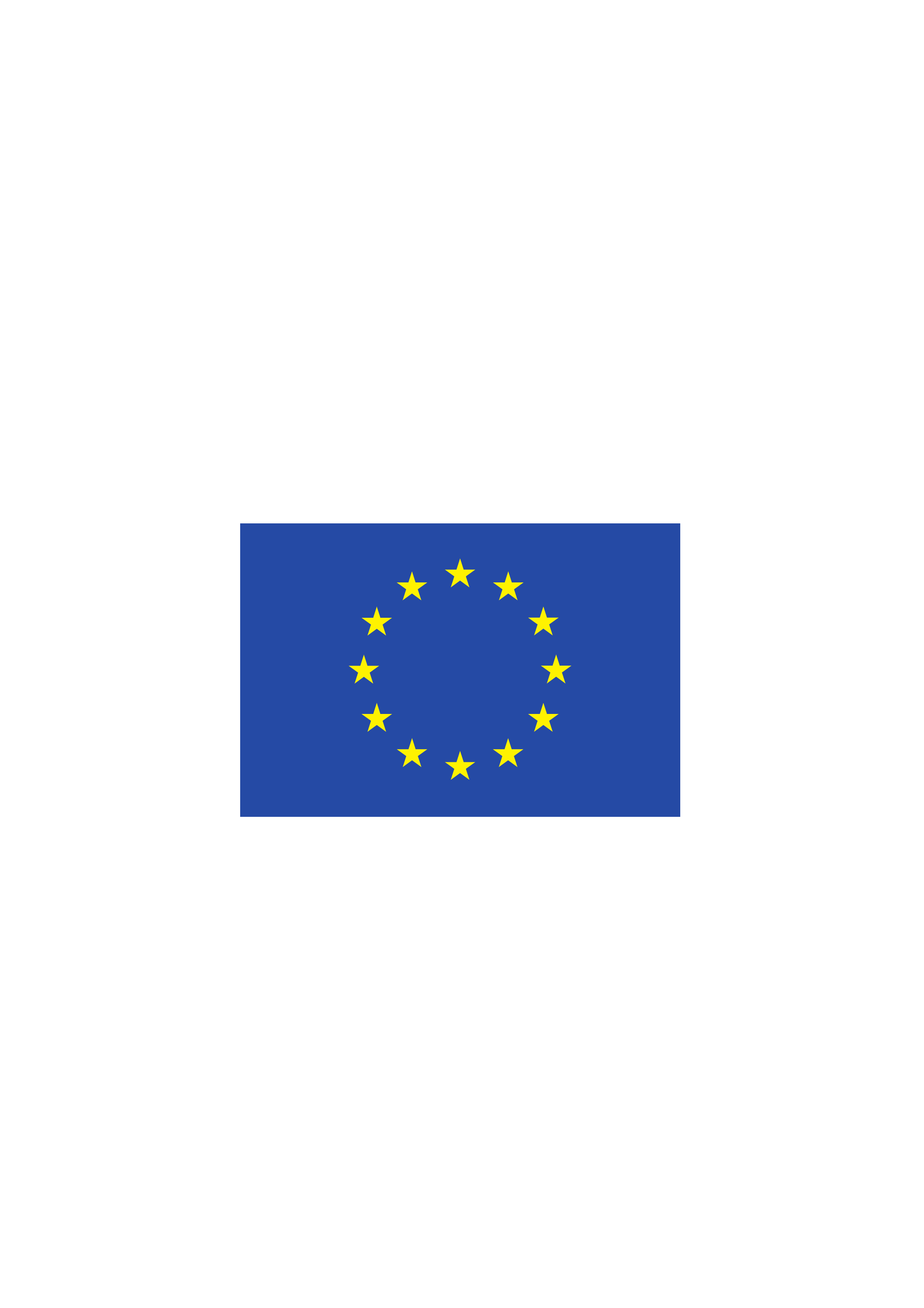}%
\end{textblock}

\tikzset{edge_color0/.style={color=black,line width=1.2pt,opacity=0.5}} 
\tikzset{edge_color1/.style={color=black,dashed,line width=1.2pt,opacity=0.5}} 
\tikzset{unlabeled_vertex/.style={inner sep=1.7pt, outer sep=0pt, circle, fill}}

\title{On Weak Flexibility in Planar Graphs}
\author{Bernard Lidick\'y\thanks{Department of Mathematics, Iowa State University. Ames, IA, USA. E-mail: \texttt{lidicky@iastate.edu} Supported in part by NSF grant DMS-1855653.}
\and
Tom\'a\v{s} Masa\v{r}\'{\i}k\thanks{Faculty of Mathematics, Informatics and Mechanics, University of Warsaw, Poland \& Department of Mathematics, Simon Fraser University, Burnaby, BC, Canada. E-mail: \texttt{masarik@kam.mff.cuni.cz}
T.~Masařík received funding from the European Research Council (ERC) under the European Union’s Horizon 2020 researchand innovation programme Grant Agreement 714704. He completed a part of this work while he was a postdoc at Simon Fraser University in Canada.
}
\and 
Kyle Murphy\thanks{Department of Mathematics, Iowa State University. Ames, IA, USA. E-mail: \texttt{kylem2@iastate.edu}}
\and 
Shira Zerbib\thanks{Department of Mathematics, Iowa State University. Ames, IA, USA. E-mail: \texttt{zerbib@iastate.edu} Supported by NSF grant DMS-1953929.}
}\date{}

\maketitle

\begin{abstract}
Recently, Dvořák, Norin, and Postle introduced flexibility as an extension of list coloring on graphs [JGT 19']. In this new setting, each vertex $v$ in some subset of $V(G)$ has a request for a certain color $r(v)$ in its list of colors $L(v)$. The goal is to find an $L$ coloring satisfying many, but not necessarily all, of the requests.

The main studied question is whether there exists a universal constant $\epsilon >0$ such that any graph $G$ in some graph class $\mathcal{C}$ satisfies at least $\epsilon$ proportion of the requests. 
More formally, for  $k  > 0$ the goal is to prove that for any graph $G \in \mathcal{C}$ on vertex set $V$, with any list assignment $L$ of size $k$ for each vertex, and for every $R \subseteq V$ and a request vector $(r(v): v\in R, ~r(v) \in L(v))$, there exists an $L$-coloring of $G$ satisfying at least $\epsilon|R|$ requests. 
If this is true, then  $\mathcal{C}$ is called {\em $\epsilon$-flexible for lists of size $k$}.

Choi et al.\ [arXiv 20'] introduced the notion of {\em weak flexibility}, where $R = V$.
We further develop this direction by introducing a %
tool to handle weak flexibility.
We demonstrate this new tool by showing that for every positive integer $b$ there exists $\epsilon(b)>0$ so that the class of planar graphs without $K_4, C_5 , C_6 , C_7, B_b$ is  weakly $\epsilon(b)$-flexible for lists of size $4$ (here  $K_n$, $C_n$ and $B_n$ are the complete graph, a cycle, and a book on $n$ vertices, respectively).
We also show that the class of planar graphs without $K_4, C_5 , C_6 , C_7, B_5$ is $\epsilon$-flexible  for lists of size $4$.
The results are tight as these graph classes are not even 3-colorable.

\end{abstract}

\section{Introduction}

A \emph{$k$-coloring} of a graph $G$ is a function $f: V(G) \to S$, where $|S| = k$. The elements of $S$ are often called \emph{colors}. A $k$-coloring of $G$ is called \emph{proper} if adjacent vertices are assigned different colors. 
Suppose that for  each vertex $v$ in $G$, we gave $v$ a list $L(v)$ of available colors. 
A \emph{list coloring} of a graph $G$ is a proper coloring of $G$ where each vertex $v$ is assigned a color from $L(v)$. In particular, for two distinct vertices $u$ and $v$, $L(u)$ and $L(v)$ might be different. 
A graph is {\it $k$-choosable} if every assignment $L$ of at least $k$ colors to each vertex guarantees an $L$-coloring.
The \emph{choosability} of a graph $G$ is the minimum $k$ such that $G$ is $k$-choosable.

In many applications of list coloring, such as scheduling, some vertices may have preferences which are not directly captured by the lists themselves. 
For example, a professor may be willing to teach classes X,Y, or Z but prefers to teach X. Ideally, the scheduler can satisfy the specific requests of each professor, but it is often the case that they cannot. The goal is then to satisfy as many requests as possible. This idea motivates the following definitions.  

A \emph{weighted request} is a function $w$ that assigns a nonnegative real number to each pair $(v,c)$ where $v\in V(G)$ and $c\in L(v)$.  
For $\varepsilon>0$, we say that $w$ is \emph{$\varepsilon$-satisfiable} if there exists an $L$-coloring $\varphi$ of $G$ such that
\[
 \sum_{v\in V(G)} w(v,\varphi(v))\ge\varepsilon\cdot \sum_{v\in V(G),c\in L(v)} w(v,c).
\]

The unweighted variant is defined as follows.
A \emph{request} for a graph $G$ with a list assignment $L$ is a function $r$ with domain $\brm{dom}(r)\subseteq V(G)$ such that $r(v)\in L(v)$ for all $v\in\brm{dom}(r)$.
In the special case that each vertex requests a color, i.e., $\brm{dom}(r)=V(G)$, we call such a request \emph{widespread}.
Analogously, for $\varepsilon>0$, a request $r$ is \emph{$\varepsilon$-satisfiable} if there exists an $L$-coloring $\varphi$ of $G$ such that at least $\epsilon|\brm{dom}(r)|$ vertices $v$ in $\brm{dom}(r)$ receive color $r(v)$. 
We say that a graph $G$ with list assignment $L$ is \emph{$\varepsilon$-flexible}, \emph{weakly  $\varepsilon$-flexible}, or \emph{weighted $\varepsilon$-flexible} if every request, widespread request, or weighted request, respectively, is $\varepsilon$-satisfiable.
Note that weak flexibility does not make sense in the weighted setting since one can set some weights to $0$ to turn off the requests for these vertices. 
If $G$ is (weighted/weakly) $\varepsilon$-flexible for every list assignment with lists of length $k$, we say that $G$ is {\it (weighted/weakly) $\varepsilon$-flexible for lists of size $k$.} 
Note that for $k$-colorable graphs, if the lists are exactly the same the problem becomes trivial as by permuting the colors we can achieve $1\over k$-flexibility~\cite{dvoraknorin}.

The concept of $\varepsilon$-flexibility was introduced by Dvořák, Norin, and Postle~\cite{dvoraknorin}.
Subsequently, it was studied for various sub-classes of planar graphs, e.g., triangle-free~\cite{dvorak}, girth six~\cite{dvorak2}, or $C_4$-free~\cite{masarik}.
Graphs of bounded maximum degree were subsequently characterized in terms of flexibility~\cite{Bradshaw}.

A central notion in graph coloring is that of {\em reducible configurations}, which are local subgraphs that cannot appear in a smallest counterexample because their presence  implies that the graph can be colored from a smaller subgraph by induction. 
Reducible configurations for flexibility are slightly more delicate as we explain in Section~\ref{sec:flexissues}.
Recently, Choi et al.~\cite{choiclemen} proposed a strengthened tool (see Lemma~\ref{lem:weighted} below) for designing reducible configurations for flexibility. 
The authors of~\cite{choiclemen} also introduced the notion of weak flexibility defined above.
They demonstrated that the weak setting allows one to create stronger reducible configurations.

We further develop this direction by strengthening the tools for handling weak flexibility; see Lemma~\ref{lem:weak} in Section~\ref{sec:method}.
We exhibit our new tool by showing the following results for subclasses of planar graphs.

For an integer $n\ge 3$ let 
 $B_n$  denote the book on $n$ vertices, i.e., the graph consisting of $n-2$ triangles sharing an edge.  
Let $C_n$ and $K_n$  denote a cycle and a clique on $n$ vertices, respectively.
Given a set of graphs $\mathcal F$ and a graph $H$, we say that $H$ is \emph{$\mathcal{F}$-free} if there is no subgraph of $H$ isomorphic to any of the graphs in $\mathcal{F}$.

\begin{thm}\label{main567}
There exists $\varepsilon  > 0$ such that every planar $\{K_4,C_5,C_6,C_7,B_5\}$-free graph  is weighted $\varepsilon$-flexible for lists of size $4$.
\end{thm}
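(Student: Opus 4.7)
The plan is to apply the weighted flexibility tool of Choi et al.\ (Lemma~\ref{lem:weighted}) to the class of planar $\{K_4,C_5,C_6,C_7,B_5\}$-free graphs. That lemma reduces a statement of the form ``every graph in class $\mathcal{C}$ is weighted $\varepsilon$-flexible for lists of size $4$'' to the statement that every graph in $\mathcal{C}$ contains at least one member of a prescribed finite list of \emph{reducible configurations}, each of which admits, for every $4$-list assignment, a probability distribution on partial $L$-colorings that hits every vertex--color pair inside the configuration with probability bounded below by a uniform constant. So the task splits into two parts: (i) exhibit a list of configurations that are reducible in the above sense and (ii) prove via discharging that every planar $\{K_4,C_5,C_6,C_7,B_5\}$-free graph contains one of them.

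For part (i), I would use configurations built around small fragments whose colorings are easy to analyze: degree-$3$ vertices, triangles with low-degree apex, and diamonds (two triangles sharing an edge) with various external-edge patterns on the four vertices, cf.\ the diagrams \texttt{configurationA}--\texttt{configurationG} declared in the preamble. The structural constraints of the class are convenient here: $K_4$-freeness and $B_5$-freeness together imply that two triangles share at most an edge and at most two triangles share any edge, while $\{C_5,C_6,C_7\}$-freeness forces every face of a plane embedding to have length $3$, $4$, or at least $8$. For each proposed reducible configuration $H$ I would describe an explicit small probability distribution on its $L$-colorings (for instance, uniform over a carefully chosen family of permutations of a triangle, or uniform swaps of two colors across a diamond) and verify the marginal lower bound that Lemma~\ref{lem:weighted} requires.

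For part (ii), I would set up discharging on a plane embedding. Assign initial charge $\deg(v)-4$ to each vertex and $\ell(f)-4$ to each face, so that by Euler's formula the total charge is $-8$. Then design rules that move charge from vertices of degree at least $5$ and from faces of length at least $8$ into the negatively charged triangles (and into $4$-vertices incident to many triangles). The goal is to show that, in the absence of every configuration on the reducible list, every vertex and face ends with nonnegative charge, contradicting the total of $-8$. Here the exclusion of $C_5,C_6,C_7$ is what makes the big faces ``long enough'' to donate the needed charge to nearby triangles, and it is why these particular cycles are forbidden in the hypothesis.

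The main obstacle will be the discharging case analysis near degree-$4$ vertices that are incident to several triangles, since these vertices start with zero charge and may be sandwiched between two triangles or between triangles and $4$-faces. Ruling out all such local pictures that are \emph{not} already a reducible configuration forces one to track how diamonds attach to their surroundings, and to use $B_5$-freeness (at most two triangles on any edge) repeatedly to limit the possibilities. A secondary obstacle is the explicit probabilistic verification for the more elaborate diamond configurations with external edges at the apex vertices $x,y$: one must construct a coloring distribution compatible with arbitrary $4$-lists and show that every vertex--color pair inside the diamond has probability bounded away from $0$, which amounts to a short but delicate case analysis over the possible intersection patterns of the four lists.
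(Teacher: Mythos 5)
Your overall architecture is the one the paper uses: invoke Lemma~\ref{lem:weighted} of Choi et al., exhibit a finite list of reducible configurations built from $3$-vertices, triangles and diamonds, and prove unavoidability by discharging on a plane embedding with charges $\deg(v)-4$ and $\ell(f)-4$ summing to $-8$, exploiting that all faces have length $3$, $4$ or at least $8$ and that $B_5$-freeness caps the number of triangles on an edge (this is exactly how Lemma~\ref{lem:unavoidable B5} and Lemma~\ref{deglem} are used, and the hard cases are indeed the $4$-vertices sitting on diamonds; the paper additionally parks charge on edges via rules (R2)--(R3), a technical convenience you could reproduce or avoid).

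There is, however, a genuine gap in what you propose to verify for each configuration. Lemma~\ref{lem:weighted} does not ask you to construct, for every $4$-list assignment, a probability distribution on colorings of the configuration with uniformly bounded marginals; that distribution is what the \emph{proof} of the lemma manufactures once its hypothesis holds. Its hypothesis is an $(\mathcal{F},4,b)$-resolution, i.e.\ that each configuration $H$ is $(\mathcal{F},4)$-boundary-reducible in the sense of Definition~\ref{reduce}: the reduced part $R$ must be $L$-colorable for every $\bigl((4-\deg_G+\deg_R)\downarrow v\bigr)$-assignment for each $v\in R$ (FIX), and for every $\mathcal{F}$-free set $I\subseteq R$ with $|I|\le 2$ it must be $L$-colorable for every $(4-\deg_G+\deg_R-1_I)$-assignment (FORB). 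These degree-corrected list sizes encode the already-colored neighbors outside $R$ (including the retained boundary vertices such as the $5^+$ side vertices of the diamond configurations), and the sets $I$ encode neighborhoods of vertices that are still uncolored; your proposed check, carried out over full lists of size $4$ with no reference to $\deg_G-\deg_R$ or to such sets $I$, is neither what the lemma needs nor strong enough to imply it (a distribution hitting all vertex--color pairs when every list has size $4$ says nothing about extendability when the lists have been cut down adversarially by external colored neighbors, nor about avoiding a prescribed color on a prescribed pair of vertices). The fix is straightforward but changes the nature of the verification: replace the per-configuration probabilistic argument by the deterministic (FIX)/(FORB) checks (in the paper these are done greedily, partly by computer), and only then cite Lemma~\ref{lem:weighted}.
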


\begin{thm}\label{main567weak}
There exists $\varepsilon=\varepsilon(b) > 0$ such that every planar $\{K_4,C_5,C_6,C_7,B_b\}$-free graph is weakly $\varepsilon$-flexible for lists of size $4$.
\end{thm}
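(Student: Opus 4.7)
The plan is to prove Theorem~\ref{main567weak} by the discharging method, using Lemma~\ref{lem:weak} of Section~\ref{sec:method} as the main tool to certify reducible configurations in the weak setting. First, I take a minimum counterexample $G$: a planar $\{K_4,C_5,C_6,C_7,B_b\}$-free graph that is not weakly $\varepsilon(b)$-flexible for lists of size $4$, where $\varepsilon(b)$ will be fixed at the end. Minimality together with Lemma~\ref{lem:weak} converts every reducible configuration into a local certificate: it suffices to exhibit, for each configuration $H\subseteq G$, a family of partial $L$-colorings of a small ``precolored'' subset of $V(H)$ such that every vertex of $H$ receives its requested color in a positive fraction of the family, and each member of the family extends by minimality to an $L$-coloring of $G\setminus H$.

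Second, I build the reducible configuration catalogue. Since $G$ is $\{C_5,C_6,C_7\}$-free, every face has length $3$, $4$, or at least $8$. Combined with $K_4$-freeness, triangles can only share edges in the form of books, and $B_b$-freeness caps book length by $b-1$. Using Lemma~\ref{lem:weak}, I plan to establish reducibility of (i) small-degree vertices lying in light neighborhoods, (ii) short paths of $4$-faces incident to triangles, and (iii) a ``long-book'' configuration capturing $B_{b-1}$: in a book of length close to $b-1$, the interior degree-$3$ vertices share their non-spine neighbors, which gives Lemma~\ref{lem:weak} enough room to satisfy every request on the book with positive probability. This last configuration is where the dependence $\varepsilon=\varepsilon(b)$ enters, since the probability of satisfying a request inside a book shrinks with $b$.

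Third, I run the discharging argument. Starting from Euler's formula
\[
 \sum_{v\in V(G)}(d(v)-4)+\sum_{f\in F(G)}(d(f)-4)=-8,
\]
I assign each vertex $v$ initial charge $d(v)-4$ and each face $f$ initial charge $d(f)-4$, then design rules that redistribute charge from faces of length at least $8$ and from high-degree vertices to triangles, $4$-faces, and low-degree vertices. The goal is to show that, after exhausting the reducible catalogue, every vertex and every face has nonnegative final charge, contradicting the total of $-8$. The rules must be tuned so that long books extract enough charge from their two spine endpoints, and so that $4$-faces adjacent to triangles are handled uniformly.

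The main obstacle will be the treatment of long books. For bounded $b$ (as in Theorem~\ref{main567}) book length is bounded, and one obtains a uniform $\varepsilon$; for general $b$ one must produce a reducible configuration that covers \emph{all} books up to the maximal length $b-1$, and simultaneously design discharging rules that absorb the charge demanded by possibly many such books. Balancing these two requirements, and computing the precise $\varepsilon(b)$ coming from Lemma~\ref{lem:weak} applied to a near-maximal book, is the technical heart of the proof.
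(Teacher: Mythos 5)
Your high-level frame (a discharging unavoidability argument combined with Lemma~\ref{lem:weak}) is the same as the paper's, but the mechanism you propose for the dependence on $b$ is not viable, and it is precisely at that point that the paper's new machinery is needed. In the paper, the unavoidable catalogue (D1)--(D12) and all discharging rules use only $\{K_4,C_5,C_6,C_7\}$-freeness; books appear nowhere in the catalogue. The hypothesis $B_b$-free enters only through the (TIGHT) condition of an \emph{enhanced} weak resolution: with $k=4$, a $2$-vertex has $k-2$ external neighbors, so it can never serve as a Fix vertex in the enhanced sense, and it is admitted into the resolution only as a bare single-vertex configuration. The danger is that many such $2$-vertices attach to one adjacent pair $uw$ of a configuration $H_j$ chosen later; then the probabilistic extension needs property~(iii) for a set of size $k-2$ that need not be loose. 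The proof of Lemma~\ref{lem:weak} absorbs these ``tight'' vertices into $H_j$, and $B_b$-freeness is what bounds their number by $\beta\le 10b$, inflating the configuration size to $b+\beta$ and producing $\varepsilon(b)$ through $p=k^{-(b+\beta)}$. Your plan instead derives the $b$-dependence from an unavoidable ``long-book'' configuration. That cannot work as stated: graphs in the class need not contain any book at all (e.g.\ triangle-free members), so your catalogue and discharging must already succeed without the book configuration, and adding it neither forces anything nor addresses the genuine obstruction, which is verifying the (FORB)-type condition on \emph{adjacent pairs} (size $k-2=2$ sets) and handling $2$-vertices whose two neighbors lie in a configuration treated later. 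The paper shows concretely (e.g.\ (D10)) that several needed configurations fail the full (FORB)/(FIX) requirements, which is why the enhanced definitions, loose sets, and the refactoring argument are introduced; your proposal contains no substitute for this.

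There are also secondary inaccuracies in how you invoke Lemma~\ref{lem:weak}. The lemma requires an enhanced weak resolution of the whole graph, i.e.\ an iterated decomposition $G_0\supset G_1\supset\cdots$ obtained by applying the unavoidability lemma to each $G_i$ (which stays in the class), not a single configuration extracted from a minimal counterexample; the conditions to verify are (FIX) and (FORB), i.e.\ colorability of the reduced part under prescribed list sizes, not the existence of a family of partial colorings in which \emph{every} vertex of $H$ gets its request with positive frequency -- in the weak setting only the Fix vertices carry that guarantee, which is exactly why the final bound loses a factor $1/b$; and the colorings are extended inward from $G_i$ to $G_{i-1}$, not outward ``to an $L$-coloring of $G\setminus H$.'' Finally, the proposal supplies no explicit catalogue, rules, or charge verification, so even setting aside the issues above it is a plan rather than a proof; the technical heart you defer (``balancing'' books against discharging) is the part that, in the paper, is resolved by removing books from the discharging entirely and confining $B_b$ to the (TIGHT) bookkeeping.
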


The results in Theorems~\ref{main567} and~\ref{main567weak} are tight as in general such graphs are not even 3-colorable. This is exemplified by the construction in Figure~\ref{f:construction}. This construction implies:

\begin{obs}\label{o:construction}
For every $\ell,b \geq 5$ exists a $\{K_4,B_b\}$-free planar graph $G$ that does not contain any cycle $C_k$ of length $5\le k\le \ell$, such that $G$ is a not $3$-colorable.
\end{obs}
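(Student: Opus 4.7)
The plan is to construct, for each pair $\ell,b\ge 5$, an explicit graph $G=G(\ell,b)$ meeting all five conditions. The $b$-parameter turns out to be free: since $B_b$ contains $B_5$ as a subgraph for every $b\ge 5$, it is enough to build $G$ so that no edge lies in more than two triangles, and then $G$ will automatically be $B_b$-free. So the real work is driven by $\ell$.

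My candidate is a chain of $n:=\lceil \ell/2 \rceil$ copies of $K_4-e$, closed by a single edge. I would introduce ``main'' vertices $v_0,v_1,\ldots,v_n$ together with ``gadget'' pairs $\{a_i,b_i\}$ for $0\le i\le n-1$; for each $i$ I add the five edges $v_ia_i$, $v_ib_i$, $v_{i+1}a_i$, $v_{i+1}b_i$, $a_ib_i$, so that $\{v_i,v_{i+1},a_i,b_i\}$ spans a copy of $K_4-e$ with missing pair $\{v_i,v_{i+1}\}$. Finally I add the single closing edge $v_0v_n$.

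I would verify the five conditions in turn. Planarity is immediate by drawing the gadgets left-to-right and routing $v_0v_n$ through the outer face. The graph is $K_4$-free since each gadget is $K_4-e$, and the closing edge joins two vertices with disjoint open neighborhoods (for which we use $n\ge 2$). A local count gives $B_5$-freeness: the ``spine'' edges $a_ib_i$ lie in exactly two triangles, while every other edge lies in exactly one. Non-$3$-colorability is the standard observation that any proper $3$-coloring of $K_4-e$ assigns the same color to its two non-adjacent vertices; chaining along the gadgets forces $v_0=v_1=\cdots=v_n$, contradicting the edge $v_0v_n$.

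The main obstacle is the short-cycle condition. I would handle it by classifying cycles of $G$ according to whether they use $v_0v_n$. Any cycle avoiding $v_0v_n$ lies entirely in $G-v_0v_n$, whose blocks are precisely the individual gadgets (since $v_1,\ldots,v_{n-1}$ are cut vertices), so the cycle lies inside a single $K_4-e$ and has length $3$ or $4$. Any cycle using $v_0v_n$ consists of that edge together with a simple $v_0$--$v_n$ path in $G-v_0v_n$; such a path must enter and leave each of the $n$ gadgets, contributing at least two edges per gadget, so the cycle has length at least $2n+1>\ell$. Hence $G$ has no cycle of length $k$ with $5\le k\le \ell$, as required.
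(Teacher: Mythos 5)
Your construction is exactly the one the paper uses (Figure~\ref{f:construction}): a chain of $K_4-e$ gadgets glued at their side vertices, closed by a single long edge, with non-$3$-colorability forced by color propagation through the diamonds. Your verification of planarity, $K_4$- and $B_b$-freeness, and the cycle-length analysis via the block structure is correct and simply spells out details the paper leaves to the figure, so this is the same approach, carried out correctly.
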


\begin{figure}[bt]\label{f:construction}
\begin{center}
\begin{tikzpicture}[scale = 1.5]

\node[vtx,label = below:\footnotesize{$1$}] (a) at (0,0) {};
\node[vtx,label = above:\footnotesize{$2$}] (b) at (0.5,0.5) {};
\node[vtx,label = below:\footnotesize{$3$}] (c) at (0.5,-0.5) {};
\node[vtx,label = below:\footnotesize{$1$}] (d) at (1,0) {};
\draw[gedge] (a)--(b) (a)--(c) (d)--(b) (d)--(c) (b)--(c);
\node[vtx,label = above:\footnotesize{$2$}] (f) at (1.5,0.5) {};
\node[vtx,label = below:\footnotesize{$3$}] (g) at (1.5,-0.5) {};
\node[vtx,label = below:\footnotesize{$1$}] (h) at (2,0) {};
\draw[gedge] (d)--(f) (d)--(g) (h)--(f) (h)--(g) (f)--(g);
\node[vtx,label = above:\footnotesize{$2$}] (i) at (2.5,0.5) {};
\node[vtx,label = below:\footnotesize{$3$}] (j) at (2.5,-0.5) {};
\node[vtx,label = below:\footnotesize{$1$}] (k) at (3,0) {};
\draw[gedge] (h)--(i) (h)--(j) (k)--(i) (k)--(j) (i)--(j);
\node[vtx,label = above:\footnotesize{$2$}] (l) at (3.5,0.5) {};
\node[vtx,label = below:\footnotesize{$3$}] (m) at (3.5,-0.5) {};
\node[vtx,label = below:\footnotesize{$1$}] (n) at (4,0) {};
\draw[gedge] (k)--(l) (k)--(m) (n)--(l) (n)--(m) (l)--(m);
\node[vtx,label = above:\footnotesize{$2$}] (o) at (4.5,0.5) {};
\node[vtx,label = below:\footnotesize{$3$}] (p) at (4.5,-0.5) {};
\node[vtx,label = below:\footnotesize{$?$}] (q) at (5,0) {};
\draw[gedge] (n)--(o) (n)--(p) (q)--(o) (q)--(p) (o)--(p);
\draw[gedge] (0,0) to [out = 90, in = 180] (2.5,2) to [out = 0, in = 90] (5,0);
\end{tikzpicture}
\end{center}
  \caption{A construction proving Observation~\ref{o:construction} with an attempt for a $3$-coloring that fails. }
\end{figure}
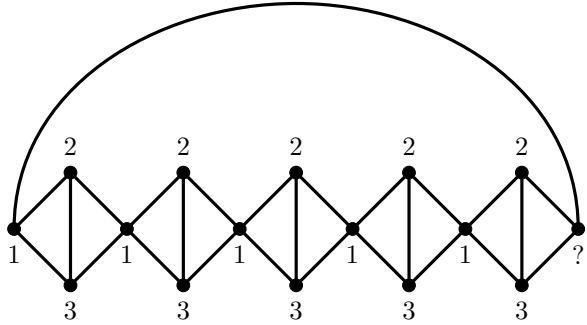

Furthermore, our results follow a recent line of research trying to narrow the gap between known degeneracy upper-bounds and choosability lower-bounds, in particular on subclasses of planar graphs, as is described below. 
We say that a graph $G$ is \emph{$d$-degenerate} if each induced subgraph of $G$ contains a vertex of degree at most $d$.
It is  easy to observe that $d$-degenerate graphs are $(d+1)$-choosable.
A similar statement holds for flexibility as well:
in~\cite{dvoraknorin} it was proved that $d$-degenerate graphs with lists of size $d+2$ are weighted $\epsilon$-flexible.
Therefore, as $C_5$-free planar graphs are 3-degenerate~\cite{wanglih}, they are $\epsilon$-flexible for lists of size 5.
The same is true for $C_6$-free planar graphs~\cite{fijavzjuvan}.
For $C_3$-free graphs, Dvo\v{r}\'{a}k, Masa\v{r}\'{\i}k, Mus\'{\i}lek, and Pangr\'{a}c~\cite{dvorak} showed that they are weighted $\epsilon$-flexible for lists of size 4 and that this the result is tight.
Surprisingly, the discharging proof in \cite{dvorak} is quite involved compared to the easy observation that $C_3$-free planar graphs are $3$-degenerate, which implies $4$-choosability.
An analogous result holds for  $\{C_3,C_4,C_5\}$-free graphs, where list of size $3$ are sufficient for weighted  $\epsilon$-flexibility and the result is tight~\cite{dvorak2}.

When only $C_4$ is forbidden, Masa\v{r}\'{\i}k~\cite{masarik} proved that lists of size 5 are sufficient for weighted $\epsilon$-flexibility.
However, it is unknown whether the result is tight as those graphs are 4-choosable~\cite{lamxu} (but not necessarily 3-degenerate).
There were attempts to bring down the list size to 4 but so far only partial results are known in this direction: planar graphs that do not contain $C_4$ and $C_3$ at distance at most 1~\cite{choiclemen} or $\{C_4,C_5\}$-free planar graphs \cite{yangyang}.
See \cite[Table~1]{choiclemen} for a comprehensive overview of known results for various subclasses of planar graphs.
Our results aim to improve this narrow gap as they show that lists of size 4 are sufficient even for planar graphs in which some copies of $C_3$ and $C_4$ are allowed.

\section{Methods - informal discussion}~\label{sec:flexissues}

The purpose of this section is to informally describe some of the difficulties one faces when trying to  extend a list-coloring proof to a flexibility proof. This discussion serves as the intuition behind the formal definitions in the next section.

As in previous related papers mentioned above,  we use the discharging method to obtain our results. For an introduction to the discharging method see~\cite{discharging}.
A typical discharging proof that a graph $G$ is $L$-list-colorable gives a list of unavoidable \emph{reducible configurations}, which are subgraphs of $G$ that cannot appear in a minimal counterxample.
The goal is to decompose $G$ into subgraphs $R_1,\ldots,R_N$ such that $R_i$ is a reducible configuration in $G[R_i \cup \dots \cup R_N]$ (this will be defined as a \emph{resolution} later), so that
  any $L$-coloring of $G[R_{i+1}\cup \dots \cup R_N]$ can be extended to an $L$-coloring of $G[R_{i}\cup \dots \cup R_N]$. 
Extending the coloring in a descending order from $R_N$ to $R_1$ gives an $L$-coloring of $G$.

When requests are introduced, this method becomes more difficult.
To explain this, assume 
for simplicity that every vertex has a request.
If we manage to accommodate one request from each $R_i$ and each $R_i$ has at most $b$ vertices, then we would satisfy $n/b$ requests, showing that $G$ is $\varepsilon$-flexible for $\varepsilon=1/b$, and our job would be done. However, this is not necessarily possible. Indeed, 
suppose $v \in R_{i}$ has some request $r(v)$ and let $\varphi$ be an $L$-coloring of $G[R_{i+1} \cup \dots \cup R_{N}]$.
Suppose further that $v$ has one neighbor $u$ in $R_{i+1} \cup \dots \cup R_{N}$. If $\varphi(u)=r(v)$, there is no way to simply extend $\varphi$ and accommodate the request of $v$. 
Thus more changes to $\varphi$, such as recoloring $u$, would have to occur to accommodate $r(v)$.
In addition to this issue, it may also be the case that $r(v)$ cannot be satisfied because $R_i$ itself prevents it.

This means that reducible configurations for flexibility need to provide slightly more freedom in the colorings they allow.
The easier problem to deal with is that $r(v)$ cannot be satisfied because $R_i$ itself prevents it.
This can be patched by adding a requirement that for any one vertex $x$ in $R_i$, the coloring $\varphi$ extends to $R_i$ even if $x$ has a list of size $1$ after removing the colors of already colored neighbors of $x$ in  $R_{i+1}\cup \cdots \cup R_N$.
For $v$, this would be used in case $L(v)=\{r(v)\}$ and $\varphi(u) \neq r(v)$.
This requirement will be called (FIX) in the formal definitions.

The problem occurring when $r(v)$ cannot be satisfied because its neighbor $u$ in $R_{i+1} \cup \dots \cup R_{N}$  is already colored $r(v)$ is more complicated to solve. The idea is the following.
Instead of constructing just one $L$-coloring $\varphi$, one needs to construct $L$-colorings $\varphi_1,\ldots,\varphi_\ell$ and in some of them, $u$  gets colored by a color different  than $r(v)$.
Then  $\varphi_1,\ldots,\varphi_\ell$ can be extended to  $\varphi'_1,\ldots,\varphi'_{\ell'}$, where $r(v)$ is satisfied in some of them. 
At the end, this process gives a set of $L$-colorings of $G$ and at least one of them  satisfies a positive fraction of the requests.
Formally, this is done by creating a probability distribution on $L$-colorings of $G$.

In order to make this idea work, there must be a sufficient variety of proper colorings for each reducible configuration.
In our example, if we want to color $v$ by $r(v)$, we cannot use $r(v)$ on $u$. We need to address this when we are coloring $u$ and remove $r(v)$ from its list. 
Further, we would need to do this for each neighbor of $v$ in $R_{i+1}\cup \dots \cup R_{N}$. 
This is achieved in the following way.
When we are $L$-coloring $R_i$, we look at all subsets $I \subseteq V(R_i)$ of vertices that could form a neighborhood of a vertex in $R_1 \cup \ldots \cup R_{i-1}$, i.e. in the set of not yet colored vertices. Individually for each $I$, we show that any proper $L$-coloring $\varphi$ of $R_{i+1}\cup \dots \cup R_{N}$ can still be extended to $R_i$ even if we decrease the sizes of the lists of vertices in $I$ by $1$.
This will be called (FORB) in the formal definitions.

To summarize, the reducible configurations for flexibility must have size bounded by a constant, any one vertex can be precolored (FIX), and for different subsets of vertices, reducing their lists sizes by $1$ does not break the extendability of the coloring (FORB).

The main feature of weak flexibility is that instead of demanding in (FIX) that ``any one vertex in $R_i$ can be precolored'', it is enough to ask for
``at least one vertex in $R_i$ can be precolored''. 
It is then easier to satisfy this version of (FIX). 

We introduce an additional trick, where we ask ``at least one vertex in $R_i$ with  few outside neighbors can be precolored''. This makes satisfying (FIX) more difficult since the the reducible configurations needs a vertex of small degree but it helps a lot with checking (FORB).

\section{Methods - definitions and lemmas}~\label{sec:method}

We  use  some of the notation and  tools introduced  in \cite{choiclemen, dvoraknorin, dvorak, dvorak2}. In particular, 
our definitions are quite similar to those used in \cite{choiclemen}.

Let $1_I$ denote the characteristic function of $I$, i.e., $1_I(v)=1$ if $v\in I$ and $1_I(v)=0$ otherwise.
Let $G$ be a graph.
Given a function $f:V(G)\to\mathbb{Z}$ and a vertex $v\in V(G)$, let $f\downarrow v$ denote the function satisfying $(f\downarrow v)(w)=f(w)$ for $w\neq v$ and $(f\downarrow v)(v)=1$.
We will use $f\downarrow v$ to indicate that the list size at vertex $v$ has been reduced to $1$. In other words, $f\downarrow v$ means that $v$ has been ``precolored''.
A list assignment $L$ is an \emph{$f$-assignment} if $|L(v)|\ge f(v)$ for all $v\in V(G)$.
We will let $\deg_G$ be the function from $V(G)$ to $\mathbb{Z}$ which maps each vertex to its degree.
If $X \subset V(G)$, then we let $\deg_X$ equal $\deg_{G[X]}$, where $G[X]$ is the induced subgraph of $G$ consisting of the vertices in $X$. 
 
 Given a set of graphs $\mathcal F$ and a graph $H$, a set $I\subseteq V(H)$ is \emph{$\mathcal{F}$-free} if the graph $H$ together with one additional vertex $u$ adjacent to all of the vertices in $I$ does not contain any subgraph isomorphic to a graph in $\mathcal{F}$.
Throughout the following definitions, let  $H$ be an induced subgraph of a graph $G$, let $\mathcal{F}$ be a set of graphs, and let $k$ be a positive integer.

\begin{definition}[$(\mathcal{F},k)$-boundary-reducibility]
\label{reduce} 
 We say that 
$H$ is an \emph{$(\mathcal{F},k)$-boundary-reducible} subgraph if there exists a set $R \subseteq V(H)$ such that $R \neq \emptyset$ and
\begin{itemize}
  \item[\emph{(FIX)}] for every $v\in R$, $H[R]$ is $L$-colorable for every (($k-\deg_G+\deg_R)\downarrow$ $v$)-assignment $L$, and 
  \item[\emph{(FORB)}] for every $\mathcal{F}$-free set $I \subseteq R$ of size at most $k-2$, $H[R]$ is $L$-colorable for every ($k-\deg_G+\deg_R-1_I$)-assignment $L$. 
\end{itemize}

\end{definition}

\begin{definition}[weak $(\mathcal{F},k)$-boundary-reducibility]\label{weak_reduce}
  We say that $H$ is \emph{weakly $(\mathcal{F},k)$-boundary-reducible} if it satisfies {\em (FORB)} and there exists at least one vertex $v$ satisfying {\em (FIX)} from Definition~\ref{reduce}. In this case, we denote $v$ by $\mathrm{Fix}(H)$.
\end{definition}

In both of the preceding definitions, we will occasionally refer to the set $V(H)\setminus R$ as the \textit{boundary} of the configuration and the set $R$ as the \emph{reduced part} of the configuration.  
Note that (FORB) in particular implies that $\deg_G-\deg_R\le k-2$ for all $v\in R$.

\begin{definition}[$(\mathcal{F},k,b)$-resolution]\label{def:resolution}
Let $G$ be an $\mathcal{F}$-free graph with lists of size $k$.
An \emph{$(\mathcal{F},k,b)$-resolution} of $G$ is a set $\{G_0, G_1, \dots, G_M\}$ of 
 subgraphs of $G$ such that for $i \geq 1$, $H_i$ is an induced $(\mathcal{F},k)$-boundary-reducible subgraph of $G_{i-1}$ with reduced part $R_i$ and 
 \[
  G_i\df G- \bigcup_{j=1}^{i} {R_j}.
\]
Additionally, for each $i \geq 1$ $|R_i|\le b$ and $G_{M}$ is itself a $(\mathcal{F},k)$-boundary-reducible graph with empty boundary and order at most $b$. 
For technical reasons, let $G_{M +1}\df\emptyset$.

\end{definition}

A \emph{weak $(\mathcal{F},k,b)$-resolution} is defined analogously, save that it uses weak $(\mathcal{F},k)$-boundary-reducibility in the place of $(\mathcal{F},k)$-boundary-reducibility.

The following lemma is the main tool  we use for proving weighted $\epsilon$-flexibility.

\begin{lemma}[Lemma~13~in~\cite{choiclemen}]\label{lem:weighted}
  For  integers $k\ge 3$ and $b\ge 1$ and for a set $\mathcal{F}$ of forbidden subgraphs, 
  let $G$ be an $\mathcal{F}$-free graph with an $(\mathcal{F},k,b)$-resolution. 
 Then there exists an $\varepsilon>0$ such that $G$ is weighted $\varepsilon$-flexible for  lists of size $k$.
 Furthermore, if the request is widespread and $G$ has a weak $(\mathcal{F},k,b)$-resolution, 
 then $G$ is weakly $\left(\varepsilon\cdot \frac{1}{b}\right)$-flexible for lists of size $k$.
\end{lemma}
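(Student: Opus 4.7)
The plan is to proceed by reverse induction on the resolution, building for each $i = M, M-1, \ldots, 0$ a probability distribution $\mathcal{D}_i$ on proper $L$-colorings of $G_i$. The key invariant is the existence of a constant $\gamma = \gamma(k,b) > 0$ such that for every $v \in V(G_i)$ and every $c \in L(v)$,
\[
\Pr_{\varphi \sim \mathcal{D}_i}[\varphi(v) = c] \ge \gamma.
\]
Once $\mathcal{D}_0$ on $L$-colorings of $G$ is obtained, weighted $\varepsilon$-flexibility follows immediately with $\varepsilon = \gamma$ by linearity of expectation, since $\mathbb{E}\bigl[\sum_v w(v, \varphi(v))\bigr] = \sum_{v,c} w(v,c)\Pr[\varphi(v) = c] \ge \gamma \sum_{v,c} w(v,c)$.

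For the base case, $G_M$ has empty boundary, is itself $(\mathcal{F},k)$-boundary-reducible, and satisfies $|V(G_M)| \le b$; (FIX) produces, for any chosen $v^* \in V(G_M)$ and $c^* \in L(v^*)$, an $L$-coloring with $\varphi(v^*) = c^*$, and mixing these uniformly over all such pairs yields a base distribution satisfying the invariant. For the inductive step, I would sample $\psi \sim \mathcal{D}_i$ and extend $\psi$ to a random proper coloring of $R_i$ using a mixture of two strategies: with small probability $\alpha$, pick a uniformly random pair $(v^*, c^*) \in R_i \times L(v^*)$ and apply (FIX) at $v^*$ with color $c^*$, which is feasible whenever $c^*$ does not already appear on a boundary neighbor of $v^*$ under $\psi$; with probability $1 - \alpha$, complete $\psi$ on $R_i$ by invoking (FORB) with the appropriate $\mathcal{F}$-free set $I$. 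The sets $I \subseteq R_i$ that arise naturally in the proof are neighborhoods, within $R_i$, of vertices in $R_1 \cup \cdots \cup R_{i-1}$; these are automatically $\mathcal{F}$-free because $G$ is $\mathcal{F}$-free (a phantom vertex adjacent to $I$ just mimics the real vertex of $G$, and the remark following Definition~\ref{reduce} ensures $|I| \le k - 2$). Choosing $\alpha$ small enough preserves the invariant at every new vertex of $R_i$, with $\gamma$ remaining a constant depending only on $k$ and $b$.

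For the weak statement with widespread requests, (FIX) is only guaranteed at the single vertex $\mathrm{Fix}(H_i)$, so the scheme is modified: at each step $i$ the (FIX)-branch always targets the specific pair $(\mathrm{Fix}(H_i), r(\mathrm{Fix}(H_i)))$. The invariant is accordingly weakened to $\Pr[\varphi(\mathrm{Fix}(H_i)) = r(\mathrm{Fix}(H_i))] \ge \gamma$ for each $i$. Since $|R_i| \le b$ forces $M \ge |V(G)|/b - 1$, the expected number of satisfied widespread requests is at least $\gamma M$, which is $\Omega\bigl((\gamma/b)\,|V(G)|\bigr)$, yielding the stated $(\varepsilon/b)$-weak flexibility. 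The principal obstacle throughout is the probabilistic bookkeeping in the inductive step: one must verify that, after averaging over the random boundary coloring from $\mathcal{D}_i$, the conditional marginals of $\varphi$ at vertices of $R_i$ remain bounded below by $\gamma$. This is exactly where (FORB) is essential, as it allows us to inject proper colorings that selectively avoid specific colors at specific boundary vertices, counterbalancing the bias introduced by conditioning on the random coloring of $v$'s neighbors.
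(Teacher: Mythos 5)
Your overall plan---reverse induction along the resolution, building a probability distribution on $L$-colorings of each $G_i$ and then reading off flexibility from marginals (linearity of expectation for the weighted case, one Fix vertex per $R_i$ for the weak case, which is exactly Lemma~\ref{lemma:distribWeak})---is the right skeleton, and indeed the paper does not reprove Lemma~\ref{lem:weighted} but cites it; its own proof of the analogous Lemma~\ref{lem:weak} follows this scheme. However, your induction hypothesis is too weak, and this is a genuine gap. You only carry the single-vertex marginal bound $\Pr[\varphi(v)=c]\ge\gamma$ (and in the weak case only at Fix vertices). In the inductive step, to get $\Pr[\varphi(v^*)=c^*]\ge\gamma$ for $v^*\in R_i$ you must lower-bound the probability, over $\psi\sim\mathcal{D}_i$, that \emph{none} of the already-colored outside neighbors of $v^*$ (a set $I$ of up to $k-2$ vertices of $G_i$, not of $R_i$) received $c^*$. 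This joint avoidance probability cannot be deduced from marginals: e.g.\ two neighbors $u,w$ can each satisfy $\Pr[\varphi(u)=c]=\Pr[\varphi(w)=c]=1/2$ while the distribution always colors exactly one of them $c$, so $\Pr[\varphi(u)\neq c \wedge \varphi(w)\neq c]=0$; then the conditioning event of your (FIX)-branch has probability $0$ and no choice of $\alpha$ rescues the bound. This is precisely why the actual proof (Lemma~13 of the cited paper, mirrored in the paper's proof of Lemma~\ref{lem:weak} by Properties~(i)--(iii)) maintains, as part of the induction, the extra property that for every color $c$ and every $\mathcal{F}$-free set $I$ in $G_i$ of size at most $k-2$, $\Pr[\varphi(v)\neq c \text{ for all } v\in I]\ge p^{|I|}$.

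Relatedly, your ``(FORB)-branch'' is left without a job: you say you complete $\psi$ on $R_i$ ``by invoking (FORB) with the appropriate $\mathcal{F}$-free set $I$,'' but since your invariant records nothing about sets, nothing constrains which set or which forbidden colors to use, and nothing is propagated forward. In the correct argument (FORB) is invoked exactly to preserve the set-avoidance property across the step: for a set $I$ meeting $R_i$, one forbids $c$ on $I\cap R_i$ via (FORB) while the inductive avoidance bound handles $I\cap V(G_i)$, and the randomization over the boundedly many extension scenarios (at most $k^{b}$-ish, whence $p=k^{-b}$-type constants) yields the quantitative bounds. So the fix is to strengthen your invariant to include the avoidance property for all $\mathcal{F}$-free sets of size at most $k-2$ (your observation that the relevant neighborhoods are $\mathcal{F}$-free and of size at most $k-2$ is correct and is what makes this sufficient), and to organize the extension of $\psi$ to $R_i$ so that both the marginal and the avoidance bounds are re-established; as written, the argument does not go through.
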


For the proof of Theorem \ref{main567weak} we prove a stronger version of Lemma~\ref{lem:weighted} tailored to the setting of weak flexibility.
For this, we define new ``enhanced'' versions of weak $(\mathcal{F},k)$-boundary-reducibility and of a weak $(\mathcal{F},k,b)$-resolution.
We will now require $\text{Fix}(H)$ to contain only vertices $v$ satisfying $\deg_G (v)-\deg_R (v)\le k-3$.
This change will allows us to consider smaller sets for the (FORB) condition.

\begin{definition}[enhanced weak $(\mathcal{F},k)$-boundary-reducibility]
\label{new_weak_reduce}
  A graph $H$ is  \emph{enhanced weakly $(\mathcal{F},k)$-boundary-reducible}  if there exist non-empty sets $\mathrm{Fix}(H)\subseteq R \subseteq V(H)$  such that
  \begin{itemize}
    \item[{\em (FIX)}] for every $v\in \mathrm{Fix}(H)$,  $\deg_G(v)-\deg_R(v)\le k-3$ and  $H[R]$ is $L$-colorable for every (($k-\deg_G+\deg_R)\downarrow$ $v$)-assignment $L$, and 
    \item[{\em (FORB)}] for every  $\mathcal{F}$-free set $I \subseteq R$ of size at most $k-3$, $H[R]$ is $L$-colorable for every ($k-\deg_G+\deg_R-1_I$)-assignment $L$.
  \end{itemize}

\end{definition}

Before proceeding further, observe that (FORB) in the enhanced version is easier to check because  $I$ is of  size at most $k-3$, instead of $k-2$ in the non-enhanced version. However, (FIX) in the enhanced version has an additional restriction on the degree of vertices in $\text{Fix}(H)$, which makes it more difficult to satisfy. 
Note that in general, the (FORB) condition on a single vertex $v$ implies $\deg_G-\deg_R\leq k-2$. 
However for vertices in $\text{Fix}(H)$, the (FIX) condition implies $\deg_G-\deg_R\leq k-3$.
In particular, a vertex of degree $k-2$ is no longer reducible under the enhanced definition.
We overcome this obstacle by allowing $(k-2)$-vertices in a resolution under special conditions.
Forbidding books $B_\ell$ helps with satisfying these special conditions. 
By doing this we can have both: a vertex of degree $k-2$ is reducible in our setting, and in addition we obtain subgraphs $H$ that are reducible under the enhanced weak $(\mathcal{F},k)$-boundary-reducibility definition, given that certain special circumstances occur. 
This rather technical improvement helps substantially in reducing the complexity of the analysis of the discharging process for the graph classes studied in this paper.
Note that further generalization of this idea may be possible, but for lack of use in this paper we will not aim to formulate this in the full generality.
\medskip

For a subgraph $H$ of a graph $G$, let $N_G(H)$ be the induced subgraph of $G$ on all neighbors of the vertices in $H$.

If $G$ is a graph satisfying the conditions of Definition~\ref{new_weak_reduce} and $I \subseteq R$ is an $\mathcal{F}$-free set of size $k-2$ so that $G[R]$ is $L$-colorable for every ($k-\deg_G+\deg_R-1_I$)-assignment $L$, then we call $I$ \emph{loose}.

Let $G$ be a graph, $H$ its subgraph and $v \in V(G-H)$.
We say that $v$ is \emph{$H$-tight} if
$\deg_G(v) = k-2$, $N_G(v) \subseteq V(H)$,
and $N_G(v)$ is not loose in $H$.

\begin{definition}[enhanced weak $(\mathcal{F},k,b,\beta)$-resolution]
  \label{def:new_resolution}
  Let $G$ be an  $\mathcal{F}$-free graph with lists of size $k$.
An \emph{enhanced weak $(\mathcal{F},k,b,\beta)$-resolution} of $G$ is a set $\{G_0, G_1, \dots, G_M\}$ of 
subgraphs of $G$, such that all the following three conditions hold: 
\begin{enumerate}
    \item For $1\le i \le M$, there exists a subgraph $H_i$ of $G_{i-1}$ satisfying that 
\begin{itemize}
  \item  $H_i$ is an induced enhanced weak $(\mathcal{F},k)$-boundary-reducible subgraph of $G_{i-1}$ with %
  reducible part $R_i$ such that $|R_i|\le b$,  or
  \item  $H_i$ is an induced weak $(\mathcal{F},k)$-boundary-reducible subgraph of $G_{i-1}$ with %
    reducible part $R_i$, such that $|R_i|\le b$ and for all $v\in Fix(H_i)$ either  $|N_{G_{i-1}}(v)\cap H_j|\le k-3$ or $N_{G_{i-1}}(v)\cap H_j$ is a loose set in $H_j$ for all $j>i$,  or
  \item $H_i$ is a single vertex with  $\deg_{G_{i-1}}(v)=k-2$.
\end{itemize} 
\item For every $1\le i \le M-1$,
  \[
  G_i\df G- \bigcup_{j=1}^{i} {R_j},
\]
$G_{M}$ is a weak $(\mathcal{F},k)$-boundary-reducible graph with empty boundary and order at most $b$, $G_{M +1}\df\emptyset$, and $H_{M+1}\df G_M$. 

\item The following is satisfied:
\begin{itemize}
  \item [{\em (TIGHT)}] For every  $1\le j\le M$, there are at most $\beta$ different $H_j$-tight vertices $v_i$, where $V(H_i)=\{v_i\}$ with $i < j$. \end{itemize}
\end{enumerate}

\end{definition}

Note that in Definition~\ref{def:new_resolution} $H_i$ can be $H_j$-tight only if $H_i$ is a single vertex with $\deg_{G_{i-1}}(v)=k-2$. 
A natural way to satisfy (TIGHT) condition is to show that whenever there is an $H_j$ such that more than $\beta$ subgraphs $H_{a_1},\ldots,H_{a_\beta},H_{a_{\beta+1}}$ are $H_j$-tight, then 
\[H_j\cup \bigcup_{i\in\{1,\ldots,\beta,\beta+1\}} H_{a_i}\in \mathcal{F}.
\]
If two adjacent vertices have many common neighbors, we get a book, which will be in $\mathcal{F}$.

We are now ready to state and proof our main lemma.

\begin{lemma}[Reducible configurations for weak flexibility]\label{lem:weak}
  For integers $k\ge 4$, $b\ge 1$, $\beta\ge 0$, and for a set $\mathcal{F}$ of forbidden subgraphs, 
  let $G$ be a $\mathcal{F}$-free graph with an enhanced weak $(\mathcal{F},k,b,\beta)$-resolution. 
 Then, there exists an $\varepsilon>0$ such that $G$  is weakly $\varepsilon$-flexible for lists of size $k$.
\end{lemma}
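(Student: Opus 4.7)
The plan is to adapt the probabilistic construction behind Lemma~\ref{lem:weighted} to the enhanced weak setting. We build, by reverse induction along the resolution $\{G_0, G_1, \ldots, G_M\}$, a probability distribution $\mathcal{D}$ on proper $L$-colorings of $G$ for which
\[
  \mathbb{E}_{\varphi \sim \mathcal{D}}\bigl[|\{v \in V(G) : \varphi(v) = r(v)\}|\bigr] \ge \varepsilon\,|V(G)|
\]
holds for some constant $\varepsilon = \varepsilon(k, b, \beta) > 0$; an averaging argument then supplies a coloring satisfying at least $\varepsilon |V(G)|$ requests. Concretely, we produce intermediate distributions $\mathcal{D}_M, \mathcal{D}_{M-1}, \ldots, \mathcal{D}_0$ on $L$-colorings of $G_M, G_{M-1}, \ldots, G_0$, respectively. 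The base $\mathcal{D}_M$ comes from the weak reducibility of $G_M$, and $\mathcal{D}_{i-1}$ is obtained from $\mathcal{D}_i$ by specifying a conditional distribution on $L$-colorings of $R_i$ given the chosen coloring of $G_i$.

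The extension rule at step $i$ is dictated by which of the three cases of Definition~\ref{def:new_resolution} governs $H_i$. In the first two cases, where $\mathrm{Fix}(H_i) \neq \emptyset$, we take a convex combination of a \emph{spreading} extension and a \emph{fixing} extension. The spreading part invokes (FORB): a random $\mathcal{F}$-free subset $I \subseteq R_i$ of the allowed size together with a random forbidden color at each vertex of $I$ is chosen, and (FORB) is applied to color $R_i$; averaging over these choices keeps the marginal probabilities at the new vertices away from $1$. The fixing part, used with a small constant probability $p$, picks some $v \in \mathrm{Fix}(H_i)$ and applies (FIX) to force $\varphi(v) = r(v)$, succeeding whenever no neighbor of $v$ in $G_i$ has already been colored $r(v)$. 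In the third case, $H_i = \{v\}$ with $\deg_{G_{i-1}}(v) = k-2$, the extension picks a color in $L(v)$ avoiding the (already chosen) colors of the neighbors of $v$, biased toward $r(v)$ whenever available.

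The central invariant, maintained at every $\mathcal{D}_i$, is an upper bound of the form $\Pr_{\varphi \sim \mathcal{D}_i}[\varphi(u) = c] \le 1 - \alpha$ for every $u \in V(G_i)$ and $c \in L(u)$, for some constant $\alpha = \alpha(k,b,\beta) > 0$. With this invariant in hand, the fixing step succeeds with probability at least $1 - (k-3)(1-\alpha) > 0$ in the enhanced case (because a FIX vertex has at most $k-3$ outside neighbors), and the request at a singleton $(k-2)$-vertex is satisfied with probability bounded below by a function of $\alpha$; summing these contributions across all resolution steps and using $|R_i| \le b$ yields the desired $\varepsilon$. Verifying that the invariant is preserved by each extension rule is where the enhanced (FORB) with $|I| \le k-3$ (rather than $k-2$) is crucial, as it provides enough averaging across admissible colorings of $R_i$ to keep marginals bounded away from $1$ even when the fixing extension occasionally biases them toward particular requested colors.

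I expect the main obstacle to be propagating the invariant through the singleton $(k-2)$-vertex extensions: once the $k-2$ neighbors of such a vertex are colored, only two colors remain in its list, so the extension is near-deterministic and can sharply concentrate marginals at the $H_j$ containing its neighborhood. This is precisely where the (TIGHT) condition enters the proof: it bounds by $\beta$ the number of singleton $(k-2)$-vertex steps that target the vertex set of any particular $H_j$, so the total distortion inflicted on marginals of vertices in $H_j$ from all singleton extensions depending on it is bounded by a function of $\beta$, and we can choose $\alpha$ small enough to absorb this accumulation. Once the invariant is confirmed case by case, the resulting $\varepsilon$ depends only on $k$, $b$, and $\beta$, completing the proof.
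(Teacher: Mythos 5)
There is a genuine gap, and it sits exactly where you placed your hopes on (TIGHT). The real obstruction posed by a singleton $(k-2)$-vertex $v$ whose whole neighborhood lies in one earlier-colored configuration $H_j$ and is not loose is that the distribution already constructed on $G_j$ may color $N(v)$ with $k-2$ distinct colors that \emph{always} include some particular color $c\in L(v)$; then $\Pr[\varphi(v)=c]=0$ no matter how you extend, so neither $v$'s own request nor the ``fixing'' guarantee you need for it can be salvaged. Your proposed mechanism --- that (TIGHT) bounds the ``total distortion inflicted on marginals of vertices in $H_j$'' by the singleton extensions --- does not address this: extending a distribution to new vertices does not change the marginals of already-colored vertices at all, and bounding the \emph{number} of tight singletons per configuration does nothing to make even a single tight vertex's blocked color available. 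You also cannot simply drop tight singletons from the accounting, because the resolution may consist almost entirely of such singleton steps, in which case the non-singleton configurations alone cannot certify a constant fraction of satisfied requests. The paper's proof uses (TIGHT) quite differently: it \emph{refactors} the resolution, absorbing every $H_j$-tight singleton into $H_j$ to form $H_j'$ of size at most $b+\beta$ (this is where $\beta$ enters), and then proves (Claim~\ref{cl:loose}) that no tightness survives the refactoring --- an absorbed vertex always retains a spare color after $H_j$ is colored, which makes the relevant neighborhoods loose --- so that the inductive extension machinery of Claim~\ref{cl:implicit} and Lemma~\ref{lemma:distribWeak} applies verbatim. Your proposal has no analogue of this merging/looseness step, and without it the induction breaks at the first tight singleton.

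A secondary problem is your invariant. Maintaining only an upper bound $\Pr[\varphi(u)=c]\le 1-\alpha$ and then union-bounding gives the fixing step success probability $1-(k-3)(1-\alpha)$, which is positive only if $1-\alpha<1/(k-3)$; but at a singleton $(k-2)$-vertex whose neighbors are forced to use $k-2$ distinct colors, some remaining color necessarily has marginal at least $1/2$, so for $k\ge 6$ the invariant you need cannot be maintained (and for $k=5$ you have not verified it). The paper avoids this by propagating the stronger, set-level invariants of \cite{choiclemen}: a lower bound $\varepsilon'$ on every color's marginal at Fix vertices together with lower bounds of the form $p^{|I|}$ on the probability that an entire $\mathcal{F}$-free set $I$ (of size at most $k-3$, or of size $k-2$ when loose) simultaneously avoids a given color. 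If you want to pursue your route, you would need to replace the marginal upper bound by joint avoidance bounds of this kind and add the refactoring argument; at that point you have essentially reconstructed the paper's proof.
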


The proof of the lemma is similar the proof of Lemma~\ref{lem:weighted}  in~\cite{choiclemen}. In particular, we explicitly formulate a few arguments in their proof as a separate claim (Claim \ref{claimforlemma} below) that we use in our proof.
We will also need the following Lemma~\ref{lemma:distribWeak}, which is Lemma 12 in \cite{choiclemen}.

Let $G$ be a graph with a weak $(\mathcal{F},k,b)$-resolution $\mathcal{R}$.
Let $\text{AllFix(G)}$ denote the union of all $\text{Fix}(H)$ over all reducible subgraphs $H$ in the resolution $\mathcal{R}$.

\begin{lemma}[Lemma 12 in \cite{choiclemen}]\label{lemma:distribWeak}
Let $b$ be an integer.
Let $G$ be a graph with list assignment $L$ of size $k$ on $V(G)$. %
Suppose $G$ has a weak $(\mathcal{F},k,b)$-resolution, $G$ is $L$-colorable, and there exists a probability distribution on the $L$-colorings $\varphi$ of $G$ such that for every $v\in \text{AllFix}(G)$ and $c\in L(v)$, $\mathbf{Prob}[\varphi(v)=c]\ge\varepsilon$.
Then $G$ with $L$ is weakly $\left(\varepsilon\cdot \frac{1}{b}\right)$-flexible.
\end{lemma}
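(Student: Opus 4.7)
The plan is to mirror the proof of Lemma~\ref{lem:weighted} from~\cite{choiclemen}: I will build a probability distribution $\Phi$ on $L$-colorings of $G$ by reverse induction along the resolution $\{G_0,\dots,G_M\}$, applying Claim~\ref{claimforlemma} (which isolates the per-step extension argument from~\cite{choiclemen}) at each step, and then invoke Lemma~\ref{lemma:distribWeak} to convert the resulting per-vertex color bound into weak flexibility. The novelty in the enhanced setting lies in handling the three types of $H_i$ permitted by an enhanced weak $(\mathcal{F},k,b,\beta)$-resolution, particularly single vertices of degree $k-2$, which fall outside the enhanced (FIX) condition.

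First, I would initialize with the trivial distribution on $G_{M+1}=\emptyset$ and process $H_{M+1}=G_M$ (weakly reducible with empty boundary) to obtain $\Phi_M$ on $L$-colorings of $G_M$. Then, for $i=M,M-1,\dots,1$, given $\Phi_i$ on $L$-colorings of $G_i$, extend to $\Phi_{i-1}$ on $L$-colorings of $G_{i-1}=G_i\cup R_i$ by applying Claim~\ref{claimforlemma} to $H_i$. The claim, as I envision it, asserts that given an ambient distribution on the boundary of an $(\mathcal{F},k)$-reducible subgraph $H$, one can randomly extend the coloring to $R$ so that each $v\in\mathrm{Fix}(H)$ attains each $c\in L(v)$ with probability at least $\gamma(k,b)>0$, by interpolating between (FIX) (targeting a color at $v$) and (FORB) (forbidding the target color at a small set of neighbors). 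I would verify the claim for each of the three types: (a) and (b) follow directly from (FIX) combined with (FORB), and for a type (c) single vertex $v$ of degree $k-2$ one colors $v$ uniformly from its at least two available colors.

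Second, to apply Lemma~\ref{lemma:distribWeak}, I would extend $\mathrm{AllFix}(G)$ to include type (c) singletons. Since $|R_i|\le b$ and every $R_i$ contributes at least one vertex to this extended set $\mathrm{AllFix}_{\mathrm{enh}}(G)$, we get $|\mathrm{AllFix}_{\mathrm{enh}}(G)|\ge|V(G)|/b$. I would then verify, for the distribution $\Phi$ built above, that every $v\in\mathrm{AllFix}_{\mathrm{enh}}(G)$ and $c\in L(v)$ satisfies $\Pr_\Phi[\varphi(v)=c]\ge\varepsilon'>0$. For $\mathrm{Fix}$ vertices of types (a) and (b), this follows at step $i$ from the claim. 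For a type (c) singleton $v$, after $G_i$ is colored we have $\Pr[\varphi(v)=c]\ge\tfrac{1}{k}\Pr[c\text{ is available at }v]$, and I bound the latter below by controlling, for each neighbor $u\in N_G(v)$, the probability $\Pr[\varphi(u)=c]$ away from $1$, using the TIGHT condition together with (FORB) applied at the $H_j$ that colors $u$. With this per-color bound on $\mathrm{AllFix}_{\mathrm{enh}}(G)$, the same expectation argument underpinning Lemma~\ref{lemma:distribWeak} yields $\mathbb{E}[\#\text{satisfied}]\ge\varepsilon'|\mathrm{AllFix}_{\mathrm{enh}}(G)|\ge(\varepsilon'/b)|V(G)|$ for any widespread request, so some realization of $\varphi$ satisfies at least $(\varepsilon'/b)|V(G)|$ requests, giving weak $(\varepsilon'/b)$-flexibility.

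The main obstacle is the type (c) case. Each singleton $v$ of degree $k-2$ has $k-2$ neighbors distributed among later-processed $H_j$'s, and each such $H_j$ must be persuaded, via a judicious choice of random extension, to leave $c$ available for $v$ with positive probability. The TIGHT condition caps at $\beta$ the number of singletons $v$ that are $H_j$-tight, so at most $\beta$ of the $(v,H_j)$ pairs require delicate handling, while the rest satisfy either $|N_{G_{i-1}}(v)\cap V(H_j)|\le k-3$ (admissible for the enhanced (FORB) with $|I|\le k-3$) or have a loose neighborhood in $H_j$ (admissible via the looseness clause in the second bullet of Definition~\ref{def:new_resolution}). Carefully orchestrating these forbid operations across the induction, while preserving the per-color bound for $\mathrm{Fix}$ vertices of earlier-processed types (a) and (b) subgraphs, is the core technical work that distinguishes Lemma~\ref{lem:weak} from Lemma~\ref{lem:weighted}.
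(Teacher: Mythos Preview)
Your proposal targets the wrong statement. The lemma in question (Lemma~\ref{lemma:distribWeak}) assumes the probability distribution already exists and merely converts the pointwise bound $\Pr[\varphi(v)=c]\ge\varepsilon$ on $\text{AllFix}(G)$ into weak $(\varepsilon/b)$-flexibility. Its proof is a two-line expectation argument: every reduced part $R_i$ has $|R_i|\le b$ and contributes at least one vertex to $\text{AllFix}(G)$, so $|\text{AllFix}(G)|\ge |V(G)|/b$; for a widespread request $r$, linearity of expectation gives $\mathbb{E}[\#\{v:\varphi(v)=r(v)\}]\ge \varepsilon\,|\text{AllFix}(G)|\ge (\varepsilon/b)|V(G)|$, and some realization attains this. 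Your write-up even \emph{invokes} Lemma~\ref{lemma:distribWeak} as a step, which would be circular.

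What you have actually sketched is a proof of Lemma~\ref{lem:weak}. Compared with the paper's proof of that lemma, your approach differs in one substantive way: you keep the type-(c) singletons $v$ of degree $k-2$ as separate steps of the resolution and attempt to bound $\Pr[\varphi(v)=c]$ directly, arguing via (FORB)/looseness at each $H_j$ containing a neighbor of $v$. The paper instead \emph{refactors} the resolution first: every $H_j$-tight singleton is merged into $H_j$ to form a new configuration $H_j'$ of size at most $b+\beta$, yielding an enhanced weak $(\mathcal{F},k,b+\beta,0)$-resolution $\mathcal{R}'$ with no tight vertices left (Claim~\ref{cl:loose} verifies this). One can then apply Claim~\ref{claimforlemma} uniformly, step by step, without a separate singleton analysis. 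The refactoring buys a cleaner induction; your direct approach would need to track, across multiple later steps, the joint event that all $k-2$ neighbors of $v$ avoid $c$, which is where the argument you left as ``core technical work'' becomes genuinely delicate.
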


\noindent
\begin{proof}[Proof of Lemma~\ref{lem:weak}]
For $1 \leq j \leq M+1$, let $\mathcal{H}_j$ be the set of all $H_j$-tight subgraphs where the (TIGHT) property applied.
Let $H_i \in \mathcal{H}_j$ for some $i$ and $j$. 
This means that $H_i$ is a single vertex with $k-2 \geq 2$ neighbors in $H_j$. Hence $\mathcal{H}_i = \emptyset$.

Now, we refactor the enhanced weak $(\mathcal{F},k,b,\beta)$-resolution $\mathcal R$ into an enhanced weak $(\mathcal{F},k,b+\beta,0)$-resolution $\mathcal{R'}$.
To do so, we attach all $H_j$-tight subgraphs to $H_j$ and thus we create a larger configuration $H'_j$. 
The vertices in tight subgraphs are not part of any Fix set.
Formally
\[H'_j\df 
\begin{cases}
\emptyset & \text{ if exists } i \text{ such that }  H_j \in \mathcal{H}_i \\ 
\displaystyle H_j\cup \bigcup_{H \in \mathcal{H}_j} H & \text{ otherwise}
\end{cases}
\]
and $\mathrm{Fix}(H_i') = \mathrm{Fix}(H_i)$ if $H_i' \neq \emptyset$ and $\mathrm{Fix}(H_i') = \emptyset$ otherwise.
Observe that by the (TIGHT) property, the size of the resulting $H'_j$ will be upper-bounded by $b+\beta$ and that $H'_j$ is enhanced weakly $(\mathcal{F},k)$-boundary-reducible or only weakly $(\mathcal{F},k)$-boundary-reducible (provided its neighbourhood is always a loose or small set) if it is not empty.
We simultaneously remember both $\mathcal R$ and $\mathcal R'$, since each time we are using $H'_j$ (or $G'_j$) we are referring to $\mathcal R'$ and each time we are using $H_j$ (or $G_j$) we are referring to $\mathcal R$.

The next step is to create a probability distribution on $L$-colorings $\varphi$ of $G_i$ for all $i$ starting with $G'_{M}$.
Let $p=k^{-{(b+\beta)}}$ and $\varepsilon'=p^{k-1}$.
We are going to show that each $i$ satisfies the following properties:
\begin{enumerate}[(i)]
  \item\label{p:1} for every $v\in \text{AllFix}(G'_i)$ and a color $c\in L(v)$, the probability that $\varphi(v)=c$ is at least $\varepsilon'$, and
  \item\label{p:2} for every color $c$ and every $\mathcal{F}$-free set $I$ in $G'_i$ of size at most $k-3$, the probability that $\varphi(v)\neq c$ for all $v\in I$ is at least $p^{|I|}$.
  \item\label{p:3} for every color $c$ and every loose $\mathcal{F}$-free set $I$ in $G'_i$ of size exactly $k-2$, the probability that $\varphi(v)\neq c$ for all $v\in I$ is at least $p^{|I|}$.
\end{enumerate}

Note that for $G'_{M+1}$ all of the properties trivially hold.
Note that Property~(\ref{p:1}) on $G_0'=G_0=G$ immediately implies that $G$ with $L$ is weakly $\left(\varepsilon'\cdot \frac{1}{b}\right)$-flexible by Lemma~\ref{lemma:distribWeak} and therefore weakly $\varepsilon$-flexible for $\epsilon={\epsilon'\over b}$.

\smallskip 
  
We will make use of the following claim proven implicitly in~\cite{choiclemen}.

\begin{claim}[Implicit in the proof Lemma~13 in~\cite{choiclemen})]\label{cl:implicit}\label{claimforlemma}
  Suppose that we have an enhanced weak $(\mathcal{F},k,b+\beta,0)$-resolution and a probability distribution on $L$-colorings of $G'_{i+1}$ satisfying Properties~(\ref{p:1}),~(\ref{p:2}), and~(\ref{p:3}) on $G'_{i+1}$. %
  If for each vertex $v\in Fix(H'_i)$ and for each $I=N(v)\cap H'_j$ where $j>i$ one of the following holds:
  \begin{itemize}
    \item[(a)] $|I|=k-2$ and $I$ is loose in $H'_j$, or
    \item[(b)] $|I|<k-2$
  \end{itemize}
  then there exists a probability distribution on $L$-colorings of $G'_i$ such that Properties~(\ref{p:1}),~(\ref{p:2}), and~(\ref{p:3}) are satisfied on $G'_{i}$.
\end{claim}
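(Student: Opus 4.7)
The plan is to prove this claim following the argument implicit in the proof of Lemma~13 of~\cite{choiclemen}. I construct the desired distribution on $L$-colorings of $G'_i$ from the given one on $G'_{i+1}$ by two independent layers of randomness: first, sample $\varphi$ from the distribution on $G'_{i+1}$, then, independently, sample a random ``forbidden set'' $F(w)\subseteq L(w)$ for each $w\in R'_i$ by including each color in $F(w)$ with probability $p=k^{-(b+\beta)}$ independently. Setting $\hat L(w)=L(w)\setminus\{\varphi(u):u\in N(w)\cap V(G'_{i+1})\}$, I extend $\varphi$ to $R'_i$ by a canonical (say, lexicographically smallest) proper $(\hat L\setminus F)$-coloring of $H'_i[R'_i]$ whenever one exists, and otherwise by any proper $\hat L$-coloring, which exists by (FORB) with $I=\emptyset$.

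The bulk of the proof is verifying Properties~(\ref{p:1})--(\ref{p:3}) on $G'_i$. For Property~(\ref{p:1}) at $v\in \mathrm{Fix}(H'_i)$ and color $c\in L(v)$, I would lower-bound $\Pr[\varphi(v)=c]$ by the probability of a ``good event'' on which the canonical extension is forced to assign $c$ to $v$. This event has three parts:
\begin{itemize}
\item[(a)] no neighbor of $v$ in $V(G'_{i+1})$ receives color $c$;
\item[(b)] the random forbidden set at $v$ satisfies $F(v)=L(v)\setminus\{c\}$;
\item[(c)] the random $F$ on $R'_i\setminus\{v\}$ is ``mild'' enough that (FIX) produces an $(\hat L\setminus F)$-coloring assigning $c$ to $v$.
\end{itemize}
The probability of~(a) is controlled using Properties~(\ref{p:2}) and~(\ref{p:3}) on $G'_{i+1}$, and this is exactly where the hypothesis of the claim enters: by assumption each $N(v)\cap V(H'_j)$ with $j>i$ is either $\mathcal{F}$-free of size at most $k-3$ (so Property~(\ref{p:2}) applies) or loose of size exactly $k-2$ in $H'_j$ (so Property~(\ref{p:3}) applies). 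The probability of~(b) is $p^{k-1}(1-p)$ by independence, and the probability of~(c) is a positive constant depending only on $k$, $b$, and $\beta$. Combining these yields a lower bound of the desired order $\varepsilon'=p^{k-1}$ after absorbing the constants into the definition of $p$.

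For Properties~(\ref{p:2}) and~(\ref{p:3}) on $G'_i$, I would split any admissible set $I\subseteq V(G'_i)$ as $I=I_{\mathrm{old}}\sqcup I_{\mathrm{new}}$ with $I_{\mathrm{old}}=I\cap V(G'_{i+1})$ and $I_{\mathrm{new}}=I\cap R'_i$. The inductive hypothesis on $G'_{i+1}$ gives the $p^{|I_{\mathrm{old}}|}$ bound for $I_{\mathrm{old}}$, and for $I_{\mathrm{new}}$ the event $\{c\in F(w)$ for every $w\in I_{\mathrm{new}}\}$ happens with probability $p^{|I_{\mathrm{new}}|}$ by independence and forces the canonical extension to avoid $c$ on $I_{\mathrm{new}}$. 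Multiplying (using independence of $F$ from $\varphi$) gives the required $p^{|I|}$ bound.

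The main obstacle is the mildness argument in part~(c) of Property~(\ref{p:1}): one must ensure that the random $F$ on $R'_i\setminus\{v\}$ does not prevent the canonical extension from fixing $v$ to $c$. The standard way is to condition on $F$ leaving the residual lists large enough on a controlled subset, which happens with positive probability since $|R'_i|\le b+\beta$ is bounded; on this event, a careful application of (FORB) to a suitable $\mathcal{F}$-free subset of size at most $k-3$ yields the extension. This is precisely the step where the engineering of the enhanced definitions and the per-piece hypothesis on $N(v)\cap V(H'_j)$ pays off, and it is what allows the argument of~\cite[proof of Lemma~13]{choiclemen} to carry over to the enhanced-weak setting essentially verbatim.
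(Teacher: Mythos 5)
First, note that the paper itself does not prove this claim: it is imported wholesale, attributed as ``implicit in the proof of Lemma~13 of~\cite{choiclemen}''. So your task was to reconstruct that argument, and your overall architecture (sample a coloring of $G'_{i+1}$, add fresh randomness that is independent of it, extend to the new reduced part via (FIX)/(FORB)) is indeed the right shape. However, your verification of Property~(\ref{p:1}) has a genuine gap at exactly the step the claim exists for. For $v\in \mathrm{Fix}(H'_i)$ you bound the probability that no already-colored neighbor of $v$ has color $c$ by invoking Properties~(\ref{p:2}) and~(\ref{p:3}) on $G'_{i+1}$ \emph{separately for each piece} $N(v)\cap V(H'_j)$. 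These are marginal bounds; they say nothing about the \emph{joint} event that $c$ is avoided on the union of the pieces when the neighbors of $v$ lie in two or more configurations $H'_j$ (given only marginals, the joint probability can even be zero). This multi-configuration situation is precisely the case the hypothesis (a)/(b) is designed to allow: e.g.\ a refactored single vertex of degree $k-2$ whose two neighbors sit in different $H'_j$'s, or the (D4)-type configurations, where the union has size $k-2$ and is neither of size at most $k-3$ (so (\ref{p:2}) does not apply to it) nor loose inside one configuration (so (\ref{p:3}) does not apply). In the enhanced case the union has size at most $k-3$ and (\ref{p:2}) applies directly, but then the per-piece hypothesis would be unnecessary. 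A correct proof must exploit that the distribution is built \emph{sequentially} with extension randomness independent of the past, so that at each step the extension avoids $c$ on any prescribed small or loose subset with probability at least $p$ \emph{conditionally on everything colored earlier}; these conditional bounds multiply across the different $H'_j$'s. Your induction, carrying only the marginal statements (\ref{p:1})--(\ref{p:3}), cannot close this step as written.

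There is a second, more technical defect in the same spirit. In your verification of (\ref{p:2})/(\ref{p:3}) you claim that the event $\{c\in F(w)\ \forall w\in I_{\mathrm{new}}\}$ forces the canonical extension to avoid $c$ on $I_{\mathrm{new}}$; it does not, because a proper $(\hat L\setminus F)$-coloring need not exist for an arbitrary Bernoulli-sampled $F$, and your fallback (an arbitrary proper $\hat L$-coloring) may then use $c$ on $I_{\mathrm{new}}$. To invoke (FORB) or looseness you must additionally condition on $F$ being (essentially) empty off $I_{\mathrm{new}}$, which multiplies the bound by factors of $(1-p)^{\Theta(k(b+\beta))}$; similarly your good event for (\ref{p:1}) costs $p^{k-1}$ times such factors on top of the avoidance probability. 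The resulting bounds fall strictly below the prescribed $p^{|I|}$ and $\varepsilon'=p^{k-1}$, and ``absorbing constants into $p$'' is not available, since $p$ is fixed in the enclosing proof and appears on both sides of the induction. The standard remedy (and what is implicit in~\cite{choiclemen}) is to use a uniformly random seed over at most $k^{|R'_i|}\le k^{b+\beta}$ outcomes and a deterministic extension rule for which a \emph{single} seed value forces the desired fix or avoidance; then each forcing event has probability at least $k^{-(b+\beta)}=p\ge p^{|I_{\mathrm{new}}|}$, the exponents match exactly, and the conditional (per-step) form of the avoidance bound needed for the first gap comes for free.
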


In order to use Claim~\ref{cl:implicit}, we need verify (a) and (b).
If $H_i'$ is not a single vertex $v$ with $\deg_{G_i}(b)=k-2$, then (a) or (b) hold by the definition of $H_i'$.
Hence we need to check the case of $H_i'$ being a single vertex $v$ with $\deg_{G_i}(v)=k-2$. We do it by showing $v$ is not $H_j'$ tight for any $j \geq i$ in the following claim. It implies that for $H_i'$, either (a) or (b) is satisfied. In particular, we will show that we got rid of all tight subgraphs when we refactored $\mathcal{R}$ into $\mathcal{R'}$.

\begin{claim}\label{cl:loose}
There are no $i < j$ such that $H_i'$ is $H_j'$-tight.
\end{claim}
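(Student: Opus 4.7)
The plan is to argue by contradiction. Suppose $H_i'$ is $H_j'$-tight for some $i<j$. Then by the definition of tightness, $H_i'$ is a single vertex $v$ with $\deg_G(v)=k-2$, $N_G(v)\subseteq V(H_j')$, and $N_G(v)$ not loose in $H_j'$. Because $H_i'$ is a single vertex, $H_i$ must itself be a single vertex and $\mathcal{H}_i=\emptyset$, so $H_i=\{v\}$ and $v$ was not absorbed into any $H_\ell'$ during the refactoring; in particular $v$ is not $H_\ell$-tight for any $\ell>i$. Moreover $\deg_G(v)=k-2$ rules out $H_i$ being of the first (enhanced-weak) type in Definition~\ref{def:new_resolution}, since a single-vertex configuration of that type would force $\deg_G(v)\le k-3$; so $H_i$ is of the second (weak with extra condition) or third (standalone $(k-2)$-vertex) type.

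Next I would sharpen the inclusion to $N_G(v)\subseteq V(H_j)$. If some $u\in N_G(v)$ lay in $V(H_j')\setminus V(H_j)$, then $u$ would be an $H_j$-tight single vertex absorbed into $H_j'$, and its tightness forces $N_G(u)\subseteq V(H_j)$; since $v\in N_G(u)$ this would give $v\in V(H_j)\subseteq V(G_{j-1})$, contradicting $v\in R_i$ with $i<j$. Once $\deg_G(v)=k-2$ and $N_G(v)\subseteq V(H_j)$ are established, the only way $v$ can fail to be $H_j$-tight is for $N_G(v)$ to be loose in $H_j$. If $H_i$ is of the third type, this is automatic from the non-absorption of $v$. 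If $H_i$ is of the second type, the condition appearing in that bullet of Definition~\ref{def:new_resolution}, applied to $v\in\mathrm{Fix}(H_i)$ and the subgraph $H_j$, gives the same conclusion, since $|N_G(v)\cap V(H_j)|=k-2>k-3$ forces the loose alternative. In either case, $N_G(v)\subseteq R_j$ and $N_G(v)$ is loose in $H_j$.

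The crucial final step is to upgrade ``loose in $H_j$'' to ``loose in $H_j'$'', which will directly contradict the assumed tightness on $v$. Given any $(k-\deg_G+\deg_{R_j'}-1_{N_G(v)})$-assignment $L$ on $R_j'$, I would first shrink each list $L(w)$ for $w\in R_j$ to a sublist of size $k-\deg_G(w)+\deg_{R_j}(w)-1_{N_G(v)}(w)$, which is valid because $\deg_{R_j'}(w)\ge\deg_{R_j}(w)$, and then apply the looseness of $N_G(v)$ in $H_j$ to properly color $G[R_j]$. Next I would extend to the absorbed tight vertices $u\in R_j'\setminus R_j$: these form an independent set because every neighbor of a tight vertex lies in $V(H_j)$ while the tight vertices themselves lie outside $V(H_j)$. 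Each such $u$ also lies outside $N_G(v)\subseteq R_j$, so $|L(u)|\ge 2+\deg_{R_j'}(u)$, while $u$ has only $\deg_{R_j'}(u)$ already-colored neighbors (all of which lie in $R_j$), leaving at least two available colors for a greedy extension. The main obstacle of the whole argument is precisely this last extension step: it requires verifying the independence of the absorbed vertices and doing careful list-size bookkeeping so that a greedy choice at each tight vertex is always available.
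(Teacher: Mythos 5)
Your argument reaches the paper's final contradiction (looseness of $N(v)$ in $H_j'$) by a different route, and the route contains a genuine gap at the step where you ``sharpen the inclusion to $N_G(v)\subseteq V(H_j)$''. You argue that if $u\in N(v)$ were an absorbed vertex of $H_j'$, then $u$'s $H_j$-tightness gives $N(u)\subseteq V(H_j)$, hence $v\in V(H_j)$, a contradiction. But the tightness of an absorbed vertex $u=H_m$ (with $m<j$) is a statement about $u$ at the moment it was removed: $u$ is a single vertex of degree $k-2$ \emph{in $G_{m-1}$}, and its relevant neighborhood is $N_{G_{m-1}}(u)$. If $i<m$, the vertex $v$ has already been deleted from $G_{m-1}$, so the edge $uv$ (which exists in $G$ and in $G_{i-1}$, and hence counts toward $v$'s $k-2$ neighbors and toward $N(v)\cap H_j'$) is invisible to $u$'s tightness condition. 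So absorbed vertices with original index strictly between $i$ and $j$ may perfectly well be neighbors of $v$; your symmetry argument only rules out absorbed vertices with index below $i$. Reading the tightness condition with neighborhoods taken in the original graph $G$, as you do, would also break the reduction preceding the claim: ``$v$ is not $H_j'$-tight'' would then no longer imply conditions (a)/(b) of Claim~\ref{cl:implicit}, since those concern $N_{G_{i-1}}(v)\cap H_j'$, not $N_G(v)$.

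Because of this, your step~3 (``$N(v)$ must be loose in $H_j$'') does not go through in the case that some of $v$'s neighbors lie among the absorbed vertices: then $N(v)\cap V(H_j)$ has size at most $k-3$ and one cannot speak of looseness of the full neighborhood inside $H_j$. This is exactly the case the paper's proof is built around: it uses (FORB) of $H_j$ on the small set $N(v)\cap R_j$ and then extends greedily through the absorbed vertices $W$ (each of which has its $k-2$ neighbors in $H_j$, and which, as you correctly note, form an independent set), concluding that $N(v)$ is loose in $H_j'$ after all. Your step~4 (upgrading ``loose in $H_j$'' to ``loose in $H_j'$'' by coloring $R_j$ first and extending greedily to $W$) is sound and is essentially the same greedy bookkeeping the paper uses; it even covers a subcase the paper's wording glosses over (all of $N(v)$ inside $H_j$ and loose there). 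But it cannot carry the whole proof, because the premise $N(v)\subseteq V(H_j)$ is unjustified. To repair your argument, drop step~2, split according to whether $N(v)$ meets $W$, and in the nontrivial case run your step~4 extension starting from a (FORB)-coloring of $R_j$ with lists reduced on the at-most-$(k-3)$-element set $N(v)\cap R_j$.
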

\begin{proof}
Suppose for contradiction that $H_i'$ is $H_j'$-tight for some $i < j$.
By the definition, $H_i'$ is one vertex $v$ with degree $k-2$ in $G_{i-1}$.
By the definition of $\mathcal{R}'$, $v$ is not $H_\ell$-tight for any $\ell > i$.
In particular, $v$ is not $H_j$-tight.
Since $v$ is $H_j'$-tight, $\mathcal{H}_j$ is not empty. 
Hence $H_j'$ is a union of $H_j$ and vertices $W$, where every $w\in W$ has $k-2$ neighbors in $H_j$.
Since $v$ is not $H_j$-tight, it has at most $k-3$ neighbors in $H_j$ and at least one in $W$.
Notice that every vertex $w$ in $W$ has $k-2$ neighbors in $\mathcal{H}_j$ hence a list of $k-1$ colors suffices for extending any coloring of $\mathcal{H}_j$ to $w$ greedily.
This and the (FORB) property for $H_j$ imply that $v$ is not $H_j'$-tight because $N(v)$ in $H_j'$ is loose, which is a contradiction.
\end{proof}

We conclude that Claim~\ref{cl:loose} enables us to use Claim~\ref{cl:implicit} directly on $\mathcal{R'}$.
This finishes the proof of Lemma~\ref{lem:weak}.
\end{proof}
\medskip

For a positive integer $d$, a {\it $d$-vertex}, a {\it $d^+$-vertex}, and a {\it $d^-$-vertex} are a vertex of degree $d$, at least $d$, and at most $d$, respectively. 
A {\it $d$-face}, a {\it $d^+$-face}, and a {\it $d^-$-face} are defined analogously. 
A {\it $(d_1, d_2, d_3)$-face} is a $3$-face where the degrees of the vertices on the face are $d_1, d_2, d_3$.  
We will sometimes call 3-faces {\em triangles}. 
A \emph{diamond} $D$ is a graph isomorphic to $K_4$ minus an edge. 
The $2$-vertices of $D$ are be called the {\em side vertices}, and the  $3$-vertices will be called the {\em middle vertices} of $D$. 
For a vertex $v$, denote by $d(v)$ the degree of $v$.
Let $G$ be a graph. By $T(a,b,c)$ we denote a triangle in $G$ vertices of degree $a$, $b$, and $c$ in $G$, and by $Dia(a-b,c,d)$ a diamond in $G$ with middle vertices of degrees $a$ and $b$ and side vertices of degrees $c$ and $d$.

\begin{lemma}\label{deglem}
Let $G$ be a plane $\{C_5, C_6, C_7\}$-free graph. Suppose that $v$ is the middle vertex of $k$ distinct diamonds, and $v$ is adjacent to $m$ faces of size 3 or 4 that are not part of a diamond in which $v$ is a middle vertex. Then $d(v) \ge 3k + 2m$ and $k\le  \lfloor \frac{d(v)}{3}\rfloor$. 
\end{lemma}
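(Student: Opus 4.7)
The plan is a local edge-counting argument in the planar embedding at $v$. Order the $d(v)$ edges at $v$ cyclically; between consecutive edges sits a face-corner, giving $d(v)$ face-corners in total. Each diamond $D$ in which $v$ is a middle vertex contributes three consecutive edges $vx$, $vv'$, $vy$ at $v$ (with the middle edge $vv'$ sandwiched between the two side edges) together with two consecutive triangular face-corners $vxv'$ and $vv'y$ at $v$. Each $3$- or $4$-face at $v$ not arising as a triangular face of such a diamond occupies one face-corner bounded by two consecutive edges at $v$. The proof then reduces to showing that the $3k$ edges contributed by the $k$ diamonds and the $2m$ edges bounding the $m$ extra faces are all pairwise distinct at $v$.

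The first key step is that distinct diamonds at $v$ are edge-disjoint at $v$. Suppose $D_1=(v,v'_1,x_1,y_1)$ and $D_2=(v,v'_2,x_2,y_2)$ share an edge $vu$ at $v$. If $u=v'_1=v'_2$, the two faces bordering $vv'_1$ are pinned down by the planar embedding and the two side pairs must coincide, forcing $D_1=D_2$. If $u$ is a common side and $v'_1\ne v'_2$, then $v'_1,y_1,v,y_2,v'_2,u$ forms a $C_6$. If $u=x_1$ is a side of $D_1$ while being the other middle of $D_2$, the cyclic order at $v$ around $vu$ forces one of $x_2,y_2$ to equal $v'_1$, producing a $C_5$. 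Each case contradicts $\{C_5,C_6,C_7\}$-freeness.

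An analogous short-cycle analysis handles the extra faces. A shared edge $vu$ between an extra face $f$ and a diamond $D=(v,v',x,y)$ cannot be $vv'$, since both sides of $vv'$ are diamond triangles, so $u$ is a side, say $u=x$; tracing $f$ against the opposite side $y$ closes a $C_5$ when $f$ is a triangle and a $C_6$ when $f$ is a $4$-face. For two distinct extra $3$- or $4$-faces $f_1,f_2$ sharing an edge $vu$: the two-triangles subcase builds a diamond at $v$ whose triangular faces are exactly $f_1,f_2$, contradicting that these faces are extra; the triangle/$4$-face subcase yields a $C_5$; the two-$4$-faces subcase yields a $C_6$ through the opposite corners of the two faces. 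Combining the two claims gives $d(v)\ge 3k+2m$, whence $k\le\lfloor d(v)/3\rfloor$ since $k$ is an integer.

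The main obstacle is the short-cycle case analysis above: there are several ways two diamonds, or a diamond and an extra face, or two extra faces, can overlap at $v$, and each sub-case needs a separate forbidden-cycle witness. The delicate point is tracking coincidences of vertices which are a priori distinct (for example, a side of one diamond equalling the other middle of a second, or boundary vertices of two extra faces coinciding); such degeneracies can shorten a would-be $C_6$ into a $C_4$ or a $C_5$ into a triangle, so in each sub-case a fresh short cycle must be located to rule it out using $\{C_5,C_6,C_7\}$-freeness.
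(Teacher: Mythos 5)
Your proof is correct and takes essentially the same route as the paper's: the paper argues contrapositively that if $d(v)<3k+2m$ then two (in fact three) faces of size at most $4$ meet consecutively around $v$ and their union yields a forbidden $C_5$, $C_6$ or $C_7$, which is exactly your pairwise edge-disjointness-at-$v$ argument carried out case by case. Your write-up is simply a more explicit rendering of that one idea (you even separate out the two-extra-faces overlap, including the two-triangles subcase excluded by the definition of $m$, which the paper's ``three faces'' phrasing glosses over), and the vertex-coincidence degeneracies you flag but do not fully discharge are left equally implicit in the paper's two-line proof.
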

\begin{proof}
If not, then $v$ is adjacent to three faces $f, g, h$, each of them of size at most 4, such that $f,g$ share an edge and $g,h$ share an edge. But this induces a cycle $C_i$ with  $5\le i \le 7$, a contradiction.
\end{proof}
\medskip

In all the figures in the paper, black vertices have all their incident edges drawn, whereas a white vertex may have more edges incident than drawn (since white vertices are in the boundary).

\section{Proof of Theorem \ref{main567}}

\subsection{Reducible Configurations}

Let $\mathcal{F}=\{K_4,C_5,C_6,C_7,B_5\}$.
In this section we will provide a handful of $(\mathcal{F},4)$-boundary-reducible configurations. 

\begin{lemma}\label{reduciblemain567}
The following configurations are  $(\mathcal{F},4)$-boundary-reducible. See Figure~\ref{fig:Cconfs} for reference. 
If boundary is not mentioned, it is empty.
\begin{itemize}
    \item[{\em (C1)}] A vertex of degree at most 2.
    \item[{\em (C2)}] Three $3$-vertices  appearing on a path of length 2. 
    \item[{\em (C3)}] The triangle $T(3,3,3)$. 
    \item[{\em (C4)}] Let $u$ be a $3$-vertex adjacent to the middle $4$-vertex of the diamond $D = Dia(4-3,4,5^+)$. Let $v$ denote the $5^+$-vertex that is a side vertex of $D$. 
    Then $D \cup \{u\}$ is reducible with boundary $v$.
    \item[{\em (C5)}] $Dia(3 - 3, 5^+,5^+)$ with $5^+$-vertices in the boundary.
    \item[{\em (C6)}] $Dia(3 - 5^+,3,5^+)$ with $5^+$-vertices in the boundary.
    \item[{\em (C7)}] The diamond $Dia(5-4,3,3)$. 
    \item[{\em (C8)}] Let $D_1 = Dia(4-4,5,3)$ and $D_2 = (5-3,4,4^+)$ be two diamonds sharing the same $5$-vertex. Let $v$ denote the $4^+$-vertex that is a side vertex of $D_2$. Then the subgraph $D_1 \cup D_2$ is reducible with $v$ in the boundary. 
    \item[{\em (C9)}] Let $D_1 = D_2 = Dia(3-4,4,5^+)$ be two diamonds whose middle $4$-vertices are connected by an edge. Let $v_1$ and $v_2$ denote the two $5^+$-vertices that are the side vertices of $D_1$ and $D_2$. Then the subgraph $D_1 \cup D_2$ is reducible with $v_1$ and $v_2$ in the boundary. 
    \item[{\em (C10)}] Let $D_1 = Dia(4-3,5^+,5)$ and $D_2 = Dia(5-3,4,4^+)$ be two diamonds sharing a middle $5$-vertex. Let $v_1$ denote the $5^+$-vertex that is a side vertex of $D_1$ and let $v_2$ denote the $4^+$-vertex that is a side vertex of $D_2$. Then the subgraph $D_1 \cup D_2$ is reducible with $v_1$ and $v_2$ in the boundary.
    \item[{\em (C11)}] A diamond $Dia(4-4,3,4)$ along with a $3$-vertex adjacent to one of the middle $4$-vertices. 
    \item[{\em (C12)}] Let $D_1 = Dia(3-4,4,5^+)$ and $D_2 = Dia(4-4,4,3)$ be two diamonds whose middle $4$-vertices are connected by an edge. Let $v$ denote the $5^+$-vertex that is a side vertex of $D_1$. Then the subgraph $D_1 \cup D_2$ is reducible is reducible with $v$ in the boundary.
    \item[{\em (C13)}] Let $D_1 = Dia(3-5,5,5)$ and $D_2,D_3 = Dia(5-3,4,5+)$ be two diamonds where the two side $5$-vertices of $D_1$ are middle vertices of $D_2$ and $D_3$. Let $v_1$ and $v_2$ denote the two side $5^+$-vertices of $D_2$ and $D_3$, respectively. Then $D_1 \cup D_2 \cup D_3$ is reducible with $v_1$ and $v_2$ in the boundary. 
\end{itemize}
\end{lemma}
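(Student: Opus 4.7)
My plan is to verify each of the configurations (C1)–(C13) separately against the two conditions (FIX) and (FORB) of Definition~\ref{reduce}, with $k=4$ and $\mathcal{F}=\{K_4,C_5,C_6,C_7,B_5\}$. For each configuration $H$ I first identify the reduced part $R$ (the non-boundary vertices), then compute $\deg_G(v)$ and $\deg_R(v)$ for every $v\in R$, and tabulate the effective list sizes $f(v)\df 4-\deg_G(v)+\deg_R(v)$. For instance, in (C4) the reduced part consists of the two middle vertices of $D$, the side $4$-vertex of $D$, and the pendant $3$-vertex $u$; the effective list sizes are $3,3,2,2$ and $H[R]$ is a triangle with a pendant. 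Analogous bookkeeping handles every other configuration, and its output is a small graph with a prescribed list-size vector that is independent of the rest of $G$.

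For (FIX) I want to show that after precoloring any single $v\in R$ (replacing $f(v)$ by $1$) the remainder of $R$ is still $L$-colorable. The strategy is a short case analysis: since $H[R]$ is small, typically a triangle, a diamond, or a small augmentation of a diamond, colorability follows from elementary list-coloring facts such as ``$K_3$ with lists $3,3,2$ is colorable'' and ``$K_4-e$ with lists $3,3,2,2$ is colorable''. The small configurations (C1)–(C3) are essentially immediate, the single-diamond configurations (C4)–(C7) reduce to one such problem, and the multi-diamond configurations (C8)–(C13) can be colored one diamond at a time, propagating the precolored information through the shared middle vertex.

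For (FORB) I need to control every $\mathcal{F}$-free set $I\subseteq R$ of size at most $k-2=2$. The key structural observation is that adding a vertex $u$ with at most two neighbors cannot complete a $K_4$ or a $B_5$, so the $\mathcal{F}$-free restriction on a pair $I=\{x,y\}$ reduces to the absence of paths of length $3$, $4$, or $5$ between $x$ and $y$ in $H$ (which would close up to $C_5,C_6$, or $C_7$ through $u$). Together with the global assumption that $G$ is $\{C_5,C_6,C_7\}$-free, this lets me enumerate the admissible $I$'s in each configuration and verify colorability after the corresponding list reduction by the same small-graph list-coloring arguments used for (FIX).

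The principal obstacle is the bookkeeping in the multi-diamond configurations (C8)–(C13), where $R$ has six or more vertices, several with effective list size $2$, and the admissible pairs $I$ can straddle the two diamonds. In these cases I would organize the verification by first listing the candidate pairs, then grouping them by the symmetries of the configuration, and finally presenting a uniform coloring template (typically: color the small-list vertices first and keep a spare color at each shared middle vertex). Lemma~\ref{deglem} plays a supporting role throughout by bounding how many diamonds and triangles can meet at a vertex, which keeps the list of potential $I$'s finite and short. Once the bookkeeping is set up, each configuration's verification reduces to a handful of elementary list-coloring checks.
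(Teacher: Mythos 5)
Your overall strategy matches the paper's: the paper also verifies (FIX) and (FORB) configuration by configuration using the effective list sizes $4-\deg_G+\deg_R$ and greedy colorings, except that it delegates the tedious case enumeration to a short computer program and only writes out (C2) and one diamond configuration by hand. So the plan itself is sound, and your bookkeeping example for (C4) is correct.

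There is, however, a genuine gap in the structural observation you use to enumerate the admissible sets $I$ for (FORB). You claim that a new vertex adjacent to only two vertices of $H$ cannot complete a $B_5$, so that $\mathcal{F}$-freeness of a pair $\{x,y\}$ reduces to the absence of $x$--$y$ paths of length $3$, $4$, or $5$ in $H$. This is false, and the failure occurs exactly where the $B_5$ exclusion is indispensable: if $x$ and $y$ are adjacent and already have two common neighbors in $H$ (i.e., they are the two middle vertices of a diamond), then adding a vertex adjacent to both creates three triangles on the edge $xy$, which is a $B_5$. Consider (C5), where the reduced part $R$ consists of the two adjacent middle $3$-vertices, each with effective list size $2$. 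Under your criterion the pair of middle vertices would count as $\mathcal{F}$-free (there is no $x$--$y$ path of length $3$, $4$, or $5$ in the diamond), and then (FORB) would require coloring an edge whose endpoints each retain a single, possibly identical, color --- which is impossible. The configuration is reducible only because this pair is \emph{not} $\mathcal{F}$-free, precisely on account of $B_5\in\mathcal{F}$; this is also why $B_5$ is forbidden in Theorem~\ref{main567} in the first place. The same issue arises for the middle pair of any diamond in (C4), (C7)--(C13), although in several of those cases the extra check would happen to pass; in (C5) it does not, so your enumeration rule must be repaired by additionally excluding adjacent pairs with two common neighbors in $H$ (and, more generally, any pair whose common-neighbor closure yields a book). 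With that correction the rest of your case analysis goes through along the same lines as the paper's verification.
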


We wish to point out that the reducible configurations are meant to be induced subgraphs by definition, and we will use them as such in the discharging part of the proof. 
The only configuration, where two external edges can be identified is (C2) and it gives (C3), which we explicitly list. 
It can be straightforwardly checked that no identification of vertices in (C1)--(C15) is possible since otherwise, it creates a forbidden subgraph.
\medskip

\begin{proof}[Proof of Lemma~\ref{reduciblemain567}]
  It is straightforward to check that each configuration (C1)--(C15) satisfies the (FIX) and (FORB) conditions in Definition~\ref{reduce}. However, checking all the cases is rather tedious. Hence we developed a simple computer program that does it, see \oururl\footnote{This program is also available as a part of the sources in our arXiv submission.}. In particular, a greedy coloring works in all cases. For an interested reader who wishes to check some cases by hand, we added list sizes to Figure~\ref{fig:Cconfs}. We also provide here proofs showing that (C2) and (C5) are $(\mathcal{F},4)$-boundary-reducible. Together, these two configurations demonstrate how to prove that the remaining configurations are reducible. 

The two reducible configurations $H_1$ and $H_2$ corresponding to (C2) and (C5), respectively are depicted in Figure~\ref{config:example:C2:C5}. The reduced parts $R_1 = H_1$ and $R_2 \subset H_2$ are provided as well. Finally, we have labeled each vertex in the figure with the value of the function $4 - \text{deg}_{H_i} + \text{deg}_{R_i}$ for $i \in \{1,2\}$. 
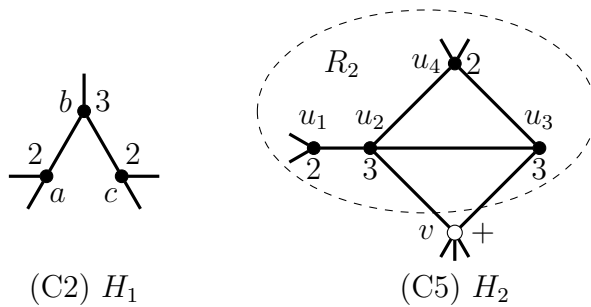
\begin{figure}[H]
    \centering
    \begin{tikzpicture}
    \node[vtx] (w) at (1,0) {};
    \path (w) ++(60:1) node (u)[vtx] {};
    \path (w) ++(0:1) node (v)[vtx] {};

    \draw[gedge] (u) -- (v);
    \draw[gedge] (w) -- (u);

    \draw[gedge] (u) -- ++(90:0.5);
    \draw[gedge] (v) -- ++(300:0.5);
    \draw[gedge] (w) -- ++(240:0.5);
    \draw[gedge] (w) -- ++(180:0.5);
    \draw[gedge] (v) -- ++(0:0.5);

    \node at (1.5,-1.5) {(C2) $H_1$};

    \node at (0.85,0.3) {$2$};
    \node at (2.15,0.3) {$2$};
    \node at (1.75,1) {$3$};
    \node at (1.15,-0.25) {$a$};
    \node at (1.85,-0.25) {$c$};
    \node at (1.25,1) {$b$};
\end{tikzpicture} \hspace{1 cm}
\begin{tikzpicture}[scale=0.75]
\baseConfigurationdiamond{}{}{}{}

\node at (-2,1.5) {$R_2$};

\node[vtx] (z) at (-2.5,0) {};

\draw[gedge] (u) -- (z);

\draw[gedge] (z)--++(150:0.5);
\draw[gedge] (z)--++(210:0.5);

\draw[gedge] (x)--++(120:0.5);
\draw[gedge] (x)--++(60:0.5);

\draw[gedge] (y)--++(270:0.5);
\draw[gedge] (y)--++(300:0.5);
\draw[gedge] (y)--++(240:0.5);

\draw[dashed] (-0.5,0.65) ellipse[x radius = -3 cm, y radius = 1.8cm];

\node[bndry] at (y) {};
\node at (0.5,-1.5) {$+$}; 
\node at (-2.5,0.5) {$u_1$};
\node at (-0.5,-1.5) {$v$};
\node at (-1.5,0.5) {$u_2$};
\node at (1.5,0.5) {$u_3$};
\node at (-0.5,1.5) {$u_4$};
\draw (z) node[below]{2};
\draw (x) node[right]{2};
\draw (u) node[below]{3};
\draw (v) node[below]{3};
\node at (0,-2.5) {(C5) $H_2$};
\end{tikzpicture}
    \caption{Reducible configurations (C2) and (C5). The reduced parts consist of the black vertices. }
    \label{config:example:C2:C5}
\end{figure}
By definition, checking the (FIX) condition for any subgraph $H$ with reducible part $R$ is equivalent to showing that for each $v\in V(R)$, $R$ can be properly colored after assigning each vertex a list of size $((4-\text{deg}_H+\text{deg}_R)\downarrow v)$. It is clear by inspection that this is the case for $\text{(C2)} = H_1 = R_1$, and hence we only need to check the (FORB) condition for (C2). Since (FIX) is already verified, it implies (FORB) for subsets of size one in $R$.
It remains to verify (FORB) for subsets of size two in $R$.

If we apply (FORB) to $a$ and $c$, then both $a$ and $b$ will be left with one available color in their lists. Vertex $b$ still has three colors in it's list.
Therefore, we can greedily color $a$, $c$, and $b$ in this order to obtain a proper coloring for (C2).
If we apply (FORB) to $a$ and $b$, then the color for $a$ will be fixed, and each of $b$ and $c$ will be left with two possible colors. Therefore, we can greedily color $a$, $b$, and $c$ in this order to obtain a proper coloring for (C2). By symmetry, the case of applying (FORB) to $b$ and $c$ is also verified, implying that (C2) is reducible. 

Let $H_2$ be a subgraph of $G$ isomorphic to (C5). Let $R_2 \subset H_2$ denote the reducible part of (C5), i.e. the subgraph of $H_2$ induced by vertices $u_1,\dots,u_4$.
For each $i = 1,\dots,4$, we will check the (FIX) condition for $u_i$. Let $L_i$ be an arbitrary list assignment where each vertex in $R$ is assigned a list of size $((4-\text{deg}_H+\text{deg}_R)\downarrow u_i)$. We will now show that $R$ can be properly colored.
In each case we list the order of vertices in greedy coloring.
\begin{itemize}
    \item $L_1:$ $u_1,u_2,u_4,u_3$.
    \item $L_2:$ $u_2,u_1,u_4,u_3$. 
    \item $L_3:$ $u_3,u_4,u_2,u_1$. 
    \item $L_4:$ $u_4,u_3,u_2,u_1$. 
\end{itemize}

Next we need to verify that $H$ satisfies the (FORB) condition.
However, only one subset of $R$ of size two is $\mathcal{F}$-free: $\{u_1,u_2\}$. 
In that case $R$ can be colored greedily in the following order $u_1,u_2,u_4,u_3$.
Thus, (C5) is a reducible configuration. 
\end{proof}

\newpage

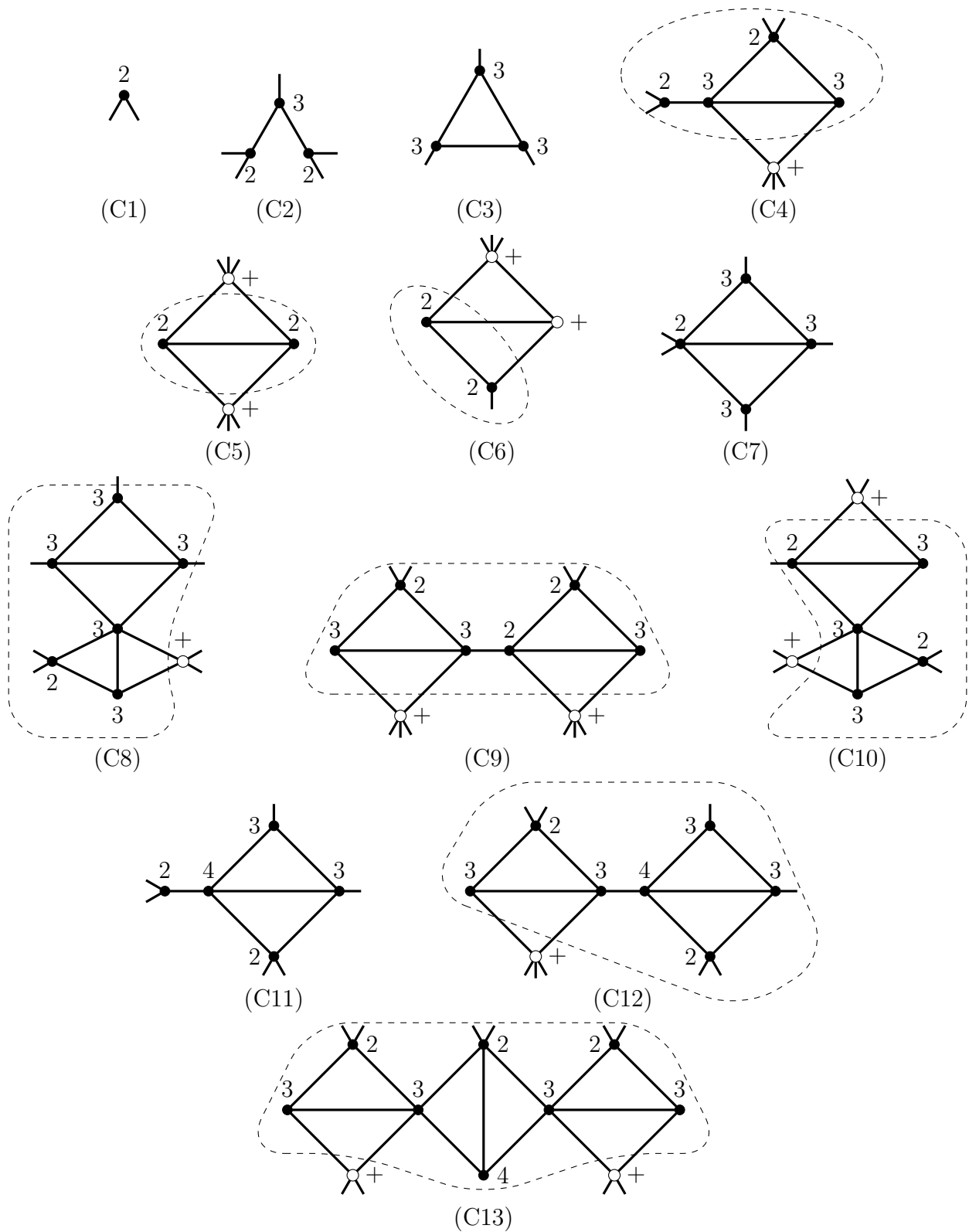
\begin{figure}[H]
    \centering
\begin{tikzpicture}
\node[vtx,label=$2$] (v) at (0,1) {};
\draw[gedge] (v) -- ++(240:0.5) {};
\draw[gedge] (v) -- ++(300:0.5) {};

\node at (0,-1) {(C1)};
\end{tikzpicture}
\hspace{ 2 em } 
\begin{tikzpicture}
\node[vtx,label=below:$2$] (w) at (1,0) {};
\path (w) ++(60:1) node (u)[vtx,label=right:$3$] {};
\path (w) ++(0:1) node (v)[vtx,label=below:$2$] {};

\draw[gedge] (u) -- (v);
\draw[gedge] (w) -- (u);

\draw[gedge] (u) -- ++(90:0.5);
\draw[gedge] (v) -- ++(300:0.5);
\draw[gedge] (w) -- ++(240:0.5);
\draw[gedge] (w) -- ++(180:0.5);
\draw[gedge] (v) -- ++(0:0.5); 

\node at (1.5,-1) {(C2)};
\end{tikzpicture}
\hspace{ 2 em }
\begin{tikzpicture}[scale=0.75]
\baseConfigurationtriangle{$3$}{$3$}{$3$}

\draw[gedge] (u) -- ++(90:0.5);

\draw[gedge] (v) -- ++(-60:0.5);

\draw[gedge] (w) -- ++(240:0.5);
\node at (2,-1.5) {(C3)};
\end{tikzpicture} \hspace{ 2 em} 
\begin{tikzpicture}[scale=0.75]
\baseConfigurationdiamond{$3$}{$3$}{$2$}{}

\node[vtx,label=above:$2$] (z) at (-2.5,0) {};

\draw[gedge] (u) -- (z);

\draw[gedge] (z)--++(150:0.5);
\draw[gedge] (z)--++(210:0.5);

\draw[gedge] (x)--++(120:0.5);
\draw[gedge] (x)--++(60:0.5);

\draw[gedge] (y)--++(270:0.5);
\draw[gedge] (y)--++(300:0.5);
\draw[gedge] (y)--++(240:0.5);

\draw[dashed] (-0.5,0.65) ellipse[x radius = -3 cm, y radius = 1.5cm];

\node[bndry] at (y) {};
\node at (0.5,-1.5) {$+$}; 
\node at (0,-2.5) {(C4)}; 
\end{tikzpicture}
\hspace{2 em}
\begin{tikzpicture}[scale = 0.75]
\baseConfigurationdiamond{$2$}{$2$}{}{}
\draw[gedge] (x)--++(120:0.5);
\draw[gedge] (x)--++(60:0.5);
\draw[gedge] (x)--++(90:0.5);

\draw[gedge] (y)--++(270:0.5);
\draw[gedge] (y)--++(300:0.5);
\draw[gedge] (y)--++(240:0.5);

\draw[dashed] (0,0) ellipse[x radius = 2 cm, y radius = 1.15cm];
\node at (0.5,-1.5) {$+$}; 
\node at (0.5,1.5) {$+$}; 
\node[bndry] at (y) {};
\node[bndry] at (x) {};
\node at (0,-2.5) {(C5)};
\end{tikzpicture}
\hspace{2 em}
\begin{tikzpicture}[scale = 0.75]
\baseConfigurationdiamond{$2$}{}{}{$2$}
\draw[gedge] (x)--++(120:0.5);
\draw[gedge] (x)--++(60:0.5);
\draw[gedge] (x)--++(90:0.5);

\draw[gedge] (y)--++(270:0.5);

\draw[dashed] (-0.75,-0.75) ellipse[x radius = 2 cm, y radius = 1 cm, rotate = 135];
\node at (0.5,1.5) {$+$}; 
\node at (2,0) {$+$}; 
\node[bndry] at (x) {};
\node[bndry] at (v) {};
\node at (0,-3) {(C6)};
\end{tikzpicture}
\hspace{2 em}
\begin{tikzpicture}[scale = 0.75]
\baseConfigurationdiamond{$2$}{$3$}{$3$}{$3$}
\draw[gedge] (y)--++(270:0.5);
\draw[gedge] (x)--++(90:0.5);
\draw[gedge] (u)--++(150:0.5);
\draw[gedge] (u)--++(210:0.5);
\draw[gedge] (v)--++(0:0.5);

\node at (0,-2.5) {(C7)};
\end{tikzpicture}
\hspace{2 em}
\begin{tikzpicture}[scale = 0.75]
\baseConfigurationdiamond{$3$}{$3$}{$3$}{$3$}

\node[vtx,label = below:$3$] (a) at (0,-3) {};
\node[vtx,label= below:$2$] (b) at (-1.5,-2.25) {};
\node[vtx,label=$+$] (c) at (1.5,-2.25) {};

\draw[gedge] (a)--(y);
\draw[gedge] (b)--(y);
\draw[gedge] (c)--(y);
\draw[gedge] (b)--(a);
\draw[gedge] (c)--(a);

\draw[gedge] (b)--++(150:0.5);
\draw[gedge] (b)--++(210:0.5);
\draw[gedge] (c)--++(30:0.5);
\draw[gedge] (c)--++(-30:0.5);

\draw[gedge] (x)--++(90:0.5);
\draw[gedge] (u)--++(180:0.5);
\draw[gedge] (v)--++(0:0.5);

\draw[dashed, rounded corners=20pt] (1.5,-4)  -- (-2.5,-4) -- (-2.5,1.8) --  (2.5,1.8)  --  (1,-2)  --   cycle;
\node[bndry] at (c) {};
\node at (0,-4.5) {(C8)};
\end{tikzpicture}
\hspace{2 em}
\begin{tikzpicture}[scale = 0.75]
\baseConfigurationdiamond{$2$}{$3$}{$2$}{}

\node[vtx,label=above:$3$] (a) at (-2.5,0) {};
\node[vtx,label=above:$3$] (b) at (-5.5,0) {};
\node[vtx,label=right:$2$] (c) at (-4,1.5) {};
\node[vtx] (d) at (-4,-1.5) {};
\draw [gedge] (a) -- (b)  (b) -- (c)  (b) -- (d)  (a) -- (d) (a) -- (c) (a)--(u);

\draw[gedge] (d)--++(240:0.5);
\draw[gedge] (d)--++(270:0.5);
\draw[gedge] (d)--++(300:0.5);
\draw[gedge] (y)--++(240:0.5);
\draw[gedge] (y)--++(270:0.5);
\draw[gedge] (y)--++(300:0.5);
\draw[gedge] (x)--++(60:0.5);
\draw[gedge] (x)--++(120:0.5);
\draw[gedge] (c)--++(60:0.5);
\draw[gedge] (c)--++(120:0.5);

\draw[dashed, rounded corners=20pt] (-6.5,-1) --  (2.5,-1) -- ( 1,2) -- ( -5,2) -- cycle;
\node at (0.5,-1.5) {$+$};
\node at (-3.5,-1.5) {$+$};
\node[bndry] at (d) {};
\node[bndry] at (y) {};
\node at (-2,-2.5) {(C9)};
\end{tikzpicture}  
\hspace{2 em}
\begin{tikzpicture}[scale = 0.75]
\baseConfigurationdiamond{$2$}{$3$}{}{$3$}

\node[vtx,label = below:$3$] (a) at (0,-3) {};
\node[vtx] (b) at (-1.5,-2.25) {};
\node[vtx,label = above:$2$] (c) at (1.5,-2.25) {};

\draw[gedge] (a)--(y);
\draw[gedge] (b)--(y);
\draw[gedge] (c)--(y);
\draw[gedge] (b)--(a);
\draw[gedge] (c)--(a);

\draw[gedge] (b)--++(150:0.5);
\draw[gedge] (b)--++(210:0.5);
\draw[gedge] (c)--++(30:0.5);
\draw[gedge] (c)--++(-30:0.5);

\draw[gedge] (x)--++(60:0.5);
\draw[gedge] (x)--++(120:0.5);
\draw[gedge] (u)--++(180:0.5);

\draw[dashed, rounded corners=20pt] (2.5,-4)  -- (-2.5,-4) -- (-0.5,-2) -- (-2.5,1) --  (2.5,1)  -- cycle;
\node at (-1.5,-1.75) {$+$};
\node at (0.5,1.5) {$+$};

\node[bndry] at (b) {};
\node[bndry] at (x) {};
\node at (0,-4.5) {(C10)};
\end{tikzpicture} \hspace{ 2 em} 
\begin{tikzpicture}[scale=0.75]
\baseConfigurationdiamond{$4$}{$3$}{$3$}{$2$}

\node[vtx,label = above:$2$] (z) at (-2.5,0) {};

\draw[gedge] (u) -- (z);

\draw[gedge] (z)--++(150:0.5);
\draw[gedge] (z)--++(210:0.5);

\draw[gedge] (x)--++(90:0.5);

\draw[gedge] (v)--++(0:0.5);

\draw[gedge] (y)--++(300:0.5);
\draw[gedge] (y)--++(240:0.5);
\node at (0,-2.5) {(C11)}; 
\end{tikzpicture}
\hspace{2 em}
\begin{tikzpicture}[scale = 0.75]
\baseConfigurationdiamond{$4$}{$3$}{$3$}{$2$}

\node[vtx,label=above:$3$] (a) at (-2.5,0) {};
\node[vtx,label = above:$3$] (b) at (-5.5,0) {};
\node[vtx,label = right:$2$] (c) at (-4,1.5) {};
\node[vtx] (d) at (-4,-1.5) {};
\draw [gedge] (a) -- (b)  (b) -- (c)  (b) -- (d)  (a) -- (d) (a) -- (c) (a)--(u);

\draw[gedge] (d)--++(240:0.5);
\draw[gedge] (d)--++(270:0.5);
\draw[gedge] (d)--++(300:0.5);
\draw[gedge] (y)--++(240:0.5);
\draw[gedge] (y)--++(300:0.5);
\draw[gedge] (x)--++(90:0.5);
\draw[gedge] (c)--++(60:0.5);
\draw[gedge] (c)--++(120:0.5);
\draw[gedge] (v)--++(0:0.5);

\draw[dashed, rounded corners=20pt] (-6.5,0) -- (0.75,-2.75) --  (2.75,-1.5) -- ( 1,2.5) -- ( -5,2.5) -- cycle;
\node at (-3.5,-1.5) {$+$};
\node[bndry] at (d) {};
\node at (-2,-2.5) {(C12)};
\end{tikzpicture}
\hspace{2 em}
\begin{tikzpicture}[scale = 0.75]
\baseConfigurationdiamond{$3$}{$3$}{$2$}{}

\node[vtx,label = above:$3$] (a) at (-4.5,0) {};
\node[vtx,label = above:$3$] (b) at (-7.5,0) {};
\node[vtx,label = right:$2$] (c) at (-6,1.5) {};
\node[vtx] (d) at (-6,-1.5) {};
\draw [gedge] (a) -- (b)  (b) -- (c)  (b) -- (d)  (a) -- (d) (a) -- (c);

\node[vtx,label = right:$2$] (k) at (-3,1.5) {};
\node[vtx,label = right:$4$] (j) at (-3,-1.5) {};
\draw [gedge] (k) -- (a)  (j) -- (a)  (k) -- (u)  (j) -- (u) (k) -- (j);

\draw[gedge] (k)--++(60:0.5) (k)--++(120:0.5);
\draw[gedge] (d)--++(240:0.5);
\draw[gedge] (d)--++(300:0.5);
\draw[gedge] (y)--++(240:0.5);
\draw[gedge] (y)--++(300:0.5);
\draw[gedge] (x)--++(60:0.5);
\draw[gedge] (x)--++(120:0.5);
\draw[gedge] (c)--++(60:0.5);
\draw[gedge] (c)--++(120:0.5);

\draw[dashed, rounded corners=20pt] (-8.5,-1) -- (-6,-1) -- (-3,-2) -- (0,-1)-- (2.5,-1) -- ( 1,2) -- (-7,2) -- cycle;
\node at (0.5,-1.5) {$+$};
\node at (-5.5,-1.5) {$+$};
\node[bndry] at (d) {};
\node[bndry] at (y) {};
\node at (-3,-2.5) {(C13)};
\end{tikzpicture}  
    \caption{Reducible Configurations for Theorem~\ref{main567}. The labels give the list sizes remaining after accounting for the external neighbors and boundary vertices.}
    \label{fig:Cconfs}
\end{figure}

\subsection{Discharging}
In this section we prove the following lemma, which by Lemma \ref{lem:weighted} implies Theorem \ref{main567}.
\begin{lemma}\label{lem:unavoidable B5}
Let $G$ be a connected $\{K_4,C_5,C_6,C_7,B_5\}$-free plane graph.
Then $G$ contains at least one of the reducible configurations \emph{\textrm{(C1)--(C13)}}.
\end{lemma}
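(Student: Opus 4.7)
The plan is to use the discharging method. Suppose for contradiction that $G$ is a connected $\{K_4,C_5,C_6,C_7,B_5\}$-free plane graph containing none of the configurations (C1)--(C13). From the exclusion of (C1) and of 5-, 6-, and 7-cycles, every vertex has degree at least $3$ and every face has size $3$, $4$, or at least $8$. Moreover, since $B_5 \not\subseteq G$, every edge lies in at most two triangles, so two triangles sharing an edge must form a diamond and no third triangle can share that edge. Assign the initial charge $\mu(v) = d(v) - 4$ to each vertex $v$ and $\mu(f) = \ell(f) - 4$ to each face $f$; by Euler's formula the total charge is $-8$. The only negatively-charged elements are $3$-vertices and $3$-faces (each with charge $-1$), while $4$-elements are neutral and the surplus lives on $5^+$-vertices and $8^+$-faces.

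I would next design rules that move this surplus toward the deficit. A natural template is: (R1) each $8^+$-face $f$ sends a fixed amount (roughly $1/2$) to each incident $3$-vertex; (R2) each $5^+$-vertex $v$ sends charge to each incident $3$-face and to each adjacent $3$-vertex, with the amount depending on whether the recipient lies in a diamond with $v$ as a middle vertex or only as a side vertex; (R3) $4$-vertices forward charge to adjacent $3$-vertices and incident triangles, but only when they themselves first collect enough from neighboring $5^+$-vertices through the diamond structure. Lemma~\ref{deglem} is the crucial counting tool here, as it caps the number of diamonds and small faces that can appear around any single vertex in a $\{C_5,C_6,C_7\}$-free plane graph, and hence caps how much each big vertex has to give away.

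I would then verify that every vertex and every face has non-negative final charge, which contradicts the total $-8$ and finishes the proof. The verification splits by the local type. For a $3$-vertex $v$, configurations (C2)--(C4), (C7), (C11) limit how many of its neighbors can be $3$- or low-degree $4$-vertices, while (C5), (C6), (C8)--(C13) restrict the neighborhoods of any diamond in which $v$ appears; combined with (R1) and (R2) this forces enough charge to reach $v$. For a $3$-face $T$, the absence of $K_4$ and $B_5$ together with (C3), (C7), and the diamond list (C8)--(C13) guarantees that $T$ either is adjacent to a sufficiently high-degree vertex or sits in a structure where (R2) delivers the missing $1$ unit from a $5^+$-vertex across the diamond. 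Faces of size $4$ and $\ge 8$ are handled directly: $4$-faces keep their neutral charge (they neither give nor receive) and $8^+$-faces donate at most $\ell(f)-4$ total by (R1).

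The main obstacle will be the case analysis around diamonds: configurations (C4)--(C13) are engineered to exclude precisely the local patterns that would leave a $4$- or $5$-vertex unable to pay for all of its neighboring $3$-vertices, $3$-faces, and companion diamonds simultaneously, and calibrating the fractions transferred in (R2) and (R3) so that each such pattern is exactly the one needed to close the corresponding case is delicate. This bookkeeping is the reason the reducible list is as long as it is, and writing out the final charge at every remaining vertex type (distinguishing $5$-vertices, $6$-vertices, and $7^+$-vertices according to how many diamonds they are a middle vertex of, as bounded by Lemma~\ref{deglem}) will constitute the bulk of the work.
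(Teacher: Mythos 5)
Your proposal sets up the same framework as the paper (minimum degree $3$ from (C1), faces of size $3$, $4$, or $\ge 8$ from the forbidden cycles, charges $d(v)-4$ and $\ell(f)-4$ summing to $-8$ by Euler's formula), but it stops exactly where the proof begins. Everything after the setup is conditional: the rules are ``a natural template'' with unspecified amounts (``roughly $1/2$''), rule (R3) is described only as ``forward charge \ldots when they themselves first collect enough,'' and you explicitly defer the calibration of the fractions and the final-charge verification as ``the bulk of the work.'' But that calibration and verification \emph{is} the lemma: the reducible configurations (C2)--(C13) enter the argument only inside the case analysis showing each vertex, face (and in the paper, each edge) ends with nonnegative charge, and no such analysis is carried out. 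As written, there is no way to check that a consistent set of rules exists at all, so the proposal is a plan rather than a proof.

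Concretely, the paper's argument needs a device your sketch does not have: charge is routed \emph{through edges}. Every edge not incident to a $3$- or $4$-face receives $1$ from its incident faces and then redistributes it by rules (R3a)--(R3d), in particular sending charge across a diamond from an outside edge at a middle $4$-vertex to the middle $3$-vertex ((R3c), (R3d)). This is how $3$-vertices in diamonds of type $Dia(4\!-\!3,4,4^+)$ and $Dia(4\!-\!4,3,4)$ get paid, and the configurations (C9), (C11), (C12) are exactly what guarantee no edge is asked to send more than $1$ and that the relevant outside neighbors are $4^+$-vertices. Your template, in which $8^+$-faces pay $3$-vertices directly and $4$-vertices only ``forward'' charge, has no mechanism for these cases, and your assertion that $4$-faces ``neither give nor receive'' conflicts with the need (handled in the paper by (R4)) to pay $3$-vertices on $4$-faces, since such a vertex may be adjacent to only two $8^+$-faces and otherwise starved. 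Similarly, the $5$-vertex analysis in the paper hinges on (C8), (C10), (C13) preventing simultaneous applications of the expensive rules, and the $6^+$-vertex bound uses Lemma~\ref{deglem} in a specific arithmetic estimate; none of this can be reconstructed from the sketch. So the gap is not a small omission but the absence of the discharging rules and of the entire verification in which the reducibility hypotheses are actually used.
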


\begin{proof} Suppose for contradiction that $G$ is a  connected $\{K_4,C_5,C_6,C_7,B_5\}$-free plane graph that contains none of the configurations (C1)--(C13). 
We use discharging to obtain a contradiction with Euler's formula.

We denote the initial charge by $ch$.
For every vertex $v$, we let $ch(v) = \deg(v)-4$, and every face $f$ we let $ch(f) = \ell(f)-4$, where $\ell(F)$ is the length of the facial walk. 
For convenience we will also assign charge to the edges of $G$. The initial charge is 0 for each edge.
By Euler's formula, the total sum of initial charges is $-8$.

We sequentially apply the following rules that move the charge around, while keeping the sum of charges unchanged. The charge at the end is called the \emph{final charge}.
The final charges will be all nonnegative, contradicting that their sum is $-8$.
\begin{itemize}
\item[(R1)] Every $8^+$-face sends charge $1/2$ to every incident $3$-face and $4$-face for every edge they have in common. 
\item[(R2)] 
For every edge $e$ that is not incident with any $3$-face or $4$-face the following applies.
If $e$ is a bridge, $e$ receives charge $1$ from the unique face incident with $e$. If $e$ is not a bridge, $e$ receives charge $1/2$ for each of the two faces incident to $e$.
\item[(R3)] For every vertex $u$ and an incident edge $e = uz$ with charge $1$:
\begin{itemize}
    \item[(R3a)] If $u$ and $z$ are both 3-vertices, then $e$ sends charge $1/2$ to $u$. 
    
    \item[(R3b)] If $u$ is a 3-vertex and $z$ is $4^+$-vertex, then $e$ sends charge $1$ to $u$. 

    \item[(R3c)] If $z$ is a $4^+$-vertex, $u$ is the middle $4$-vertex of the diamond $Dia(4-3,4,4^+)$, and $v$ is the $3$-vertex on this diamond, then $e$ sends $1$ to $v$. 
    
    \item[(R3d)] If $z$ is a $4^+$-vertex,  $u$ is one of the middle $4$-vertices of the diamond $Dia(4-4,3,4)$, and $v$ is the $3$-vertex on this diamond, then $e$ sends $1/2$ to $v$.
\end{itemize}

\item[(R4)] Every $4$-face sends charge $1$ to each incident  $3$-vertex.

\item[(R5)] Let $f$ be a $3$-face that is not part of a diamond. If exactly one vertex $u$ of $f$ has degree $3$, then $f$ sends  $1/2$ to $u$. 

\item[(R6)] The following rules apply for a $5$-vertex $u$.
If $u$ is a middle vertex in
\begin{itemize}
    \item[(R6a)]  $Dia(5-3,4,4)$ or $Dia(5-3,5,4)$, then $u$ sends $1$ to the middle $3$-vertex;
    
    \item[(R6b)]  $Dia(5-3,6^+,4^+)$  or $Dia(5-3,5,5)$, then $u$ sends $1/2$ to the middle $3$-vertex; 

    \item[(R6c)] $Dia(5-5^+,3,3)$, then $u$ sends $1/4$ to each of the two side $3$-vertices;  
    
    \item[(R6d)] $Dia(5-4^+,4^+,3)$, then $u$ sends $1/2$ to the side $3$-vertex. 
\end{itemize}

\item[(R7)] The following rules apply for every $5$-vertex $u$ and a diamond $D$, where $v$ is a side vertex of $D$. If $D$ is
\begin{itemize}
    \item[(R7a)] $Dia(4-4,5,3)$, then $u$ sends $1/2$ to the side $3$-vertex;
    
    \item[(R7b)] $Dia(4-3,5,5^+)$, then $u$ sends $1/2$ to the middle $3$-vertex;
    
    \item[(R7c)] $Dia(5-3,5,4^+)$ and $u$ has not already sent $1$ to another diamond under rule (R6a), then $u$ sends $1/2$ to the middle $3$-vertex. 
\end{itemize} 

\item[(R8)] 
The following rules apply for every $6^+$-vertex $u$ and a diamond $D$, where $u$ is a side vertex of $D$. If $D$ is
\begin{itemize} 
    \item[(R8a)]  $Dia(5-3,6^+,4^+)$, then $u$ sends $1/2$ to the middle $3$-vertex;
    
    \item[(R8b)]  $Dia(4-4,6^+,3)$, then $u$ sends $1/2$ to the side $3$-vertex;   
    
    \item[(R8c)] $Dia(4-3,6^+,4^+)$, then $u$ sends $1/2$ to the middle $3$-vertex. 
\end{itemize}

\item[(R9)] 
The following rules apply for every $6^+$-vertex $u$ and a diamond $D$, where $u$ is a middle vertex of $D$. If $D$ is
\begin{itemize}
    \item[(R9a)] $Dia(6^+-4^+,3,3)$ then $u$ sends $1/2$ to each of the side vertices;
        \item[(R9b)] $Dia(6^+-4^+,4^+,3)$, then $u$ sends $1$ to the side $3$-vertex; 
    \item[(R9c)]  $Dia(6^+-3,4^+,4^+)$ then $u$ sends $1$ to the other middle $3$-vertex.
\end{itemize}

\end{itemize}

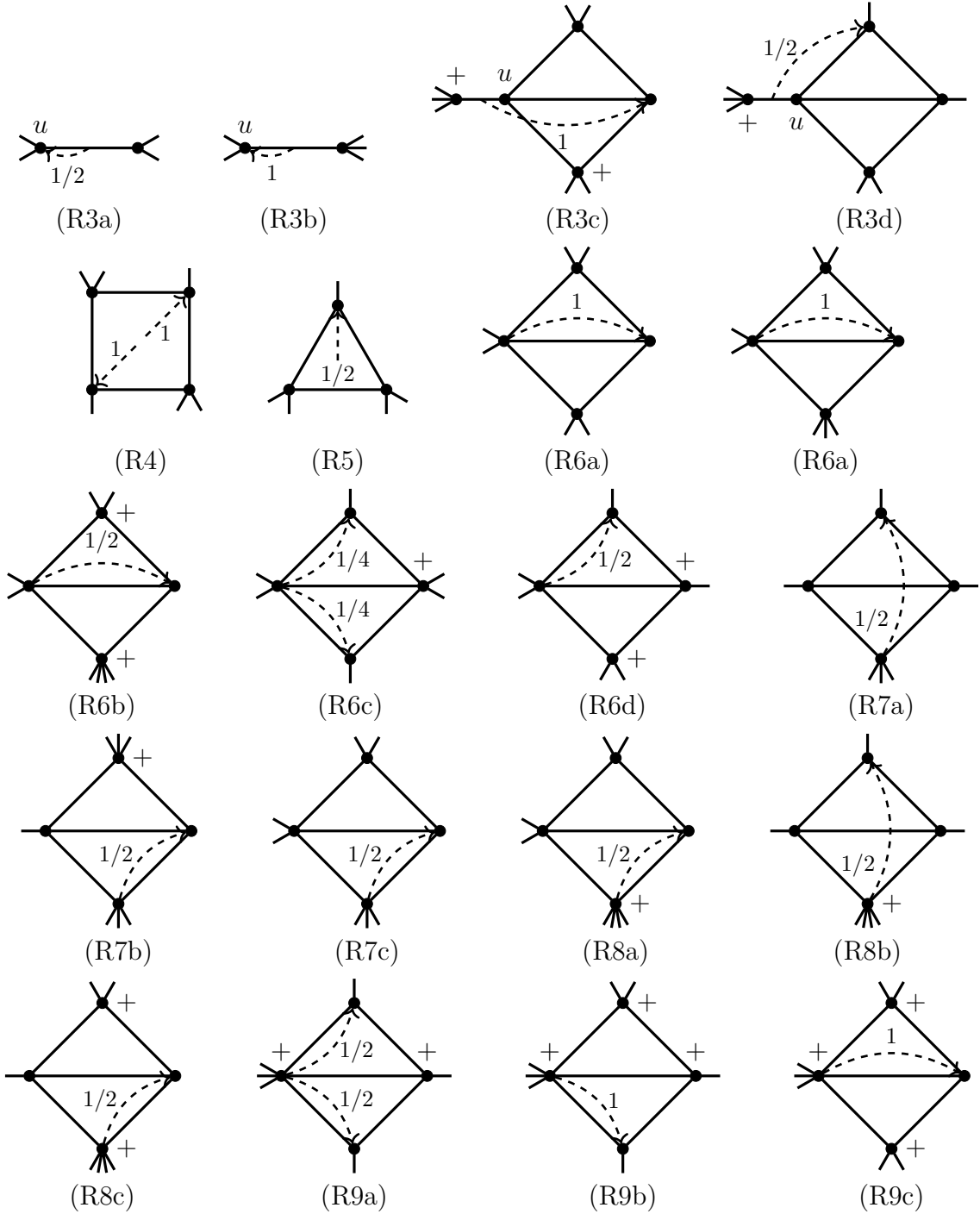
\begin{figure}[H]
    \centering
    \begin{tikzpicture}[scale = 0.75]
    \node[vtx, label = $u$] (u) at (-1,0) {};
    \node[vtx] (z) at (1,0) {}; 
    \draw[gedge] (u) to (z); 
    \draw[gedge] (u)--++(150:0.5);
    \draw[gedge] (u)--++(210:0.5);
    
    \draw[gedge] (z)--++(30:0.5);
    \draw[gedge] (z)--++(-30:0.5);
    
    \path[->]
    \chargepath{(0,0)}{dashedge,bend left}{below}{$1/2$}{(u)};
    
    \node at (0,-1.5) {(R3a)};
    \end{tikzpicture}\hspace{2 em}
    \begin{tikzpicture}[scale = 0.75]
    \node[vtx, label = $u$] (u) at (-1,0) {};
    \node[vtx,] (z) at (1,0) {}; 
    
    \draw[gedge] (u) to (z); 
    \draw[gedge] (u)--++(150:0.5);
    \draw[gedge] (u)--++(210:0.5);
    
    \draw[gedge] (z)--++(30:0.5);
    \draw[gedge] (z)--++(-30:0.5);
    \draw[gedge] (z)--++(0:0.5);
    
    \path[->]
    \chargepath{(0,0)}{dashedge,bend left}{below}{$1$}{(u)};
    
    \node at (0,-1.5) {(R3b)};
    \end{tikzpicture}\hspace{2 em}
    \begin{tikzpicture}[scale = 0.75]
    \baseConfigurationdiamond{}{}{}{}

    \node[vtx, label = $+$] (z) at (-2.5,0) {};
    \draw[gedge] (u)--(z); 
    \draw[gedge] (z)--++(180:0.5);
    \draw[gedge] (z)--++(150:0.5);
    \draw[gedge](z)--++(210:0.5);
    \draw[gedge] (x)--++(120:0.5) (x)--++(60:0.5) (y)--++(240:0.5) (y)--++(300:0.5);
    
    \path[->]
    \chargepath{(-2,0)}{dashedge,bend right}{below}{$1$}{(v)};
    
    \node at (0.5,-1.5) {$+$};
    \node at (-1.5,0.5) {$u$};
    \node at (0,-2.5) {(R3c)};
    \end{tikzpicture}\hspace{2 em}
    \begin{tikzpicture}[scale = 0.75]
    \confgurationD
    
    \node[vtx] (z) at (-2.5,0) {};
    \draw[gedge] (u)--(z); 
    \draw[gedge] (z)--++(180:0.5);
    \draw[gedge] (z)--++(150:0.5);
    \draw[gedge](z)--++(210:0.5);
    
    \path[->]
    \chargepath{(-2,0)}{dashedge, bend left}{left}{$1/2$}{(x)};
    
    \node at (-2.5,-0.5) {$+$};
    \node at (-1.5,-0.5) {$u$};
    \node at (0,-2.5) {(R3d)};
    \end{tikzpicture}\hspace{2 em}
    \begin{tikzpicture}[scale = 0.75]
    \baseConfigurationsquare
    
    \draw[gedge] (x)--++(90:0.5);
    
    \draw[gedge] (v) --++(270:0.5);
    
    \draw[gedge] (u) --++(60:0.5);
    \draw[gedge] (u) --++(120:0.5);
    
    \draw[gedge] (y) --++(240:0.5);
    \draw[gedge] (y) --++(300:0.5);
    
    \path[->]
    \chargepath{(0.1,0.1)}{dashedge}{below}{$1$}{(x)}
    \chargepath{(-0.1,-0.1)}{dashedge}{above}{$1$}{(v);}
    
    \node at (0,-2.5) {(R4)};
    \end{tikzpicture}\hspace{2 em}
    \begin{tikzpicture}[scale = 0.75]
    \baseConfigurationtriangle{}{}{}
    
    \draw[gedge] (u) -- ++(90:0.5);

    \draw[gedge] (v) -- ++(270:0.5);
    \draw[gedge] (v) -- ++(330:0.5);
    
    \draw[gedge] (w) -- ++(210:0.5);
    \draw[gedge] (w) -- ++(270:0.5);
    
    \path[->]
    \chargepath{(2,0.6)}{dashedge}{near start}{}{(u)};
    
    \node at (2,0.3){\footnotesize{1/2}};
    \node at (2,-1.5) {(R5)};
    \end{tikzpicture} \hspace{2 em}
    \begin{tikzpicture}[scale = 0.75]
    \baseConfigurationdiamond{}{}{}{}
    
    \draw[gedge] (x)--++(120:0.5) (x)--++(60:0.5) (y)--++(240:0.5) (y)--++(300:0.5);
    \draw[gedge] (u)--++(150:0.5);
    \draw[gedge] (u)--++(210:0.5);
    
    \path[->]
    \chargepath{(u)}{dashedge,bend left}{above}{$1$}{(v)};
    
    \node at (0,-2.5) {(R6a)};
    \end{tikzpicture} \hspace{2 em}
    \begin{tikzpicture}[scale = 0.75]
    \confgurationC
    
    \draw[gedge] (u)--++(150:0.5);
    \draw[gedge] (u)--++(210:0.5);
    
    \path[->]
    \chargepath{(u)}{dashedge,bend left}{above}{$1$}{(v)};
    
    \node at (0,-2.5) {(R6a)};
    \end{tikzpicture} \hspace{2 em}
    \begin{tikzpicture}[scale = 0.75]
    \baseConfigurationdiamond{}{}{}{}
    
    \draw[gedge] (x)--++(120:0.5) (x)--++(60:0.5) (y)--++(240:0.5) (y)--++(300:0.5) (y)--++(260:0.5) (y)--++(280:0.5);
    \draw[gedge] (u)--++(150:0.5);
    \draw[gedge] (u)--++(210:0.5);

    \path[->]
    \chargepath{(u)}{dashedge,bend left}{above}{$1/2$}{(v)};   
    
    \node at (0.5,1.5) {$+$};
    \node at (0.5,-1.5) {$+$};
    
    \node at (0,-2.5) {(R6b)};
    \end{tikzpicture} \hspace{2 em}
        \begin{tikzpicture}[scale = 0.75]
        \confgurationE
        \path[->]
        \chargepath{(u)}{dashedge, bend right}{right}{$1/4$}{(x)}
        \chargepath{(u)}{dashedge, bend left}{right}{$1/4$}{(y)};
        
        \node at (1.5,0.5) {$+$};
        \node at (0,-2.5) {(R6c)};
    \end{tikzpicture} \hspace{2 em}
        \begin{tikzpicture}[scale = 0.75]
        \confgurationF
        \path[->]
        \chargepath{(u)}{dashedge, bend right}{right}{$1/2$}{(x)};   
    \node at (1.5,0.5) {$+$};
    \node at (0.5,-1.5) {$+$};
    \node at (0,-2.5) {(R6d)};
    \end{tikzpicture} \hspace{2 em}
    \begin{tikzpicture}[scale = 0.75]
    \confgurationG
    
    \draw[gedge] (u) -- ++(180:0.5);
    \draw[gedge] (v)--++(0:0.5);
    \draw[gedge] (x) -- ++(90:0.5);
    
    \path[->] 
    \chargepath{(y)}{dashedge, bend right}{near start, left}{$1/2$}{(x)};
    
    \node at (0,-2.5) {(R7a)};
    \end{tikzpicture} \hspace{2 em}
    \begin{tikzpicture}[scale = 0.75]
    \confgurationG
    
    \draw[gedge] (u) -- ++(180:0.5);
    \draw[gedge] (x)--++(60:0.5);
    \draw[gedge] (x)--++(120:0.5);
    \draw[gedge] (x) -- ++(90:0.5);
    
    \path[->]
    \chargepath{(y)}{dashedge, bend left}{left}{$1/2$}{(v)};
    \node at (0.5,1.5) {$+$};
    \node at (0,-2.5) {(R7b)};
    \end{tikzpicture} \hspace{2 em}
    \begin{tikzpicture}[scale = 0.75]
    \confgurationG
    
    \draw[gedge] (x)--++(60:0.5);
    \draw[gedge] (x)--++(120:0.5);  
    
    \draw[gedge] (u)--++(150:0.5);
    \draw[gedge] (u)--++(210:0.5);
    
    \path[->]
    \chargepath{(y)}{dashedge, bend left}{left}{$1/2$}{(v)};

    \node at (0,-2.5) {(R7c)};
    \end{tikzpicture} \hspace{2 em}
    \begin{tikzpicture}[scale = 0.75]
    \baseConfigurationdiamond{}{}{}{}
    \draw[gedge] (x)--++(60:0.5);
    \draw[gedge] (x)--++(120:0.5);  
    
    \draw[gedge] (u)--++(150:0.5);
    \draw[gedge] (u)--++(210:0.5);
    
    \draw[gedge] (y)--++(240:0.5) (y)--++(260:0.5) (y)--++(280:0.5) (y)--++(300:0.5);
    
    \path[->]
    \chargepath{(y)}{dashedge, bend left}{left}{$1/2$}{(v)};
    
    \node at (0.5,-1.5) {$+$};
    \node at (0,-2.5) {(R8a)};
    \end{tikzpicture} \hspace{2 em}
    \begin{tikzpicture}[scale = 0.75]
    \baseConfigurationdiamond{}{}{}{}
    \draw[gedge] (x)--++(90:0.5);
    
    \draw[gedge] (u)--++(180:0.5);
    \draw[gedge] (v)--++(0:0.5);
    
    \draw[gedge] (y)--++(240:0.5) (y)--++(260:0.5) (y)--++(280:0.5) (y)--++(300:0.5);
    
    \path[->]
    \chargepath{(y)}{dashedge, bend right}{near start, left}{$1/2$}{(x)};
    
    \node at (0.5,-1.5) {$+$};
    \node at (0,-2.5) {(R8b)};
    \end{tikzpicture} \hspace{2 em}
        \begin{tikzpicture}[scale = 0.75]
    \baseConfigurationdiamond{}{}{}{}
    \draw[gedge] (x)--++(60:0.5);
    \draw[gedge] (x)--++(120:0.5);
    
    \draw[gedge] (u)--++(180:0.5);

    \draw[gedge] (y)--++(240:0.5) (y)--++(260:0.5) (y)--++(280:0.5) (y)--++(300:0.5);
    
    \path[->]
    \chargepath{(y)}{dashedge, bend left}{left}{$1/2$}{(v)};
    
    \node at (0.5,1.5) {$+$};
    \node at (0.5,-1.5) {$+$};
    \node at (0,-2.5) {(R8c)};
    \end{tikzpicture} \hspace{2 em}
    \begin{tikzpicture}[scale = 0.75]
    \baseConfigurationdiamond{}{}{}{}
    
    \draw[gedge] (u)--++(180:0.5) (u)--++(150:0.5) (u)--++(210:0.5) (v) --++(0:0.5);
    \draw[gedge] (x)--++(90:0.5);
    \draw[gedge] (y)--++(270:0.5);
    \path[->]
    \chargepath{(u)}{dashedge, bend left}{right}{$1/2$}{(y)}
    \chargepath{(u)}{dashedge, bend right}{right}{$1/2$}{(x)};
    
    \node at (-1.5,0.5) {$+$};
    \node at (1.5,0.5) {$+$};
    \node at (0,-2.5) {(R9a)};
    \end{tikzpicture} \hspace{ 2 em}
    \begin{tikzpicture}[scale = 0.75]
    \baseConfigurationdiamond{}{}{}{}
    
    \draw[gedge] (u)--++(180:0.5) (u)--++(150:0.5) (u)--++(210:0.5);
    \draw[gedge] (v)--++(0:0.5);
    \draw[gedge] (x)--++(120:0.5) (x)--++(60:0.5);
    \draw[gedge] (y)--++(270:0.5);
    \path[->]
    \chargepath{(u)}{dashedge, bend left}{right}{$1$}{(y)};
  
    \node at (-1.5,0.5) {$+$};
    \node at (1.5,0.5) {$+$};
    \node at (0.5,1.5) {$+$};  
    \node at (0,-2.5) {(R9b)};
    \end{tikzpicture} \hspace{2 em}
    \begin{tikzpicture}[scale = 0.75]
    
    \baseConfigurationdiamond{}{}{}{}
    
    \draw[gedge] (u)--++(180:0.5) (u)--++(150:0.5) (u)--++(210:0.5);
    \draw[gedge] (x)--++(120:0.5) (x)--++(60:0.5);
    \draw[gedge] (y)--++(240:0.5) (y)--++(300:0.5);
    
    \path[->]
    \chargepath{(u)}{dashedge, bend left}{above}{$1$}{(v)};
    
    \node at (-1.5,0.5) {$+$};
    \node at (0.5,1.5) {$+$};
    \node at (0.5,-1.5) {$+$};
    \node at (0,-2.5) {(R9c)};
    \end{tikzpicture}
    \caption{Discharging rules for Theorem~\ref{main567}}
    \label{fig:my_label}
\end{figure}

\begin{claim}\label{facecheck}
The final charge of every face of $G$ is nonnegative. 
\end{claim}
\begin{proof}
Given that $G$ does not contain any faces of length $5,6$ or $7$, we consider $8^+$-faces, $4$-faces, and $3$-faces as three separate cases covering everything.

Suppose that $f$ is an $8^+$-face. 
Then the initial charge of $f$ is equal to $\ell(f) - 4$.
By (R1) and (R2), $f$ sends at most $\frac{1}{2}$ for each of these edge that is not a bridge and charge $1$ to each bridge by (R2). 
This means that $f$ sends at most $\left\lceil \frac{\ell(f)}{2} \right\rceil \leq \ell(f) - 4$ total charge. 
Since (R1) and (R2) are the only rules requiring an $8^+$-face to send out charge, every $8^+$-face has nonnegative final charge.

Suppose that $f$ is a $4$-face. Then $f$ has its initial charge $0$. Since $C_5$, $C_6$, and $C_7$ are forbidden subgraphs, $f$ must be incident with four $8^+$-faces. 
By (R1), each face sharing an edge with $f$ sends charge $\frac{1}{2}$ to $f$ for every edge they have in common, leaving $f$ with a total charge of $2$ before applying (R2)--(R9). 
Given that (C2) is a reducible configuration, $f$ cannot contain more than two $3$-vertices.
Thus, (R4) applies to $f$ at most twice, which decreases the charge at $f$ by at most 2.
Since no other rules apply to $4$-faces, $f$ has nonnegative final charge. 

Next suppose that $f$ is a $3$-face that is not contained in a diamond.
Every face incident to $f$ must be an $8^+$-face since $C_5$, $C_6$, and $C_7$ are forbidden subgraphs. 
This means that after applying (R1), $f$ has charge $\frac{1}{2}$. 
Among rules (R2)--(R9), only (R5) requires a $3$-face to send out charge. 
If (R5) applies to $f$, then it only requires $f$ to send a charge of $\frac{1}{2}$. 
This means that $f$ has nonnegative final charge. 

Lastly, assume that $f$ is a $3$-face contained in a diamond $D$.
Then $f$ shares one edge with another $3$-face. 
Since $C_5$, $C_6$, and $C_7$ are forbidden subgraphs, $f$ shares its other two edges with $8^+$-faces. 
By (R1), $f$ receives charge at least $\frac{1}{2}$ for each edge it shares with an $8^+$-face, leaving $f$ with charge at least $0$ before applying (R2)--(R9). 
None of these rules, however, demand charge from a $3$-face that is contained in a diamond, implying that $f$ will end with nonnegative charge. 
As we have considered all possible faces in $G$, this completes the proof of Claim~\ref{facecheck}.
\end{proof}

\begin{claim}\label{edgecheck}
The final charge of every edge of $G$ is nonnegative. 
\end{claim}
\begin{proof}
Let $e = uz$ be an edge of $G$. If $e$ is incident with a $3$-face or a $4$-face, then none of the rules apply to $e$ and there is nothing to prove. Otherwise, $e$ has charge $1$ after applying (R2). As (R3) is the only rule that requires any edge to send out charge, it suffices to verify that $e$ will never be asked to give more than $1$ charge under (R3). 

If $u$ and $z$ are both $3$-vertices, then only (R3a) applies to $e$ and the edge sends exactly $\frac{1}{2}$ to each of $u$ and $z$. If $u$ is a $3$-vertex and $z$ is a $4^+$-vertex, then only (R3b) applies, and $e$ send exactly $1$ to $u$. 

If $u$ is a $4^+$-vertex and $z$ is a $5^+$-vertex, then (R3) does not apply with $z$ and $e$ sends charge at most $1$ using either (R3c) or (R3d).

The remaining case is that both $u$ and $z$ are 4-vertices.
The rules demand $e$ to send charge more than 1 if by symmetry (R3c) applies with $u$ and one of (R3c) and (R3d) applies with $z$. However, this would give reducible configurations 
(C9) and (C12), respectively. 
Therefore, no edge in $G$ that begins with charge $1$ will ever be asked to send out more than $1$ total charge, completing the proof of Claim~\ref{edgecheck}. 
\end{proof}

\begin{claim}\label{4pluscheck}
The final charge of every $4^+$-vertex is nonnegative. 
\end{claim}
\begin{proof}
Suppose that $v$ is a $4$-vertex. The initial charge of $v$ is $0$, and there are no rules requiring $v$ to send out charge, so $v$ will end with nonnegative charge. 

Next suppose that $v$ is a $5$-vertex. Then the initial charge of $v$ is $1$. Only (R6) and (R7) require a $5$-vertex to distribute charge. Therefore, we may assume that $v$ is incident with at least one diamond. Given that $G$ does not contain any $C_5$, $C_6$, $C_7$, or $B_5$ subgraphs, $v$ is incident with at most two diamonds. 

First suppose that $v$ is incident with exactly one diamond $D$. If $v$ is a middle vertex of $D$ then only (R6) applies to $v$, and if $v$ is a side vertex of $D$ then only (R7) applies to $v$. 
As neither of these two rules will require $v$ to send out charge more than $1$, $v$ will end with nonnegative charge. 

Next suppose that $v$ is incident with two diamonds $D_1$ and $D_2$. 
Since that $G$ does not contain any $C_5$, $C_6$, $C_7$, or $B_5$ subgraphs, $D_1$ and $D_2$ must be edge disjoint. Since $d(v) = 5$, $v$ cannot be a middle vertex of both diamonds. If $v$ is a side vertex of both diamonds, then only (R7) applies to $v$. As (R7) will not require $v$ to send charge more than $\frac{1}{2}$ to either diamond, $v$ will end with nonnegative charge.

Therefore, we may assume that $v$ is a middle vertex of $D_1$ and a side vertex of $D_2$.
In this case, it is possible that both (R6) and (R7) apply to $v$. 
Among the subcases of (R6), only (R6a) requires $v$ to send out charge for more than $\frac{1}{2}$, and (R7) will never ask $v$ send out charge more than $\frac{1}{2}$. 
Given that configuration (C8) is reducible, (R6a) cannot apply with (R7a). 
Next, given that configuration (C10) is reducible, (R6a) cannot apply with (R7b). 
By assumption of (R7c), (R6a) cannot apply with (R7c). 
Therefore $v$ is never asked to send more than $1$, implying that $v$ will end with nonnegative charge. 

Now suppose that $v$ is a $6^+$-vertex. 
The only rules that apply to $v$ are (R8) and (R9). 
Under these rules, $v$ sends at most $1$ to all diamonds that contain $v$ as a middle vertex, and $v$ sends at most $1/2$ to all diamonds that contain $v$ as a side vertex. 
Assume that $v$ is the middle vertex of $k$ distinct diamonds, and incident to $m$ other faces of size $3$. By Lemma~\ref{deglem}, the final charge of $v$ is at least
$$d(v) - 4 - k - \frac{m}{2} = 
d(v)-4 - \frac{3k+2m}{4} -\frac{k}{4} \ge \frac{3d(v)}{4}-4 -  \frac{1}{4}\Big\lfloor\frac{d(v)}{3} \Big\rfloor \ge \frac{2d(v)}{3}-4,$$
and $\frac{2d(v)}{3}-4$ is nonnegative whenever $d(v) \geq 6$. This completes the proof of Claim~\ref{4pluscheck}. 
\end{proof}

\begin{claim}\label{3vert}
The final charge of every $3$-vertex that is not contained in a diamond is nonnegative. 
\end{claim}
\begin{proof}
Let $v$ be a $3$-vertex that is not contained in a diamond. 
Then the initial charge of $v$ is $-1$. 
As there are no rules requiring $v$ to send out charge, we only need to verify that $v$ will receive charge at least $1$. 
First suppose that $v$ is not incident to any $3$-faces or $4$-faces. Then each of the three edges incident to $v$ receive charge $1$ under (R2). 
Next, each of these edges sends $\frac{1}{2}$ to $v$ by (R3), leaving $v$ with a charge of $\frac{1}{2}$. 

Now suppose that $v$ is incident to at least one $4$-face $f$.
By (R4), $v$ receives $1$ from $f$ and we are done. 
Therefore, we may assume that $v$ is not incident to any $4$-face, and that $v$ is incident to at least one $3$-face $T$. 
By assumption, $T$  is not contained in a diamond. 
\begin{itemize}
    \item Case 1: $T$ contains another $3$-vertex. In this case, $v$ must be adjacent to a $4^+$-vertex $u$ that is not contained $T$, since (C2) is reducible. Then by (R3b), $v$ receives a charge of $1$ from the edge $uv$. 
    \item Case 2: $T$ contains two  $4^+$-vertices. Again, let $u$ be the neighbor of $v$ that in not contained in $T$. Here, $v$ will receive at least $\frac{1}{2}$ from (R3). With that being said, $T$ will send the remaining $\frac{1}{2}$ to $v$ under (R5). 
\end{itemize}
This completes the proof of Claim~\ref{3vert}. \end{proof}

\begin{claim}\label{3vertdiamond}
The final charge of every $3$-vertex that is incident to a diamond is nonnegative. 
\end{claim}
\begin{proof}
Assume that $v$ is a $3$-vertex incident to a diamond $D$.
Since (C2) and (C3) are reducible, there is at most one other $3$-vertex incident to $D$. 
Since the initial charge of $v$ is $-1$, and there is no rule requiring a $3$-vertex to send charge, it suffices to show that $v$ will always receive charge at least $1$ after applying rules (R1)--(R9). 
We consider the following cases. \medskip

\textbf{Case 1:} $v$ is the only $3$-vertex incident to $D$ and $v$ is a side vertex of $D$. 
Given the list of forbidden subgraphs in $G$, the other two faces incident to $v$ must be $8^+$-faces.
Hence by (R3), $v$ receives charge at least $\frac{1}{2}$ from the only edge incident to $v$ that is not a part of $D$. 
There are three subcases to Case 1 showing how $v$ gets another $\frac{1}{2}$ of charge.
\begin{enumerate}
    \item $D = Dia(4-4,4,3)$ 
    \\  Let $x$ and $y$ denote the two middle vertices of $D$. Since (C11) is reducible, each of the neighbors of $x$ and $y$ that are not contained in $D$ must be $4^+$-vertices. Therefore by (R3d), $v$ receives $\frac{1}{2}$ from the each of two edges incident to $x$ and $y$ that are not contained in $D$. %
    \item $D = Dia(4-4,5^+,3)$
    \\  %
    Let $u$ denote the other side vertex in $D$. If $u$ is a $5$-vertex, then $v$ receives $\frac{1}{2}$ from $u$ by (R7a). If $u$ is a $6^+$-vertex, then $v$ receives $\frac{1}{2}$ from $u$ by (R8b). 
    \item $D = Dia(5^+-4^+,4^+,3)$
    \\  %
    Let $u$ denote the $5^+$-vertex that is the middle vertex of $D$. If $d(u) = 5$, then $u$ sends charge $\frac{1}{2}$ to $v$ by (R6d). If $d(u) \geq 6$, then $u$ sends charge $\frac{1}{2}$ by (R9b). 
\end{enumerate}
In all three cases, the final charge of $v$ is nonnegative.
\medskip

\textbf{Case 2:} $v$ is the only $3$-vertex incident to $D$ and $v$ is a middle vertex of $D$. 
There are four subcases to Case 2. In each $v$ receives charge $1$ which leads to nonnegative final charge.
\begin{enumerate}
    \item $D = Dia(4-3,4,4^+)$
    \\ Let $u$ denote the middle $4$-vertex of $D$. Since (C4) is reducible, the unique neighbor $z$ of $u$ not contained in $D$ must be a $4^+$-vertex. Therefore, the edge $uz$ will send charge $1$ to $v$ by (R3c), leaving $v$ with nonnegative charge. 
    
    \item $D = Dia(4-3,5^+,5^+)$
    \\ Let $x$ and $y$ denote the two side vertices of $D$. If $d(x) = 5$, then $x$ will send charge $\frac{1}{2}$ to $v$ by (R7b). If $d(x) \geq 6$, then $x$ will send charge $\frac{1}{2}$ to $v$ by (R8c). As the rules apply to $y$ identically as they do to $x$, it follows that $v$ will end with nonnegative charge. 
    
    \item $D = Dia(5-3,4^+,4^+)$. 
    \\ Let $u$ denote the middle $5$-vertex of $D$ and let $x$ and $y$ denote each of the side $4^+$-vertices of $D$ with $d(x) \leq d(y)$.
    If $d(x) = d(y) = 4$, then $u$ sends charge $1$ to $v$ by (R6a). If $d(x) = 4$ and $d(y) = 5$, then $u$ sends charge $1$ to $v$ by (R6a).
    If $d(x) = 4$ and $d(y) \geq 6$, then $u$ sends charge $\frac{1}{2}$ to $v$ by (R6b) and $y$ sends $\frac{1}{2}$ to $v$ by  (R8a). 
    
    If $d(x) = d(y) = 5$, then $u$ sends charge $\frac{1}{2}$ to $v$ by (R6b). Since (C13) is reducible, $x$ and $y$ will not both send charge $1$ to a vertex of a different diamond under rule (R6a). Therefore, $v$ receives charge $\frac{1}{2}$ from either $x$ or $y$ by (R7c). If $d(y) \geq 6$, $y$ sends charge $\frac{1}{2}$ to $v$ by (R8a) and $u$ sends charge $\frac{1}{2}$ to $v$ by (R6b). This leaves $v$ with nonnegative charge.
    
    \item $D = Dia(6^+-3,4^+,4^+)$
    \\ Let $u$ denote the middle vertex of $D$. Then $u$ sends charge $1$ to $v$ by (R9c). This leaves $v$ with nonnegative charge. 
\end{enumerate}

\textbf{Case 3:} There are two $3$-vertices incident to $D$, one of which is $v$. Let $x$ denote the other $3$-vertex incident to $D$. Since (C5) and (C6) are reducible configurations, both $x$ and $v$ are side verties of $D$. Since (C7) is reducible, we may assume that if one of the middle vertices of $D$ is a $4$-vertex, then the other middle vertex is a $6^+$-vertex. There are two subcases to Case 3. 
\begin{enumerate}
    \item $D = Dia(6^+-4,3,3)$. 
    \\ Let $u$ denote the $6^+$-vertex incident to $D$. By (R9a), $u$ sends charge $\frac{1}{2}$ to $v$. Since $v$ is a side vertex of $D$, and $d(v) = 3$, it follows that $v$ is incident to exactly one edge that is not contained in $D$. By (R3), $v$ will receive charge at least $\frac{1}{2}$ from this edge, leaving $v$ with nonnegative charge.
    The case of $x$ is symmetric.
    \item $D = Dia(5^+-5^+,3,3)$
    \\ Since $v$ is a side vertex of $D$, and $d(v) = 3$, it follows that $v$ is incident to exactly one edge that is not contained in $D$. By (R3), $v$ will receive charge at least $\frac{1}{2}$ from this edge.
    
    Let $a$ and $b$ denote the middle vertices of $D$. If $d(a) = 5$, then $v$ receives charge $\frac{1}{4}$ from $a$ by (R6c). If $d(a) \geq 6$, then $v$ receives charge $\frac{1}{2}$ from $a$ by (R9a). As the rules apply to $b$ identically as they do to $a$, it follows that $v$ will end with nonnegative charge. 
    Again, the case of $x$ is symmetric.
\end{enumerate}
Since we have covered all cases where $v$ is contained in a diamond, this completes the proof of Claim~\ref{3vertdiamond}. 
\end{proof}
\medskip

Claims~\ref{facecheck}--\ref{3vertdiamond} show that the final charge of every vertex, face, and edge is nonnegative. Hence the sum of the charges is also nonnegative, which is a contradiction with the sum of the initial charges being $-8.$
This finishes the proof of Lemma~\ref{lem:unavoidable B5}.
\end{proof}

\section{Proof of Theorem \ref{main567weak}}

\subsection{Reducible configurations}

We will use the following list of enhanced weakly reducible configurations. See Figure~\ref{fig:reducibleA} for illustration of these configurations. 
\begin{itemize}
\item[(D1)] A vertex of degree at most 2. 
\item[(D2)] $T(3,3,3)$ and $T(3,3,4)$. %
\item[(D3)] Two diamonds $D_1 = Dia(6-3,4,3)$ and $D_2 = Dia(6-3,4,4)$ sharing a middle $6$-vertex.
\item[(D4)]
$Dia(3-3,4^+,4^+)$ where the side vertices are in the boundary. 
\item[(D5)]
$Dia(4-5,3,3)$.

\item[(D6)] 
$Dia(4-3,4,4)$.
\item[(D7)]
$Dia(5-3,4,4)$ with another 3-vertex adjacent to the 5-vertex.

\item[(D8)]
$Dia(5-5,3,3)$ with another 3-vertex adjacent to one of the 5-vertices.

\item[(D9)] Three $3$-vertices $u,v,w$ such that $uv$ and $vw$ are edges, and $u$ and $w$ are independent.

\item[(D10)]
$Dia(5-3,4,3)$.

\item[(D11)]
$T(5,3,3)$ with another $3$-vertex adjacent to the $5$-vertex.

\item[(D12)]
Two triangles $T_1 = T_2 = T(6,3,3)$ sharing the $6$-vertex. 
\end{itemize}

In the next section we will prove the following theorem, showing  that (D1)--(D12) are unavoidable.
We remark that no identification of vertices in (D1)--(D12) is possible since otherwise, it creates a forbidden subgraph.
It is possible that some external edges can be identified in (D8), (D9), and (D11). We explicitly list those cases in Figure~\ref{fig:reducibleA} as (D8'), (D9'), (D11'), and (D11'').

\begin{thm}
\label{thm:dischargelagebook}
Every $\{K_4, C_5, C_6, C_7\}$-free planar graph  contains one of {\em (D1)--(D12)}.
\end{thm}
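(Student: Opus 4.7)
The plan is to follow the discharging approach used in the proof of Lemma~\ref{lem:unavoidable B5}, modified to accommodate the different set of reducible configurations and the absence of the $B_5$-free hypothesis. Assume for contradiction that $G$ is a connected $\{K_4,C_5,C_6,C_7\}$-free plane graph containing none of (D1)--(D12). Assign initial charges $\ch(v)=\deg(v)-4$ to each vertex and $\ch(f)=\ell(f)-4$ to each face; by Euler's formula these sum to $-8$. I will design a set of local charge-preserving discharging rules and show that every face, edge, and vertex ends with nonnegative charge, contradicting the total being $-8$.

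The discharging rules will follow the general architecture of (R1)--(R9): long faces pump their excess into adjacent short faces and onto edges that lie on no short face; such edges pass charge to incident $3$-vertices; triangles and $4$-faces pass any remaining charge to their $3$-vertices; and $5^+$-vertices that sit on diamonds or triangles containing low-degree neighbors ship charge outward. The exact rates will differ from (R1)--(R9) because (D2) now excludes both $T(3,3,3)$ and $T(3,3,4)$, because (D6) and (D10) restrict what a diamond with a $4$- or $5$-middle vertex can look like, and because, absent the $B_5$-free assumption, we can no longer bound a priori the number of diamonds or triangles sharing an edge at a high-degree vertex.

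The verification will split into four claims mirroring Claims~\ref{facecheck}--\ref{3vertdiamond}: faces remain nonnegative after the rules that drain them; edges give away no more than they collect; $4^+$-vertices absorb the demand of their incident diamonds and triangles; and every $3$-vertex collects at least $1$ unit. The $3$-vertex analysis uses (D9), which forbids a path of three $3$-vertices, together with (D4) for diamond-incident $3$-vertices, to locate a $4^+$-neighbor that can feed charge through the edge rule. The $5$-vertex analysis is closed off by (D5)--(D8), (D10), (D11): these rule out every configuration in which a $5$-vertex would need to ship out more than its budget of $1$.

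The principal obstacle is controlling $6^+$-vertices. Without a book constraint, a $6^+$-vertex $v$ can be the middle vertex of many diamonds simultaneously. Lemma~\ref{deglem} bounds this number by $\lfloor d(v)/3\rfloor$, giving a comfortable average budget per diamond, but the individual demand of a single diamond can be as large as $1$. Configurations (D3) and (D12) are tailored precisely to this scenario: (D3) prevents a $6$-vertex from being the middle of both a $Dia(6-3,4,3)$ and a $Dia(6-3,4,4)$ simultaneously, while (D12) prevents a $6$-vertex from being the apex of two triangles $T(6,3,3)$. By designing the rules so that the worst-case combined demand on a $6$-vertex is realized only inside one of these forbidden configurations, the charge accounting at $d(v)=6$ closes; the extension to $7^+$-vertices then follows from the averaging bound $\tfrac{2d(v)}{3}-4 \ge 0$ used in Claim~\ref{4pluscheck}.
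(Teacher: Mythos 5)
There is a genuine gap: what you have written is a plan for a discharging proof, not a proof. In a discharging argument the entire mathematical content lies in the explicit rules and the exhaustive verification that every object ends with nonnegative charge; you defer both (``the exact rates will differ from (R1)--(R9)\dots'', ``by designing the rules so that the worst-case combined demand on a $6$-vertex is realized only inside one of these forbidden configurations, the charge accounting at $d(v)=6$ closes''). That last sentence is precisely the statement that needs to be proved, and asserting that rules with this property can be designed is not an argument. The concern is not merely formal: with your normalization $\ch(v)=\deg(v)-4$, $\ch(f)=\ell(f)-4$, a middle $3$-vertex of a diamond has no external edges, each $3$-face of a diamond nets only $0$ after rule (R1)-type transfers, and $4$-vertices carry charge $0$; so, for instance, in a $Dia(4\hbox{-}3,4,5)$ whose middle $4$-vertex has a $3$-valent external neighbor (a situation not excluded by (D1)--(D12); the relevant configurations (C4), (C11) from the $B_5$-free proof are not available here), the unit of charge for the middle $3$-vertex must come from the single $5$-side vertex, which has total charge $1$ and may face competing demands from a second short face. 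Whether such cases close is exactly what the missing case analysis must decide, and your sketch gives no reason to believe your chosen rates can be completed consistently.

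For comparison, the paper proves the statement (as Lemma~\ref{lem:unavoidable Bk}) by a discharging argument with a different architecture: charges $2\deg(v)-6$ and $\ell(f)-6$ summing to $-12$, no charge on edges, $3$-vertices kept at charge $0$ throughout, the deficit concentrated on $3$- and $4$-faces, graded rates $1.75/1.5/1.25$ for $6^+$-vertices on diamonds (rules (R9)--(R12)), and a final rebalancing rule (R13) that lets the two $3$-faces of a diamond be analyzed as a pair; the verification then runs through an explicit case split over all diamond types $Dia(a\hbox{-}b,c,d)$, using (D3), (D9), (D12) exactly where you anticipate, but also (D2), (D5)--(D8), (D10), (D11) at specific rates. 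Your intuition about the role of (D3), (D12) and of Lemma~\ref{deglem} is sound, but until you fix the rules and carry out the full case analysis (or switch to the paper's normalization and do the same), the theorem is not established.
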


Let $\mathcal{F}=\{K_4, C_5, C_6, C_7,B_\ell\}$ for any fixed $\ell$. We will show that (D2), (D3), and (D5)--(D12) are enhanced weakly $(\mathcal{F},4)$-boundary-reducible configurations.
We will also show that (D4) is only a weakly $(\mathcal{F},4)$-boundary-reducible configuration.
However, the only neighbors of the vertices in the reducible part of (D4) are the non-adjacent vertices in the boundary, and all non-adjacent pairs that do not forbid $\mathcal F$ in each of (D1)--(D12) form a loose sets, implying that the condition required by the enhanced weak resolution is satisfied.

In order to use Lemma~\ref{lem:weak}, we want to have an enhanced weak $(\mathcal{F},4,b,\beta)$-resolution for some $\beta$ and $b$.
First, we check the condition (TIGHT) in the following lemma.
\begin{lemma}
Let $G$ be an $\mathcal{F}$-free graph containing $H$, where $H$ is one of (D2)--(D12). 
The number of $H$-tight vertices is at most $\beta \leq 10\ell$.
\end{lemma}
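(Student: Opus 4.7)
The plan is to perform a case analysis on the configurations (D2)--(D12). Recall that an $H$-tight vertex $v$ is a degree-$2$ vertex of $G$ with $N_G(v) = \{x,y\} \subseteq V(H)$ such that $\{x,y\}$ is not loose in $H$. For each configuration $H$, the idea is to enumerate the pairs $\{x,y\} \subseteq V(H)$ and bound the number of tight vertices with this particular neighborhood; the overall bound on $\beta$ then follows by summing these contributions.

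Edge pairs are handled uniformly by invoking the forbidden book. If $xy \in E(G)$, then a tight vertex $v$ with $N_G(v) = \{x,y\}$ together with $x,y$ forms a triangle on the edge $xy$. Since $G$ is $B_\ell$-free, the number of triangles through any fixed edge is at most $\ell - 3$, so after subtracting the common neighbors of $x,y$ already present in $H$, the pair $\{x,y\}$ contributes at most $\ell - 3$ tight vertices. Each of (D2)--(D12) has at most ten edges, giving at most $10(\ell - 3) < 10\ell$ tight vertices from edge pairs in total.

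Non-edge pairs are handled configuration by configuration, using the forbidden short cycles $C_5$, $C_6$, $C_7$. If $x,y \in V(H)$ are non-adjacent in $G$, then a degree-$2$ vertex $v$ with $N_G(v) = \{x,y\}$ closes any $xy$-path in $H$ into a cycle through $v$ that is two edges longer. I would check that for each non-adjacent pair where both $x$ and $y$ have positive external degree, there exists an $xy$-path in $H$ of length $3$, $4$, or $5$, so that the resulting cycle is $C_5$, $C_6$, or $C_7$, contradicting $\mathcal F$-freeness. For instance, in (D4) the side vertices $s_1,s_2$ are joined by the path $s_1 - m_1 - m_2 - s_2$ of length $3$ inside $H$, so any external degree-$2$ common neighbor creates a $C_5$, and hence no tight vertex has neighborhood $\{s_1,s_2\}$.

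The main obstacle is the case analysis of non-edge pairs: for each of (D2)--(D12) and each non-adjacent pair $\{x,y\} \subseteq V(H)$ with positive external degree at both endpoints, one has to exhibit a short $xy$-path in $H$ of length in $\{3,4,5\}$. This is tedious but routine, since each configuration has at most seven vertices and few relevant pairs; one must also be careful to distinguish loose pairs in the reduced part (which by definition yield no tight vertex) from non-loose pairs, and to verify that in each configuration the specified vertex degrees prevent external edges where they are claimed absent. Combined with the edge bound above, this yields $\beta \leq 10\ell$ in every case, establishing the claim.
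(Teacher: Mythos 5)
Your proposal is correct and follows essentially the same route as the paper: adjacent pairs are bounded by $\ell-3$ tight vertices per edge via $B_\ell$-freeness and the at-most-ten-edges count, while non-adjacent pairs are eliminated case by case because a common external $2$-neighbor would close a short path in $H$ into a forbidden $C_5$/$C_6$ (the paper does not need $C_7$), with the non-edge of (D9) excluded because it is loose. Your explicit filter ``both endpoints have positive external degree'' and your treatment of the non-edge in (D4) are if anything slightly more careful than the paper's one-line dismissal of that case.
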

\begin{proof}
Let $H$ be one of (D2)--(D12) and  $v$ be an $H$-tight vertex adjacent to $u$ and $w$ in $H$.
First, suppose that $u$ and $w$ are not adjacent.
There are no non-edges in (D2) and (D4).
The non-edge in (D9) forms a loose set.
By inspection of each pair of non-adjacent verticese in (D5)--(D8) and  (D10)--(D12), we observed that if $v$ was adjacent to any of these pairs, we would obtain a $C_5$ or a $C_6$, contradicting that $G$ is $\mathcal{F}$-free.

Second, suppose that $uw$ is an edge.
As $B_\ell$ is in $\mathcal{F}$, the number of $H$-tight vertices for $uv$ is at most $\ell-3$.
Since $H$ is one of (D2)--(D12), it has at most $10$ edges. 
This bounds that the total number of $H$-tight vertices as $\beta\leq 10(\ell-3) \leq  10\ell$.
\end{proof}

\begin{lemma}
  The configurations (D2), (D3), (D5)--(D12) are enhanced weakly $(\mathcal{F},4)$-boundary-reducible. See Figure~\ref{fig:reducibleA} for illustration.
\end{lemma}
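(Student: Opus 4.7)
The plan is to verify, for each configuration $H$ in the list (D2), (D3), (D5)--(D12), that there exist non-empty sets $\mathrm{Fix}(H) \subseteq R \subseteq V(H)$ satisfying the conditions (FIX) and (FORB) of Definition~\ref{new_weak_reduce} with $k=4$. Since $k-3 = 1$, the condition (FORB) needs to be verified only for $\mathcal{F}$-free subsets $I \subseteq R$ of size at most $1$, i.e.\ the empty set and each singleton, and each $v \in \mathrm{Fix}(H)$ must satisfy $\deg_G(v) - \deg_R(v) \le 1$. In all configurations, $R$ consists of the black vertices in Figure~\ref{fig:reducibleA} and $\mathrm{Fix}(H)$ is chosen to be the set of vertices of $R$ with at most one outside neighbor (as indicated by the labels in the figure, which display the remaining list sizes $4 - \deg_G + \deg_R$). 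The first step is therefore to write down $R$ and $\mathrm{Fix}(H)$ explicitly and observe that $\mathrm{Fix}(H) \ne \emptyset$ in each case, which is immediate from the pictures.

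Next, I would verify (FIX) and (FORB) by bounded-size case analysis. For each choice of the precolored vertex $v \in \mathrm{Fix}(H)$ in (FIX), I would exhibit a greedy ordering of the vertices of $R$ showing that every $((4 - \deg_G + \deg_R) \downarrow v)$-assignment extends. For (FORB) I would do the same for each singleton $I = \{v\} \subseteq R$ and for $I = \emptyset$, using a greedy ordering that colors the vertex with the smallest list last. The verifications are entirely analogous to the proofs written out explicitly for configurations (C2) and (C5) in Lemma~\ref{reduciblemain567}, and can be either done by hand using the list sizes given in Figure~\ref{fig:reducibleA} or delegated to the short computer program available at \oururl, which tries all orderings and all list assignments of minimum size on each finite configuration.

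As a representative example I would write out the case of (D4): the reduced part $R$ is the pair of middle $3$-vertices (the side vertices are in the boundary), both middle vertices carry list size $4 - 4 + 1 = 1$ after subtracting outside degree so (FIX) with either vertex precolored is trivial, and the only $\mathcal{F}$-free singleton in (FORB) is handled identically; the two non-adjacent side vertices then form a loose set as required for (D4) to serve in an enhanced weak resolution via the second bullet of Definition~\ref{def:new_resolution}. For the remaining configurations, I would walk through the degree condition and confirm that whenever $v \in \mathrm{Fix}(H)$, indeed $\deg_G(v) - \deg_R(v) \le 1$; for instance, in (D3) the two degree-$3$ middle vertices of each diamond have all neighbors inside $R$, and in (D7), (D8), (D11), (D12) the pendant $3$-vertex attached to the high-degree vertex has its single outside neighbor accounted for.

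The main obstacle is not the colorability check itself, which is routine greedy algebra once the list sizes are read off the figure, but rather making sure that for each configuration (i)~at least one vertex of $R$ satisfies the stricter enhanced (FIX) degree bound $\deg_G - \deg_R \le 1$, and (ii)~all vertices that need (FORB) applied to them satisfy $\deg_G - \deg_R \le 2$, so the lists stay large enough to admit greedy extensions. Cases like (D7), (D8), (D11), and (D12) are delicate because they contain a $5$- or $6$-vertex whose full degree is not bounded; here the pendant $3$-vertex is the one placed in $\mathrm{Fix}(H)$, and one must verify that the remaining list sizes $(1,2,\ldots)$ on the diamond/triangle admit a proper coloring regardless of which vertex is precolored or whose list is shortened. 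Once this bookkeeping is done case by case, the lemma follows.
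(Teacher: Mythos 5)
Your overall plan (choose $R$ and $\mathrm{Fix}(H)$ as in Figure~\ref{fig:reducibleA}, note that for $k=4$ the enhanced (FORB) only concerns singletons, and check each (FIX)/(FORB) instance on a bounded configuration) matches the paper's approach, but there is a genuine gap in the verification step: you claim every instance can be discharged by exhibiting a greedy ordering, and this is false for several of the D-configurations. Concretely, take (D7), i.e.\ $Dia(5-3,4,4)$ with a pendant $3$-vertex on the $5$-vertex, and apply (FORB) to the singleton consisting of the middle $3$-vertex of the diamond. The residual list sizes are $3$ on both middle vertices and $2$ on both side vertices (plus $2$ on the pendant vertex). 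In any ordering, the last vertex of the diamond to be colored has at least as many neighbors still relevant as colors in its list (middles: $3$ colors, $3$ diamond neighbors; sides: $2$ colors, $2$ neighbors), so no greedy ordering succeeds against all list assignments. The configuration \emph{is} colorable, but the argument is degree-choosability: the diamond is $2$-connected and neither complete nor an odd cycle, hence not a Gallai tree, so it is $L$-colorable whenever $|L(v)|\ge\deg(v)$. This is exactly the point the paper flags in its proof (``the greedy algorithm is not always sufficient\dots we also added test for Gallai tree''), and their computer verification relies on that extra criterion. Your proof as written would get stuck at these instances; you need to replace ``greedy ordering'' by a Gallai-tree/degree-choosability argument (or an explicit two-case recoloring argument) wherever a full diamond with middle lists of size $3$ and side lists of size $2$ remains.

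A secondary issue: your representative worked example, (D4), is not among the configurations of this lemma — the statement deliberately excludes it, and the paper treats it in a separate lemma, precisely because its two middle $3$-vertices have $\deg_G-\deg_R=2>k-3$, so the enhanced (FIX) condition cannot hold for them; it is only weakly reducible and enters the resolution via the loose-set clause. Your computation ``$4-4+1=1$'' is also off (the middle vertices are $3$-vertices with $\deg_R=1$, giving list size $2$, as labeled in the figure). This does not damage the lemma itself, but the example should be replaced by one of (D2), (D3), (D5)--(D12), such as the paper's discussion of (D10), where the interesting point is which vertices may be placed in $\mathrm{Fix}(H)$.
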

\begin{proof}
Given the rules for enhanced weak $(\mathcal{F},k)$-boundary reducibility, it is straightforward to verify that configurations (D2)--(D12) are reducible. 
We also provide a computer program at \oururl{}\footnote{This program is also available as a part of the sources in our arXiv submission.} to do so. One notable difference is that the greedy algorithm is not always sufficient. We also added test for Gallai tree, which helped. Just greedy algorithm could end with a diamond, where middle vertices have lists $L$ of size 3 and side vertices would have lists of size 2, which is not a Gallai tree and hence it is $L$-colorable, but not in a greedy way.

In order to highlight the difference between regular and weak reducibility, we will give a short proof that (D10) is weakly $(\mathcal{F},k)$-boundary reducible, but not $(\mathcal{F},k)$-boundary reducible. Let $R$ be a subgraph of a graph $G$ defined by configuration (D10). Let $a,b,c$ and $d$ be vertices of $R$. The initial list sizes of a list assignment $L$ as defined by the function $(4 - (\text{deg}_G + \deg_R))$ are given in Figure~\ref{reduce:example:Dten}. 
\begin{figure}[H]
    \centering
        \begin{tikzpicture}[scale = 0.75]
    \baseConfigurationdiamond{}{}{}{}
    
    \draw[gedge] (x)--++(120:0.5);
    \draw[gedge] (x)--++(60:0.5);
    \draw[gedge] (v)--++(30:0.5);
    \draw[gedge] (v)--++(-30:0.5);
    \draw[gedge] (y)--++(270:0.5);
    
    \node at (-0.35,-1.5) {$d$};
    \node at (-0.35,1.5) {$b$};
    \node at (-1.5,0.35) {$a$};
    \node at (1.5,0.35) {$c$};
    
    \node at (0.35,-1.5) {$3$};
    \node at (0.35,1.5) {$2$};
    \node at (-1.5,-0.35) {$4$};
    \node at (1.5,-0.35) {$2$};
    \end{tikzpicture}
    \caption{Configuration (D10)}
    \label{reduce:example:Dten}
\end{figure}
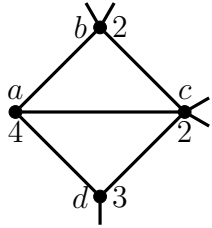
First we will show that we cannot fix the color of $a$ and still properly color $R$. Indeed, if the lists of $b$ and $c$ are identical and both contained the color assigned to $a$, there would be no proper $L$-coloring of $R$. 

That being said, if we fix the color of any other vertex in $R$, then we will still be able to properly $L$-color $R$. Therefore, we can only apply (FIX) to a subset of the vertices of $R$. Given the graphs in $\mathcal{F}$, it immediately follows that the graph $H = R$ is weakly $(\mathcal{F},k)$-boundary reducible, but as we have show, it is not $(\mathcal{F},k)$-boundary reducible.

In the enhanced version, the main trick is that we never need to check (FORB) on two adjacent vertices. We do that by allowing (FIX) only on vertices, where their external neighbors are non-adjacent. The easiest way to do so is to use (FIX) only vertices that have at most 1 outside neighbor. 
In case of (D10), the only option for (FIX) is the vertex $d$.
\end{proof}

The above works in all cases except (D1) and (D4). 
As the case (D1) was already discussed, we now justify the usage of the configuration (D4).

\begin{lemma}
  The configuration (D4) is weakly $(\mathcal{F},4)$-boundary-reducible and for all vertices $x$ in its reducible part holds that $|N(x)\cap R_j|\le 1$ or $N(x)\cap R_j$ is a loose set, where $R_j$ is a reducible part of some other configuration.
\end{lemma}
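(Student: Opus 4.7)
Write $R = \{u, v\}$ for the two middle $3$-vertices of the diamond in (D4) and $\{s_1, s_2\}$ for its two side boundary vertices; since $K_4 \in \mathcal{F}$, we have $s_1 s_2 \notin E(G)$. Because $\deg_G(u) = \deg_G(v) = 3$ and $\deg_R(u) = \deg_R(v) = 1$, each vertex of $R$ carries an effective list of size $k - \deg_G + \deg_R = 2$, and $H[R]$ is the single edge $uv$.

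For the weak $(\mathcal{F},4)$-boundary reducibility I would take $\mathrm{Fix}(H) = \{u\}$ (the choice $\{v\}$ is symmetric). The (FIX) condition then amounts to extending a one-colour choice at $u$ to $v$, which has a list of size two and always admits a legal colour. For (FORB) I would examine each $\mathcal{F}$-free subset $I \subseteq \{u, v\}$ with $|I| \le 2$: the cases $|I| \le 1$ are one-step greedy extensions, while for $I = \{u, v\}$ both vertices have singleton lists. Rather than attempting to colour, I would rule this case out structurally by showing $\{u, v\}$ is not $\mathcal{F}$-free: attaching a hypothetical vertex $w$ to both $u$ and $v$ inside $H$ creates the three triangles $u v s_1$, $u v s_2$, $u v w$ sharing the edge $uv$, giving a copy of the book $B_5$; this contains the forbidden $B_\ell \in \mathcal{F}$ for the value of $\ell$ at issue, placing $I = \{u, v\}$ outside the scope of (FORB).

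For the second, loose-set property, I would observe that every $z \in R$ satisfies $N_G(z) \setminus R \subseteq \{s_1, s_2\}$, because $\deg_G(z) = 3$ and the third neighbour of $z$ lies inside $R$. Thus for every reducible part $R_j$ of another configuration, $|N_G(z) \cap R_j| \le 2$, and the only case requiring work is when $\{s_1, s_2\} \subseteq R_j$. In that case $\{s_1, s_2\}$ is a non-adjacent pair inside $R_j$, and I would show it is loose in the enclosing $H_j$. This reduces to a finite check across (D1)--(D12): for each configuration enumerate the non-adjacent pairs inside its reducible part, discard those that are not $\mathcal{F}$-free, and verify that colouring $H_j[R_j]$ remains possible when the lists of both members of such a pair are restricted to one colour. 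This is exactly the type of greedy/Gallai-tree check already handled by the computer program referenced for the reducibility of (D2)--(D12).

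The main obstacle will be the $I = \{u, v\}$ step inside (FORB): producing the book $B_5$ as a certificate of non-$\mathcal{F}$-freeness has to be compatible with the specific $B_\ell$ included in $\mathcal{F}$ for the chosen $\ell$, which is the conceptual reason (D4) fails to be enhanced weakly reducible and only survives as a weakly reducible configuration. Once that is settled the remainder is administrative: a case-by-case listing of non-adjacent pairs in (D1)--(D12) and verification of their looseness, uniform across all configurations.
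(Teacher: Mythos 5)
Your handling of the second half of the statement follows the paper's own route: the only outside neighbours of the reduced part $\{u,v\}$ are the two side vertices $s_1,s_2$, which are non-adjacent since $K_4\in\mathcal{F}$, so the question reduces to inspecting non-adjacent $\mathcal{F}$-free pairs inside the reduced parts of (D1)--(D12), and the only such pair is the non-edge of (D9), which is loose. (Minor point: looseness asks for colourability after the lists of the pair are reduced by one, not literally made singletons, though for (D9) these coincide.)

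The genuine gap is in your (FORB) verification for $I=\{u,v\}$. You exclude this set by saying that a common neighbour $w$ creates a $B_5$ ``which contains the forbidden $B_\ell$'', but the containment goes the wrong way: $B_5$ contains $B_\ell$ only when $\ell\le 5$, whereas the lemma is stated for an arbitrary fixed $\ell$ (and Theorem~\ref{main567weak} needs exactly the case of large $\ell$). For $\ell>5$ the graph $H+w$ contains no member of $\mathcal{F}$ at all: no $K_4$ because $s_1,s_2,w$ are pairwise non-adjacent, no $C_5$ because a $5$-cycle cannot contain three pairwise non-adjacent vertices, and $C_6$, $C_7$, $B_\ell$ need more than five vertices. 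So $\{u,v\}$ \emph{is} $\mathcal{F}$-free in that regime, and the literal size-$(k-2)$ instance of (FORB) then fails whenever the two singleton lists coincide --- precisely the case you flag as ``the main obstacle'' and leave open, so the proposal does not prove the reducibility claim as stated for general $\ell$. The justification that is actually available (and the way the configuration is used in the paper) is structural: every neighbour of $u$ and of $v$ lies inside $H$, so no uncoloured vertex of $G$ outside the configuration can be adjacent to $u$ or $v$, and hence in the machinery of Lemma~\ref{lem:weak} and Claim~\ref{cl:implicit} the only sets whose lists are ever decremented on this configuration come from the boundary pair $\{s_1,s_2\}$, never from a set containing $u$ or $v$; the singleton cases together with (FIX) then suffice. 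Relatedly, your closing diagnosis is backwards: the set $\{u,v\}$ is irrelevant to \emph{enhanced} weak reducibility, whose (FORB) only ranges over sets of size at most $k-3=1$; (D4) fails the enhanced definition because of the degree restriction in enhanced (FIX), namely $\deg_G-\deg_R\le k-3$, which both $u$ and $v$ violate by having two boundary neighbours.
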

\begin{proof}
The check of the reducibility is straightforward.

The only vertices adjacent to the vertices in the reducible part are the two vertices in the boundary which are non-adjacent as $K_4\in\mathcal F$.
Therefore, it is sufficient to check non-adjacent non-$\mathcal F$-forbidding vertices in configurations (D1)--(D12).
As was already discussed the only such a non-edge is the non-edge in (D9) which forms a loose set.
\end{proof}

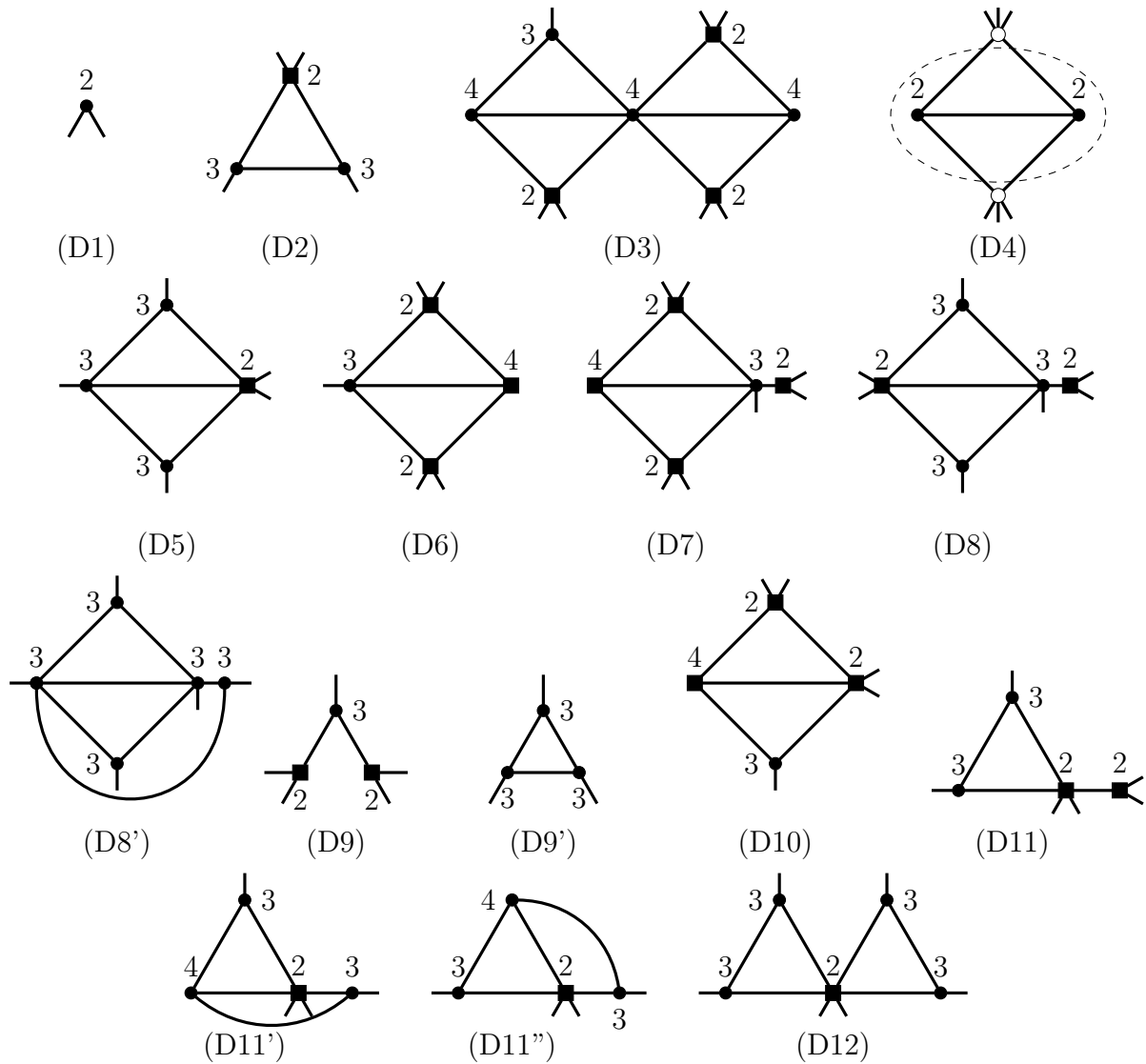
\begin{figure}[H]
    \centering
\begin{tikzpicture}
\node[vtx,label = $2$] (v) at (0,1) {};
\draw[gedge] (v) -- ++(240:0.5) {};
\draw[gedge] (v) -- ++(300:0.5) {};

\node at (0,-1) {(D1)};
\end{tikzpicture}\hspace{2 em}
\begin{tikzpicture}[scale = 0.75]
\baseConfigurationtriangle{$3$}{$2$}{$3$}

\draw[gedge] (w)--++(240:0.5) (u)--++(120:0.5) (u)--++(60:0.5) (v)--++(-60:0.5);
\path (w) ++(60:2) node (u)[vtxNoFIX] {};
\node at (2,-1.5) {(D2)};
\end{tikzpicture}\hspace{ 2 em }
    \begin{tikzpicture}[scale = 0.75]
    \baseConfigurationdiamond{$4$}{$4$}{$3$}{$2$}
    
    \node[vtxNoFIX,label = right:$2$] at (3,1.5) (a) {};
    \node[vtxNoFIX, label = right:$2$] at (3,-1.5) (b) {};
    \node[vtx,label = above:$4$] at (4.5,0) (c) {};
    
    \draw[gedge] (v)--(a) (v)--(b) (a)--(c) (b)--(c) (v)--(c);
    \draw[gedge] (x)--++(90:0.5) (y)--++(240:0.5) (y)--++(300:0.5) (a)--++(120:0.5) (a)--++(60:0.5) (b)--++(240:0.5) (b)--++(300:0.5);
 
    \node[vtxNoFIX] at (0,-1.5) {};
 
    \node at (1.5,-2.5) {(D{3})};
\end{tikzpicture} \hspace{2 em}
\begin{tikzpicture}[scale = 0.75]
\baseConfigurationdiamond{$2$}{$2$}{}{}
\draw[gedge] (x)--++(120:0.5);
\draw[gedge] (x)--++(60:0.5);
\draw[gedge] (x)--++(90:0.5);

\draw[gedge] (y)--++(270:0.5);
\draw[gedge] (y)--++(300:0.5);
\draw[gedge] (y)--++(240:0.5);

\draw[dashed] (0,0) ellipse[x radius = 2 cm, y radius = 1.25cm];
\node[bndry] at (x) {};
\node[bndry] at (y) {};
 
\node at (0,-2.5) {(D4)};
\end{tikzpicture}\hspace{2 em}
    \begin{tikzpicture}[scale = 0.75]
    \baseConfigurationdiamond{$3$}{$2$}{$3$}{$3$}
    
    \draw[gedge] (x)--++(90:0.5);
    \draw[gedge] (u)--++(180:0.5);
    \draw[gedge] (v)--++(30:0.5);
    \draw[gedge] (v)--++(-30:0.5);
    \draw[gedge] (y)--++(270:0.5);
    
    \node[vtxNoFIX] at (1.5,0) {};
    
    \node at (0,-3) {(D5)};
    \end{tikzpicture}
    \hspace{1 em}
    \begin{tikzpicture}[scale = 0.75]
    \baseConfigurationdiamond{$3$}{$4$}{$2$}{$2$}
    
    \draw[gedge] (x)--++(120:0.5);
    \draw[gedge] (x)--++(60:0.5);
    \draw[gedge] (u)--++(180:0.5);
    \draw[gedge] (y)--++(240:0.5);
    \draw[gedge] (y)--++(300:0.5);
    
    \node[vtxNoFIX] at (1.5,0) {};
    \node[vtxNoFIX] at (0,-1.5) {};
    \node[vtxNoFIX] at (0,1.5) {};
    \node at (0,-3) {(D6)};
    \end{tikzpicture}
    \hspace{1em}
    \begin{tikzpicture}[scale = 0.75]
    \baseConfigurationdiamond{$4$}{$3$}{$2$}{$2$}
    
    \node[vtxNoFIX,label = $2$] (z) at (2,0) {};
    \draw[gedge] (v)--(z);
    
    \draw[gedge] (x)--++(120:0.5);
    \draw[gedge] (x)--++(60:0.5);
    \draw[gedge] (y)--++(240:0.5);
    \draw[gedge] (y)--++(300:0.5);
    \draw[gedge] (v)--++(270:0.5);
    \draw[gedge] (z)--++(30:0.5);
    \draw[gedge] (z)--++(-30:0.5);
    
    \node[vtxNoFIX] at (-1.5,0) {};
    \node[vtxNoFIX] at (0,-1.5) {};
    \node[vtxNoFIX] at (0,1.5) {};
    
    \node at (0,-3) {(D7)};
    \end{tikzpicture}
    \hspace{1em}
    \begin{tikzpicture}[scale = 0.75]
    \baseConfigurationdiamond{$2$}{$3$}{$3$}{$3$}
    
    \node[vtxNoFIX,label = $2$] (z) at (2,0) {};
    \draw[gedge] (v)--(z);
    
    \draw[gedge] (u)--++(150:0.5);
    \draw[gedge] (u)--++(210:0.5);
    \draw[gedge] (y)--++(270:0.5);
    \draw[gedge] (v)--++(270:0.5);    
    \draw[gedge] (x)--++(90:0.5);
    \draw[gedge] (z)--++(30:0.5);
    \draw[gedge] (z)--++(-30:0.5);
    
    \node[vtxNoFIX] at (-1.5,0) {};
    
    \node at (0,-3) {(D8)};
    \end{tikzpicture}
    \hspace{1em}
    \begin{tikzpicture}[scale = 0.75]
    \baseConfigurationdiamond{$3$}{$3$}{$3$}{$3$}
    
    \node[vtx,label = $3$] (z) at (2,0) {};
    \draw[gedge] (v)--(z);
    
    \draw[gedge] (u)--++(180:0.5);
    \draw[gedge] (y)--++(270:0.5);
    \draw[gedge] (v)--++(270:0.5);    
    \draw[gedge] (x)--++(90:0.5);
    \draw[gedge] (z)--++(0:0.5);

    \draw[gedge] (z) to[out=270,in=270,looseness=2] (u);
    
    \node[vtx] at (-1.5,0) {};
    
    \node at (0,-3) {(D8')};
    \end{tikzpicture}    
    \begin{tikzpicture}
\node[vtxNoFIX,label = below:$2$] (w) at (1,0) {};
\path (w) ++(60:1) node (u)[vtx,label = right:$3$] {};
\path (w) ++(0:1) node (v)[vtxNoFIX,label=below:$2$] {};

\draw[gedge] (u) -- (v);
\draw[gedge] (w) -- (u);

\draw[gedge] (u) -- ++(90:0.5);
\draw[gedge] (v) -- ++(300:0.5);
\draw[gedge] (w) -- ++(240:0.5);
\draw[gedge] (w) -- ++(180:0.5);
\draw[gedge] (v) -- ++(0:0.5); 

\node at (1.5,-1) {(D9)};
\end{tikzpicture}
\hspace{2 em}
    \begin{tikzpicture}
\node[vtx,label = below:$3$] (w) at (1,0) {};
\path (w) ++(60:1) node (u)[vtx,label = right:$3$] {};
\path (w) ++(0:1) node (v)[vtx,label=below:$3$] {};

\draw[gedge] (u) -- (v);
\draw[gedge] (w) -- (u);

\draw[gedge] (u) -- ++(90:0.5);
\draw[gedge] (v) -- ++(300:0.5);
\draw[gedge] (w) -- ++(240:0.5);
\draw[gedge] (w) -- (v);

\node at (1.5,-1) {(D9')};
\end{tikzpicture} \hspace{2 em}
\begin{tikzpicture}[scale = 0.75]
    \baseConfigurationdiamond{$4$}{$2$}{$2$}{$3$}
    
    \draw[gedge] (x)--++(120:0.5);
    \draw[gedge] (x)--++(60:0.5);
    \draw[gedge] (v)--++(30:0.5);
    \draw[gedge] (v)--++(-30:0.5);
    \draw[gedge] (y)--++(270:0.5);
    
    \node[vtxNoFIX] at (-1.5,0) {};
     \node[vtxNoFIX] at (1.5,0) {};
      \node[vtxNoFIX] at (0,1.5) {};
    \node at (0,-3) {(D{10})};
    \end{tikzpicture}
    \hspace{1 em}
    \begin{tikzpicture}[scale = 0.75]
    \node[vtx,label = above:$3$] (w) at (1,0) {};
    \path (w) ++(60:2) node (u)[vtx,label = right:$3$] {};
    \path (w) ++(0:2) node (v)[vtxNoFIX,label = above:$2$] {};

    \path (v) ++(0:1) node (x)[vtxNoFIX,label = above:$2$] {}; 
    \draw[gedge] (u) -- (v);
    \draw[gedge] (w) -- (u);
    \draw[gedge] (w) -- (v);
    
    \draw[gedge] (v)--++(240:0.5);
    \draw[gedge] (v)--++(300:0.5);
    \draw[gedge] (u)--++(90:0.5);
    \draw[gedge] (w)--++(180:0.5);
    \draw[gedge] (x)--++(30:0.5);
    \draw[gedge] (x)--++(-30:0.5);
    \draw[gedge] (v)--(x);
    
    \node at (2,-1) {(D{11})};
    \end{tikzpicture}
    \hspace{1 em}
    \begin{tikzpicture}[scale = 0.75]
    \node[vtx,label = above:$4$] (w) at (1,0) {};
    \path (w) ++(60:2) node (u)[vtx,label = right:$3$] {};
    \path (w) ++(0:2) node (v)[vtxNoFIX,label = above:$2$] {};

    \path (v) ++(0:1) node (x)[vtx,label = above:$3$] {}; 
    \draw[gedge] (u) -- (v);
    \draw[gedge] (w) -- (u);
    \draw[gedge] (w) -- (v);
    
    \draw[gedge] (v)--++(240:0.5);
    \draw[gedge] (v)--++(300:0.5);
    \draw[gedge] (u)--++(90:0.5);
    \draw[gedge] (x)--++(0:0.5);
    \draw[gedge] (v)--(x);
    
    \draw[gedge] (x) to[bend left=40] (w);
    
    \node at (2,-1) {(D{11}')};
    \end{tikzpicture}
    \hspace{1 em}
    \begin{tikzpicture}[scale = 0.75]
    \node[vtx,label = above:$3$] (w) at (1,0) {};
    \path (w) ++(60:2) node (u)[vtx,label = left:$4$] {};
    \path (w) ++(0:2) node (v)[vtxNoFIX,label = above:$2$] {};

    \path (v) ++(0:1) node (x)[vtx,label = below:$3$] {}; 
    \draw[gedge] (u) -- (v);
    \draw[gedge] (w) -- (u);
    \draw[gedge] (w) -- (v);
    
    \draw[gedge] (v)--++(240:0.5);
    \draw[gedge] (v)--++(300:0.5);
    \draw[gedge] (w)--++(180:0.5);
    \draw[gedge] (x)--++(0:0.5);
    \draw[gedge] (v)--(x);
    
        \draw[gedge] (x) to[bend right=40] (u);
    
    \node at (2,-1) {(D{11}'')};
    \end{tikzpicture}
    \hspace{1 em}
    \begin{tikzpicture}[scale = 0.75]
    \node[vtx,label = above:$3$] (w) at (1,0) {};
    \path (w) ++(60:2) node (u)[vtx,label = left:$3$] {};
    \path (w) ++(0:2) node (v)[vtxNoFIX, label = above:$2$] {};

    \path (v) ++(0:2) node (x)[vtx,label = above:$3$] {}; 
    \path (v) ++(60:2) node (y)[vtx,label = right:$3$] {};
    \draw[gedge] (u) -- (v);
    \draw[gedge] (w) -- (u);
    \draw[gedge] (w) -- (v);
    
    \draw[gedge] (v)--++(240:0.5);
    \draw[gedge] (v)--++(300:0.5);
    \draw[gedge] (u)--++(90:0.5);
    \draw[gedge] (w)--++(180:0.5);
    \draw[gedge] (x)--++(0:0.5);
    \draw[gedge] (v)--(x);
    \draw[gedge] (v)--(y);
    \draw[gedge] (x)--(y);
    \draw[gedge] (y)--++(90:0.5);
    
    \node at (3,-1) {(D{12})};
    \end{tikzpicture}
\caption{Reducible configurations for Theorem~\ref{main567weak}. The labels give the list sizes remaining after accounting for the external neighbors and boundary vertices. The vertices whose colors cannot be fixed are drawn as squares. These vertices cannot be fixed because either their coloring does not extend or they have two external neighbors, with the one exception being (D4).   }\label{fig:reducibleA}
    
\end{figure}
\subsection{Discharging rules}

In this section, we prove the following Lemma~\ref{lem:unavoidable Bk}, that makes Theorem~\ref{main567weak} a corollary of Lemma~\ref{lem:weak}.

\begin{lemma}\label{lem:unavoidable Bk}
Let $G$ be a connected $\{K_4,C_5,C_6,C_7\}$-free plane graph.
Then $G$ contains at least one of the reducible configurations \emph{\textrm{(D1)--(D13)}}.
\end{lemma}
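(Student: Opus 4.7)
The plan is to mirror the discharging strategy of Lemma~\ref{lem:unavoidable B5}, but now tailored to the smaller and differently-flavored configuration list (D1)--(D12). Suppose for contradiction that $G$ is a connected $\{K_4, C_5, C_6, C_7\}$-free plane graph containing none of (D1)--(D12). Assign initial charges $ch(v) = \deg(v) - 4$ to each vertex $v$, $ch(f) = \ell(f) - 4$ to each face $f$, and $ch(e) = 0$ to each edge; by Euler's formula the total initial charge is $-8$. Because $C_5$, $C_6$, $C_7$ are forbidden, every face is a $3$-face, a $4$-face, or an $8^+$-face, and no two small faces can share a vertex without sharing the incident edge as well.

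I design discharging rules in three layers, in analogy to (R1)--(R9) of the earlier proof. First, each $8^+$-face sends $1/2$ across every shared edge to an adjacent $3$- or $4$-face, and every edge not bordering a small face absorbs $1$ total charge from its incident $8^+$-faces and then redistributes to $3$-vertex endpoints (full charge if only one endpoint is a $3$-vertex, $1/2$ each if both are). Second, each $4$-face sends $1$ to every incident $3$-vertex; since (D9) forbids three $3$-vertices on a path and (D1) forbids $2^-$-vertices, a $4$-face has at most two $3$-vertices and ends nonnegative. Third, $5^+$-vertices supply charge into their incident diamonds and triangles, targeting $3$-vertices; the amounts will be tuned so that every $3$-vertex inside a diamond ultimately accumulates at least $1$ and every $3$-vertex outside a diamond accumulates at least $1$ from its incident edges together with any adjacent $4$-face or lone triangle.

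Verification splits into four claims analogous to Claims~\ref{facecheck}--\ref{3vertdiamond}: an $8^+$-face $f$ releases at most $\lceil\ell(f)/2\rceil\le \ell(f)-4$ and so ends nonnegative; a $3$-face and a $4$-face collect enough from adjacent $8^+$-faces to cover their outflow; an edge never sends more than the $1$ it gathered, using (D9) and the diamond reducibles to rule out edges being pulled in two directions; and every vertex of degree $\ge 4$ retains nonnegative charge, where for $6^+$-vertices I adapt Lemma~\ref{deglem} to bound the number of incident diamonds and $3$/$4$-faces and hence the total outflow. For $5$-vertices the bound $\lfloor 5/3\rfloor = 1$ in Lemma~\ref{deglem} limits middle-of-diamond participation to one, which controls the outflow given the $1$ unit of initial surplus.

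The main obstacle is organizing the $3$-vertex-in-a-diamond case analysis. The hypothesis of the lemma does not include $B_b$-freeness, so the role played by book-freeness in the earlier proof must be absorbed entirely by the configurations (D1)--(D12). Configurations (D3) and (D10)--(D12) restrict how many $3$-vertices cluster at a common large-degree vertex, (D5)--(D8) eliminate the worst small-degree diamond patterns, and (D4) forces the side vertices of any $Dia(3-3,4^+,4^+)$ to carry external degree, opening an edge for routing charge. I will case on whether the target $3$-vertex is a side or middle vertex of its diamond and on the degrees of the other three vertices, and in each case appeal to one of the above reducibles to guarantee an inflow of at least $1$. Summing the final nonnegative charges contradicts the initial total of $-8$, finishing the proof.
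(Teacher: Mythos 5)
You have written a plan, not a proof: the two steps that constitute the entire content of this lemma --- the exact discharging rules (``the amounts will be tuned'') and the case-by-case verification of nonnegativity --- are left unexecuted, and they cannot be executed along the route you chose. Mirroring Lemma~\ref{lem:unavoidable B5} (charges $\deg(v)-4$ and $\ell(f)-4$, every $3$-vertex must collect $+1$) relies on reducible configurations that are present in (C1)--(C13) but have no counterpart in (D1)--(D12): the edge- and vertex-routing rules (R3c), (R3d) and the exclusions in the $5$-vertex analysis are justified precisely by (C4) and (C8)--(C13). Concretely, consider a diamond $Dia(4-3,4,5)$ whose middle $4$-vertex has a $3$-vertex as its unique external neighbor. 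This configuration contains none of (D1)--(D12) (the only candidates (D6), (D7), (D10) require different degree patterns, and the two $3$-vertices are nonadjacent, so (D2), (D9) do not apply), yet in your scheme the middle $3$-vertex has all three of its edges inside the diamond; the two incident $3$-faces and the surrounding $8^+$-faces have no slack (an $8$-face has charge $4$ and already releases up to $4$); the two $4$-vertices carry charge $0$; the external edge at the middle $4$-vertex must serve the pendant $3$-vertex, and the (R3c)-type transfer into the diamond was only legitimate because (C4) forbade exactly this pendant situation; and the lone side $5$-vertex has total budget $1$, with no mirrored rule ((R6) needs it to be a middle vertex, (R7b) needs both side vertices to be $5^+$) even directed at this $3$-vertex. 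So the target of $+1$ is unreachable without inventing new rules and redoing the whole verification, i.e.\ the gap is structural, not cosmetic. (A smaller error: your claim that two small faces cannot share a vertex without sharing an edge is false --- a bowtie at a degree-$4$ vertex is $\{C_5,C_6,C_7\}$-free; only three consecutive small faces are excluded, cf.\ Lemma~\ref{deglem}.)

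The paper handles the absence of a book hypothesis by changing the normalization rather than the routing: it sets $ch(v)=2\deg(v)-6$ and $ch(f)=\ell(f)-6$ (total $-12$), so $3$-vertices are charge-neutral and never need to receive anything; instead every $3$- and $4$-face must be paid for by its incident $4^+$-vertices (a $4$-vertex now has $+2$ to spend, a $5$-vertex $+4$), by neighbouring $8^+$-faces via (R1)--(R2), and by a final intra-diamond transfer (R13), with the verification organized into Claims~\ref{5.4}, \ref{claim 5}, and \ref{lem:diamond} (vertices, faces outside diamonds, and the paired faces of a diamond). In the example above this works because the middle $4$-vertex alone contributes $2$ to the diamond's faces. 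To repair your write-up you would either have to adopt such a face-centric charging or add and prove new reducible configurations beyond (D1)--(D12), which the statement does not allow.
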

\begin{proof}
Assume for contradiction that $G$ is a $\{K_4,C_5,C_6,C_7\}$-free plane graph with no (D1)--(D12). We will use discharging to arrive to a contradiction. 

For every vertex $v$ assign the initial charge $ch(v) := 2\deg(v)-6$, and every face $f$ assign $ch(f) := \ell(f)-6$, where $\ell(F)$ is the length of the facial walk around $f$.
By Euler's formula, the total initial charges of all vertices and faces is $-12$. 
We sequentially apply the following rules that transfer charge. 
The charge after applying all the rules is called the \emph{final charge}.
We will show that the final charge is nonnegative for every vertex and every face, which is a contradiction with the total sum of all charges being $-12$.

\begin{itemize}
\item[(R1)] Every $8^+$-face sends charge $\frac{1}{4}$ to every incident $3$-face and $4$-face for every edge they have in common.

\item[(R2)] 
For every $3$-vertex $v$ that is incident to a triangle $t$ and an edge $uv$ that is not part of any triangle, the following applies. The two faces\footnote{may be the same face twice if $uv$ is a bridge} that are incident to $uv$,  each send the following charge to $t$:
\begin{enumerate}
    \item[(R2a)] $\frac{1}{8}$  if $\deg(u) = 3$,  
    \item[(R2b)] $\frac{1}{4}$ if $\deg(u) \geq 4$.
\end{enumerate} 

\item[(R3)] Every $4$-vertex sends charge 1 to every 3-face and 4-face adjacent to it. 
\item[(R4)] Every $5$-vertex sends charge 1 to every 4-face adjacent to it.
\item[(R5)] Every 5-vertex that is a middle vertex in
$Dia(5-3,4,4)$,
$Dia(5-3,3,5^+)$, or
$Dia(5-5,3,3)$
sends charge 1.5 to every 3-face of such diamond.

\item[(R6)] Every 5-vertex $v$, where rule (R5) does not apply, sends charge 1 to every 3-face of a diamond having $v$ as a middle vertex. 
\item[(R7)] 
For every 3-face $f=\{v,u,w\}$ and $5$-vertex $v$ such that $f$ is not part of a diamond having $v$ as a middle vertex, the following applies.
\begin{enumerate}
    \item[(R7a)] If both $\deg(u)\ge 4$ and $\deg(w)\ge 4$, then $v$ sends charge 1 to  $f$. 
    \item[(R7b)] Otherwise $v$ sends charge 2 to  $f$. 
\end{enumerate}

\item[(R8)] 
Every $6^+$-vertex $v$ sends charge 1 to every 4-face adjacent to it, and 2 to every 3-face $f$ adjacent to it, unless $f$ is part of a diamond having $v$ as a middle vertex.

\item[(R9)]

Every 6-vertex $v$ sends charge $1.75$ to every 3-faces of every $Dia(6-3,3,4)$ that contains $v$.

\item[(R10)] Every 6-vertex $v$ sends charge 1.5 to each 3-face of $Dia(6-3,4,4)$ that contains $v$.

\item[(R11)] Every 6-vertex $v$ sends charge 1.25 to every 3-face of any diamond $d$ having $v$ as a middle vertex,
where (R9) and (R10) did not apply.

\item[(R12)] Every $7^+$-vertex $v$ sends charge 1.75 to every 3-face of any diamond having $v$ as a middle vertex.

\item[(R13)] For every two 3-faces $f,g$ that form a diamond, if $g$ has positive charge while $f$ has negative charge, then $g$ gives $f$ all its positive charge. 
\end{itemize}

\begin{figure}[H]
    \centering
        \begin{tikzpicture}[scale = 0.75]
    \baseConfigurationtriangleUnlabeled
    \draw (v) node[vtx,label=right:$v$]{};
    \draw[gedge] (v)--++(240:0.5);
    \draw[gedge] (v)--++(300:0.5);
    
    \path[->]
    \chargepath{(v)}{dashedge}{}{}{(2,0.5)}; 
    
    \node at (1.8,0.6) {\footnotesize{$1$}};
    \node at (2,-1) {(R3)};
    \end{tikzpicture} \hspace{ 1 em}
    \begin{tikzpicture}[scale = 0.75]
    \baseConfigurationsquareUnlabeled
    \draw (v) node[vtx,label=left:$v$]{};
    \draw[gedge] (v)--++(240:0.5);
    \draw[gedge] (v)--++(300:0.5);
    
    \path[->]
    \chargepath{(v)}{dashedge}{}{}{(0,0)};
    
    \node at (0.2,0.2) {\footnotesize{$1$}};
    \node at (0,-2) {(R3)};
    \end{tikzpicture} \hspace{1 em}
    \begin{tikzpicture}[scale = 0.75]
    \baseConfigurationsquareUnlabeled
    \draw (v) node[vtx,label=left:$v$]{};
    \draw[gedge] (v)--++(240:0.5);
    \draw[gedge] (v)--++(300:0.5);
    \draw[gedge] (v)--++(270:0.5);
    
    \path[->]
    \chargepath{(v)}{dashedge}{}{}{(0,0)};
    
    \node at (0.2,0.2) {\footnotesize{$1$}};
    \node at (0,-2) {(R4)};
    \end{tikzpicture} \hspace{ 1 em }
    \begin{tikzpicture}[scale = 0.5]
    \node[vtx, label = $v$] (v) at (-2,0) {};
    \node[vtx] (u) at (2,0) {};
    \node[vtx] (x) at (0,2) {};
    \node[vtx] (y) at (0,-2) {};
    
    \draw [gedge] (u) -- (v)  (u) -- (x)  (u) -- (y)  (v) -- (x) (v) -- (y);
    
    \draw[gedge] (x) -- ++(60:0.75);
    \draw[gedge] (x) -- ++(120:0.75);
    
    \draw[gedge] (y) -- ++(240:0.75);
    \draw[gedge] (y) -- ++(300:0.75);
    
    \draw[gedge] (v) -- ++(150:0.75);
    \draw[gedge] (v) -- ++(210:0.75);
    \path[->]
    \chargepath{(v)}{dashedge}{}{}{(0,0.5)}
    \chargepath{(v)}{dashedge}{}{}{(0,-0.5)};
    
    \node at (0.5,0.5) {\footnotesize{$1.5$}};
    \node at (0.5,-0.5) {\footnotesize{$1.5$}};
    \node at (0,-4) {(R5)};
    \end{tikzpicture} \hspace{1 em}
    \begin{tikzpicture}[scale = 0.5]
    \node[vtx, label = $v$] (v) at (-2,0) {};
    \node[vtx] (u) at (2,0) {};
    \node[vtx] (x) at (0,2) {};
    \node[vtx] (y) at (0,-2) {};
    
    \draw [gedge] (u) -- (v)  (u) -- (x)  (u) -- (y)  (v) -- (x) (v) -- (y);
    
    \draw[gedge] (x) -- ++(90:0.75);
    
    \draw[gedge] (y) -- ++(240:0.75);
    \draw[gedge] (y) -- ++(270:0.75);
    \draw[gedge] (y) -- ++(300:0.75);
    
    \node at (-0.5,-2) {$+$};
    
    \draw[gedge] (v) -- ++(150:0.75);
    \draw[gedge] (v) -- ++(210:0.75);
    \path[->]
    \chargepath{(v)}{dashedge}{}{}{(0,0.5)}
    \chargepath{(v)}{dashedge}{}{}{(0,-0.5)};

    \node at (0.5,0.5) {\footnotesize{$1.5$}};
    \node at (0.5,-0.5) {\footnotesize{$1.5$}};
    
    \node at (0,-4) {(R5)};
    \end{tikzpicture} \hspace{1 em}
    \begin{tikzpicture}[scale = 0.5]
    \node[vtx, label = $v$] (v) at (-2,0) {};
    \node[vtx] (u) at (2,0) {};
    \node[vtx] (x) at (0,2) {};
    \node[vtx] (y) at (0,-2) {};
    
    \draw [gedge] (u) -- (v)  (u) -- (x)  (u) -- (y)  (v) -- (x) (v) -- (y);
    
    \draw[gedge] (x) -- ++(90:0.75);
    
    \draw[gedge] (y) -- ++(270:0.75);
    
    \draw[gedge] (v) -- ++(150:0.75);
    \draw[gedge] (v) -- ++(210:0.75);
    
    \draw[gedge] (u) -- ++(30:0.75);
    \draw[gedge] (u) -- ++(-30:0.75);
    \path[->]
    \chargepath{(v)}{dashedge}{}{}{(0,0.5)}
    \chargepath{(v)}{dashedge}{}{}{(0,-0.5)};

    \node at (0.5,0.5) {\footnotesize{$1.5$}};
    \node at (0.5,-0.5) {\footnotesize{$1.5$}};
    
    \node at (0,-4) {(R5)};
    \end{tikzpicture} \hspace{1 em}
     \begin{tikzpicture}[scale = 0.5]
    \node[vtx, label = $v$] (v) at (-2,0) {};
    \node[vtx] (u) at (2,0) {};
    \node[vtx] (x) at (0,2) {};
    \node[vtx] (y) at (0,-2) {};
    
    \draw [gedge] (u) -- (v)  (u) -- (x)  (u) -- (y)  (v) -- (x) (v) -- (y);
    
    \draw[gedge] (v) -- ++(150:0.75);
    \draw[gedge] (v) -- ++(210:0.75);
    
    \path[->]
    \chargepath{(v)}{dashedge}{}{}{(0,0.5)}
    \chargepath{(v)}{dashedge}{}{}{(0,-0.5)};

    \node at (0.4,0.5) {\footnotesize{$1$}};
    \node at (0.4,-0.5) {\footnotesize{$1$}};
    
    \node at (0,-4) {(R6)};
    \end{tikzpicture} \hspace{1 em}
    \begin{tikzpicture}[scale = 0.75]
    \baseConfigurationtriangleUnlabeled
    \draw (v) node[vtx,label=right:$v$]{};
    
    \draw[gedge] (v)--++(270:0.5);
    \draw[gedge] (v)--++(300:0.5);
    \draw[gedge] (v)--++(240:0.5);
    
    \draw[gedge] (u)--++(60:0.5);
    \draw[gedge] (u)--++(120:0.5);
    
    \draw[gedge] (w)--++(240:0.5);
    \draw[gedge] (w)--++(300:0.5);
    
    \path[->]
    \chargepath{(v)}{dashedge}{}{}{(2,0.5)}; 
    
    \node at (1.8,0.6) {\footnotesize{$1$}};
    \node at (2,-1) {(R7a)};
    \end{tikzpicture}\hspace{ 1 em}
    \begin{tikzpicture}[scale = 0.75]
    \baseConfigurationtriangleUnlabeled
    \draw (v) node[vtx,label=right:$v$]{};

    \draw[gedge] (v)--++(270:0.5);
    \draw[gedge] (v)--++(300:0.5);
    \draw[gedge] (v)--++(240:0.5);
    
    \draw[gedge] (u)--++(60:0.5);
    \draw[gedge] (u)--++(120:0.5);
    
    \draw[gedge] (w)--++(240:0.5);
    
    \path[->]
    \chargepath{(v)}{dashedge}{}{}{(2,0.5)}; 
    
    \node at (1.8,0.6) {\footnotesize{$2$}};
    \node at (2,-1) {(R7b)};
    \end{tikzpicture}\hspace{ 1 em}
    \begin{tikzpicture}[scale = 0.75]
    \baseConfigurationsquareUnlabeled
    \draw (v) node[vtx,label=left:$v$]{};
    
    \draw[gedge] (v)--++(240:0.5);
    \draw[gedge] (v)--++(260:0.5);
    \draw[gedge] (v)--++(280:0.5);
    \draw[gedge] (v)--++(300:0.5);

    \path[->]
    \chargepath{(v)}{dashedge}{}{}{(0,0)};
    
    \node at (0.2,0.2) {\footnotesize{$1$}};
    \node at (0,-2) {(R8)};
    \end{tikzpicture} \hspace{1 em}
    \begin{tikzpicture}[scale = 0.75]
    \baseConfigurationtriangleUnlabeled
    \draw (v) node[vtx,label=right:$v$]{};

    \draw[gedge] (v)--++(260:0.5);
    \draw[gedge] (v)--++(300:0.5);
    \draw[gedge] (v)--++(240:0.5);
    \draw[gedge] (v)--++(280:0.5);
    
    \path[->]
    \chargepath{(v)}{dashedge}{}{}{(2,0.5)}; 
    
    \node at (1.8,0.6) {\footnotesize{$2$}};
    \node at (2,-1) {(R8)};
    \end{tikzpicture}\hspace{ 1 em}
        \begin{tikzpicture}[scale = 0.5]
    \node[vtx, label = $v$] (v) at (-2,0) {};
    \node[vtx] (u) at (2,0) {};
    \node[vtx] (x) at (0,2) {};
    \node[vtx] (y) at (0,-2) {};
    
    \draw [gedge] (u) -- (v)  (u) -- (x)  (u) -- (y)  (v) -- (x) (v) -- (y);
    
    \draw[gedge] (x) -- ++(90:0.75);
    
    \draw[gedge] (y) -- ++(240:0.75);
    \draw[gedge] (y) -- ++(300:0.75);
    
    \draw[gedge] (v) -- ++(150:0.75);
    \draw[gedge] (v) -- ++(210:0.75);
    \draw[gedge] (v) -- ++(180:0.75);
    
    \path[->]
    \chargepath{(v)}{dashedge}{}{}{(0,0.5)}
    \chargepath{(v)}{dashedge}{}{}{(0,-0.5)};

    \node at (0.6,0.5) {\footnotesize{$1.75$}};
    \node at (0.6,-0.5) {\footnotesize{$1.75$}};
    
    \node at (0,-4) {(R9)};
    \end{tikzpicture} \hspace{1 em}
    \begin{tikzpicture}[scale = 0.5]
    \node[vtx, label = $v$] (v) at (-2,0) {};
    \node[vtx] (u) at (2,0) {};
    \node[vtx] (x) at (0,2) {};
    \node[vtx] (y) at (0,-2) {};
    
    \draw [gedge] (u) -- (v)  (u) -- (x)  (u) -- (y)  (v) -- (x) (v) -- (y);
    
    \draw[gedge] (x) -- ++(60:0.75);
    \draw[gedge] (x) -- ++(120:0.5);
    
    \draw[gedge] (y) -- ++(240:0.75);
    \draw[gedge] (y) -- ++(300:0.75);
    
    \draw[gedge] (v) -- ++(150:0.75);
    \draw[gedge] (v) -- ++(210:0.75);
    \draw[gedge] (v) -- ++(180:0.75);
    
    \path[->]
    \chargepath{(v)}{dashedge}{}{}{(0,0.5)}
    \chargepath{(v)}{dashedge}{}{}{(0,-0.5)};

    \node at (0.5,0.5) {\footnotesize{$1.5$}};
    \node at (0.5,-0.5) {\footnotesize{$1.5$}};
    
    \node at (0,-4) {(R10)};
    \end{tikzpicture} \hspace{1 em}    
    \begin{tikzpicture}[scale = 0.75]
    \baseConfigurationdiamond{}{}{}{}
    
    \draw[gedge] (u)--++(180:0.5) (u)--++(150:0.5) (u)--++(210:0.5);
    
    \path[->]
    \chargepath{(u)}{dashedge}{}{}{(0,0.5)}
    \chargepath{(u)}{dashedge}{}{}{(0,-0.5)};

    \node at (0.45,0.45) {\footnotesize{$1.25$}};
    \node at (0.45,-0.45) {\footnotesize{$1.25$}};
    
    \node at (0,-2) {(R11)};
    \end{tikzpicture} \hspace{2 em}
    \begin{tikzpicture}[scale = 0.75]
    \baseConfigurationdiamond{}{}{}{}
    
    \draw[gedge] (u)--++(170:0.5) (u)--++(150:0.5) (u)--++(210:0.5) (u)--++(190:0.5);
    
    \path[->]
    \chargepath{(u)}{dashedge}{}{}{(0,0.5)}
    \chargepath{(u)}{dashedge}{}{}{(0,-0.5)};

    \node at (0.45,0.45) {\footnotesize{$1.75$}};
    \node at (0.45,-0.45) {\footnotesize{$1.75$}};
    
    \node at (0,-2) {(R12)};
    \end{tikzpicture}
    \caption{Discharging Rules for Theorem~\ref{main567weak}}
\end{figure}
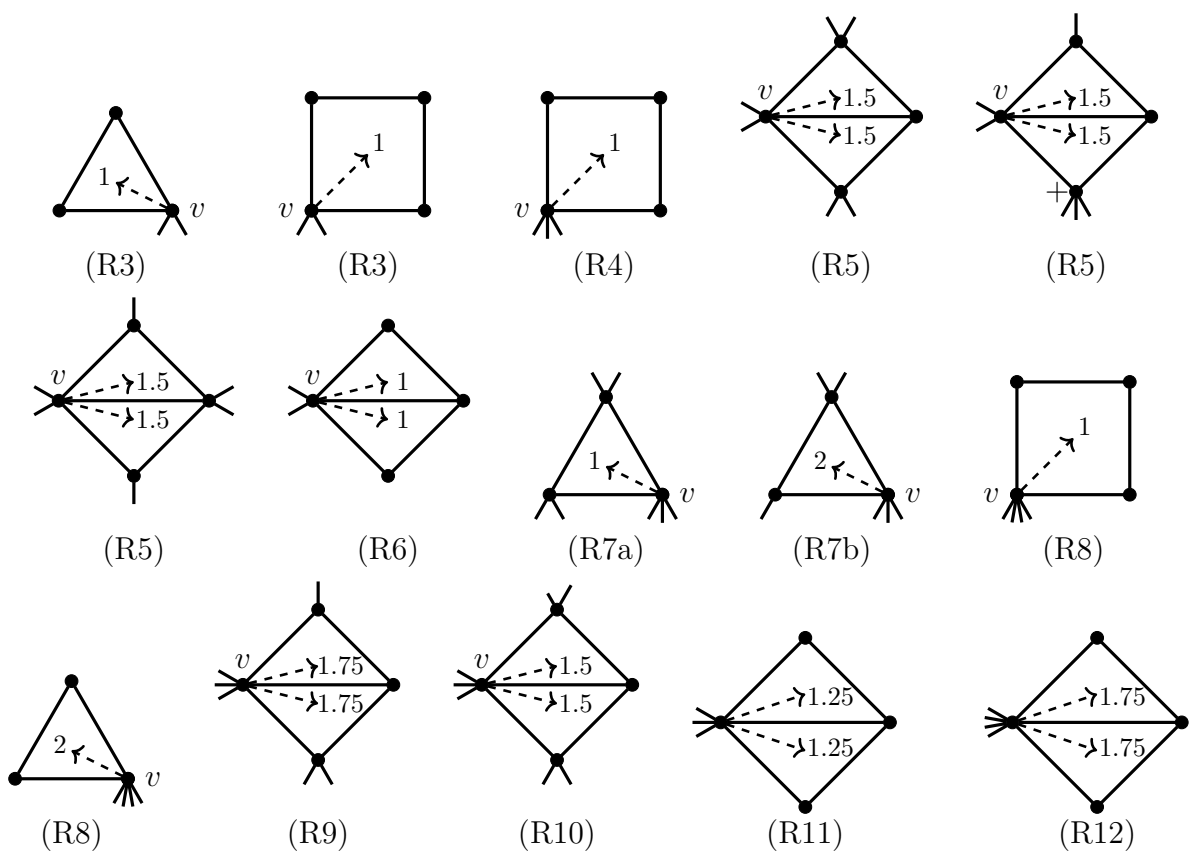

\begin{claim}\label{5.4}
The final charge of every vertex is nonnegative. 
\end{claim}
\begin{proof}
There are no vertices of degree less 
than 3, by (D1). The initial charge of a $3$-vertex is 0, and this does not change in the discharging process. 
A $4$-vertex $v$ has initial charge 2. It can be adjacent to at most two $4^-$-faces, or otherwise a cycle $C_k$ with $5 \le k \le 7$ is created. 
Therefore (R3) applies on $v$ at most twice and no other rules apply. Hence $v$ has a nonnegative final charge. 

Let $v$ be a $5$-vertex that is not a middle vertex of a diamond. Note that $v$ can be adjacent to at most two faces of size at most 4, or otherwise a cycle $C_k$ with $5 \le k \le 7$ is created. 
Thus, the initial charge of $v$ is 4,  
 and (R4) and (R7) are applied together at most twice, implying that $v$ has nonnegative final charge. 

Let $v$ be a $5$-vertex that is a middle vertex of a diamond $d$. Then $v$ is adjacent to at most one more face $f$ of size at most 4, and $f$ does not share any edge with $d$, or otherwise a cycle $C_k$ with $5 \le k \le 7$ is created.
If (R5) does not apply to $v$, then by (R4), (R6) and (R7), $v$ sends 1 to each of the two 3-faces in $d$ and at most 2 to $f$, leaving $v$ with final nonnegative charge.
Suppose (R5), where $v$ sends charge 3 to the faces in $d$, applies to $v$. If $v$ sends charge of at most 1 to $f$, then it has final nonnegative charge. So by (R4) and (R7a) we may assume that $f$ is a triangle $\{v,u,w\}$ with $d(u)=3$ (and  $d(w) \le 4$).
See Figure~\ref{fig:vtx} for an illustration. But then $G$ contains (D7), (D11), or (D8) as $d$ is $Dia(5-3,4,4)$,
$Dia(5-3,3,5^+)$, or
$Dia(5-5,3,3)$, respectively. Hence (R7b) does not apply to $v$ and the final charge is nonnegative.

\begin{figure}[h]
    \begin{center}
    \begin{tikzpicture}[scale = 0.5]
    \baseConfigurationdiamond{$v$}{}{}{}
    \node[vtx, label = left:$u$] at (-3,1) (a) {};
    \node[vtx, label = left:$w$] at (-3,-1) (b) {};
    
    \node at (-2.5,0) {$f$};
    
    \draw[gedge] (u)--(a) (u)--(b) (a)--(b);
    
    \draw[gedge] (x)--++(120:1) (x)--++(60:1) (y)--++(240:1) (y)--++(300:1);
    
    \draw[gedge] (a)--++(90:1) (b)--++(240:1) (b)--++(300:1);
    
    \draw[dashed, rounded corners=20pt] (-5,1.5)  -- (0,3.5) -- (2.5,0) --  (0,-3.5) -- (-2.5,-0.75) -- cycle;
    \end{tikzpicture} \hspace{ 1 cm }
        \begin{tikzpicture}[scale = 0.5]
    \baseConfigurationdiamond{$v$}{}{}{}
    \node[vtx, label = left:$u$] at (-3,1) (a) {};
    \node[vtx, label = left:$w$] at (-3,-1) (b) {};
    
    \node at (-2.5,0) {$f$};
    
    \draw[gedge] (u)--(a) (u)--(b) (a)--(b);
    
    \draw[gedge] (x)--++(90:1) (y)--++(240:1) (y)--++(300:1) (y)--++(270:1);
    
    \draw[gedge] (a)--++(90:1) (b)--++(240:1) (b)--++(300:1);
    
    \draw[dashed, rounded corners=20pt] (-5,1.5)  -- (0,3.5) -- (2.5,0) --  (0,-1.25) -- (-2.5,-0.75) -- cycle;
    
    \node at (0,-3.25) {};
    \end{tikzpicture}\hspace{ 1 cm }
        \begin{tikzpicture}[scale = 0.5]
    \baseConfigurationdiamond{$v$}{}{}{}
    \node[vtx, label = left:$u$] at (-3,1) (a) {};
    \node[vtx, label = left:$w$] at (-3,-1) (b) {};
    
    \node at (-2.6,0) {$f$};
    
    \draw[gedge] (u)--(a) (u)--(b) (a)--(b);
    
    \draw[gedge] (x)--++(90:1) (y)--++(270:1) (v)--++(30:1) (v)--++(-30:1);
    
    \draw[gedge] (a)--++(90:1) (b)--++(240:1) (b)--++(300:1);
    
    \draw[dashed, rounded corners=20pt] (-5,1.5)  -- (0,3.5) -- (3.5,0) --  (0,-3.5) -- (-2.5,-0.75) -- cycle;
    \end{tikzpicture}
    \end{center}
    \caption{Three cases in Claim \ref{5.4}.}
    \label{fig:vtx}
\end{figure}
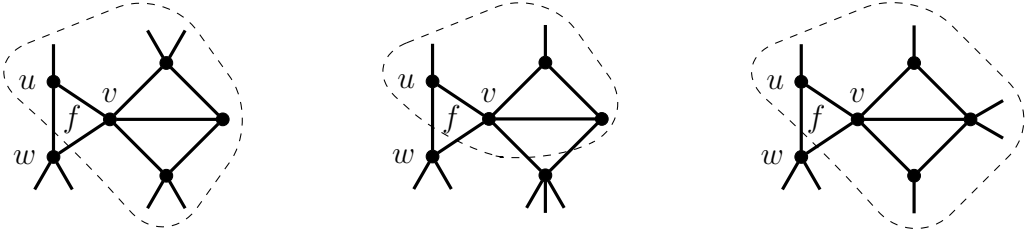

Let $v$ be a $6$-vertex that is the middle vertex of $k$ diamonds and it is adjacent to $m$ faces of size 3 or 4 that are not part of a diamond in which $v$ is a middle vertex. 
By Lemma~\ref{deglem}, $6 \ge 3k + 2m$.
Recall that $ch(v)=6$.
Suppose $k=2$, then $m=0$. 
By by (D12) and (D3), 
(R9) cannot apply twice and (R9) cannot apply at the same time as (R10).
Then by (R9)--(R11), the final charge of $v$ is at least $6-3.5-2.5 = 0$ or $6-3-3 = 0$. 
If $k=1$ and $m\le 1$, then by (R8)--(R11), the final charge of $v$ is at least $6-3.5-2 > 0$. 
Finally, if $k=0$ and $m\le 3$ then by (R8), the final charge of $v$ is at least $6-3\cdot 2 = 0$.

Let $v$ be a $7^+$-vertex that is the middle vertex of $k$ distinct diamonds, and $v$ is adjacent to $m$ faces of sizes 3 and 4 that are not part of a diamond in which $v$ is a middle vertex. Then by Lemma~\ref{deglem}, $\deg(v) \ge 3k + 2m$. By (R12) $v$ sends total weight of $3.5k$ to the $k$ diamonds in which $v$ is a middle vertex, and by (R8) it sends at most $2m$ to the other faces of size at most 4 it is adjacent to. Altogether, the final charge of $v$ is  at least $$2\deg(v) -6 - 3.5k -2m = 2\deg(v) -6 - (3k+2m) -k/2 \ge \deg(v) - 6 -\frac{1}{2}\cdot\Big\lfloor \frac{\deg(v)}{3} \Big\rfloor,  $$ where the last inequality follows from  Lemma \ref{deglem}, and $\deg(v) - 6 -\frac{1}{2}\cdot\Big\lfloor \frac{\deg(v)}{3} \Big\rfloor \ge 0$ whenever $\deg(v)\ge 7$. 
\end{proof}

\begin{claim}\label{claim 5}
The final charge of every face that is not contained in a diamond is nonnegative. 
\end{claim}
\begin{proof}
By (R1) and (R2), an $8^+$-face $f$ sends out a total charge of at most $\frac{\ell(f)}{4}$. Thus the final charge of $f$  is at least $\ell(f) - 6 - \frac{\ell(f)}{4}= \frac{3\ell(f)}{4} -6$ which is nonnegative  if $\ell(f)\ge 8$. 

Let $f$ be a $3$-face that is not part of any diamond. Then the faces sharing an edge with $f$ must be of size at least 8, since otherwise one of them is of size at most 4, which forces a diamond or a cycle $C_i$ with $5\le i\le 7$ together with $f$.
Hence (R1) applies three times with $f$ and $f$ has charge $-3 + \frac{3}{4} =-2.25$ after (R1).

By (D1) and (D2), one of the following holds (see Figure \ref{fig:claim 5}):
\begin{enumerate}
    \item[(1)] $f$ is $T(3,3,5^+)$
\item[(2)] $f$ is $T(3,4^+,4^+)$, or 
    \item[(3)] $f$ if $T(4^+,4^+,4^+)$.
\end{enumerate}

\begin{figure}[H]
    \centering
    \begin{tikzpicture}
    \baseConfigurationtriangleUnlabeled

    \draw[gedge] (w)--++(190:0.5) (w)--++(230:0.5) (w)--++(210:0.5) (v)--++(-30:0.5) (u)--++(90:0.5);
    \draw (2,-1) node  {$T(3,3,5^+)$};
    \end{tikzpicture} \hspace{ 2 em }
    \begin{tikzpicture}
    \baseConfigurationtriangleUnlabeled

    \draw[gedge] (w)--++(200:0.5) (w)--++(220:0.5) (v)--++(340:0.5) (v)--++(320:0.5) (u)--++(90:0.5);
        \draw (2,-1) node  {$T(3,4^+,4^+)$};
\end{tikzpicture} \hspace{2 em }
    \begin{tikzpicture}
    \baseConfigurationtriangleUnlabeled

    \draw[gedge] (w)--++(200:0.5) (w)--++(220:0.5) (v)--++(340:0.5) (v)--++(320:0.5) (u)--++(80:0.5)  (u)--++(100:0.5);
        \draw (2,-1) node  {$T(4^+,4^+,4^+)$};
\end{tikzpicture}
 \caption{Three possible triangles in Claim \ref{claim 5}}
    \label{fig:claim 5}
\end{figure}
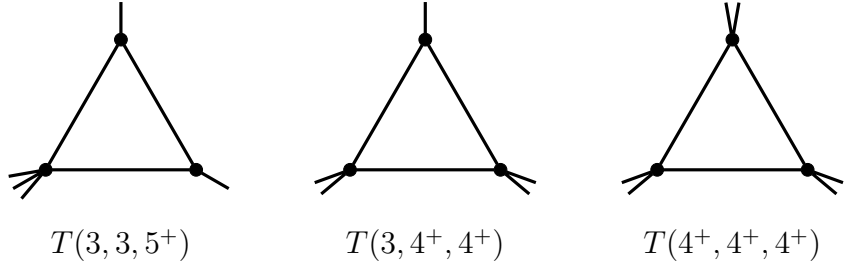

In case (1), (R2a) applies twice
 giving charge $\frac{4}{8}$ to $f$. In addition, (R7b) or (R8) applies and the final charge of $f$ is at least
$-3+\frac{3}{4}+\frac{1}{2}+2 \geq 0$. 

In case (2), (R2) applies once, giving charge $\frac{2}{8}$ to $f$. 
Rules (R3), (R7) and (R8) apply together twice with $f$, each time $f$ receives charge at least 1, and thus the final charge of $f$ is at least
$-3+\frac{3}{4}+\frac{1}{4}+2 \geq 0$. 

In case (3), rules (R3), (R7), and (R8) together apply three times to $f$ and thus the final charge of $f$ is at least $-3+\frac{3}{4} + 3 >0$. 

If $f$ is a 4-face, the faces sharing an edge with $f$ must be of size at least 8, since otherwise one of them is of size at most 4, which forces a cycle $C_i$ of size  $5\le i\le 7$ with $f$.
Hence (R1) applies four times with $f$ contributing charge $\frac{4}{4}$. 
By (D9) $f$ has at least two $4^+$-vertex.
Thus at least one of (R3), (R4), and (R8) applies to $f$, giving charge $1$ to $f$. 
Hence the final charge of $f$ is at least $-2 + 1 +1 \geq 0$.
\end{proof}

\begin{claim}\label{lem:diamond}
The final charge of every 3-face that is contained in a diamond is nonnegative. 
\end{claim}
\begin{proof}
In the light of (R13), we will consider the faces that form a diamond together in pairs
and show that as a pair, they receive sufficient charge.
Let $f$ and $g$ be 3-faces sharing an edge, i.e. they form a diamond.
Observe that in this case the other faces sharing edges with $f$ and $g$ must be of size at least $8$, for otherwise one of them is of size at most 4, which forces a cycle $C_i$ of size  $5\le i\le 7$ with $f$ and $g$. 
Therefore, (R1) applies twice to each $f$ and $g$ and $ch_1(f)=ch_1(g) = -3 + \frac{2}{4} = -2.5$.
Hence we aim to show that $f$ and $g$ together receive at least 5 more charge.
We denote the the vertices of $f$ by $u,v,x$ where $u,v$ are shared with $g$, and by $y$ the third vertex of $g$.

By symmetry, we assume that  $\deg(u) \ge \deg(v)$ and $\deg(y) \ge \deg(x)$. 
Note that by (D4), $\deg(u) \geq 4$ 
and by (D1) the degree of each of the other vertex is at least 3.

We split into cases based on the type of diamond $f$ and $g$ form.

\begin{itemize}
\item $Dia(4-3,\star,\star)$\\
By (D2) and (D6), $\deg(x) \ge 4$ and $\deg(y) \ge 5$. Hence we are in case
 $Dia(4-3,4^+,5^+)$.
For $u$, (R3) applies twice, for $x$  one of (R3), (R7b), or (R8) applies, and for $y$ one of (R7b), or (R8) applies.
Thus the charge $f$ and $g$ receive using these rules is at least $3\cdot 1 + 2 = 5$.
Hence the final charges of $f$ and $g$ are nonnegative.

\item $Dia(5-3,3,3)$,  $Dia(5-3,3,4)$\\
Reducible by (D9) and (D10).

\item $Dia(5-3,3,5^+)$\\
In this case, (R5) applies to $u$ and (R7b) or (R8) applies to $y$.
This gives charge $2\cdot 1.5 + 2 = 5$. 
Hence the final charges of $f$ and $g$ are nonnegative.

\item $Dia(5-3,4,4)$ \\
In this case, (R5) applies to $u$.
In addition (R3) applies to both $x$ and $y$.
This gives charge $2\cdot 1.5 + 1+1 = 5$. 
Hence the final charges of $f$ and $g$ are nonnegative.

\item  $Dia(5-3,4^+,5^+)$\\
In this case, (R6) applies to $u$.
In addition (R3), (R7b), or (R8) applies to $x$ and
(R7b), or (R8) to $y$.
This gives charge at least $2\cdot  1 + 1 +2 = 5$. 
Hence the final charges of $f$ and $g$ are nonnegative.

\item $Dia(6-3,3,3)$\\
Reducible by (D9).

\item $Dia(6-3,3,4)$\\
By (R9), $u$ contributes charge $3.5$ and by (R4), $y$ contributes charge $1$.
Let $z$ be a neighbor of $x$ that is not $u$ or $v$. By (D9), $\deg(z) \geq 4$
Hence the application of (R2b) around $x$ contributes charge $1/2$. 
This gives total charge $3.5+1+0.5 = 5$.
Hence the final charges of $f$ and $g$ are nonnegative.

\item $Dia(6-3,3,5^+)$ \\
By (R11), $u$ contributes charge $2.5$ and by (R7b) or (R8), $y$ contributes charge $2$.
Let $z$ be a neighbor of $x$ that is not $u$ or $v$. By (D9), $\deg(z) \geq 4$
Hence the application of (R2) around $x$ contributes charge $1/2$. 
This gives total charge $2.5+2+0.5 = 5$.
Hence the final charges of $f$ and $g$ are nonnegative.

\item $Dia(6-3,4,4)$ \\
By (R10), $u$ contributes charge $3$ and by (R3), $x$ and $y$ each contribute charge $1$.
This gives total charge $3+1+1 = 5$.
Hence the final charges of $f$ and $g$ are nonnegative.

\item $Dia(6-3,4^+,5^+)$\\
By (R11), $u$ contributes charge $2.5$, by (R3), (R7b) or (R8), $x$ contributes charge at least $1$,
and by (R7b) or (R8), $y$ contributes charge 2.
This gives total charge at least $2.5+1+2 = 5$.
Hence the final charges of $f$ and $g$ are nonnegative.

\item $Dia(7^+-3,3,3)$\\ 
Reducible by (D9).
 
\item $Dia(7^+-3,3,4^+)$\\
By (R12), $u$ contributes charge $3.5$, by (R3), (R7b) or (R8), $y$ contributes charge at least $1$.
Let $z$ be the neighbor of $x$ that is not $u$ or $v$. By (D9), $\deg(z) \geq 4$
Hence the application of (R2b) around $x$ contributes charge $1/2$. 
This gives total charge at least $3.5+1+0.5 = 5$.
Hence the final charges of $f$ and $g$ are nonnegative.

\item $Dia(7^+-3,4^+,4^+)$\\
By (R12), $u$ contributes charge $3.5$, by (R3), (R7b) or (R8), $x$ and $y$ each contribute charge at least $1$.
This gives total charge at least $3.5+1+1 = 5.5$.
Hence the final charges of $f$ and $g$ are nonnegative.

\item $Dia(4-4,\star,\star)$ and $Dia(5-4,\star,\star)$\\
By (D5), $\deg(y) \geq 4$.
By (R3) or (R6), $u$ and $v$ together contribute charge $4$, by (R3), $y$ each contributes charge $1$.
This gives total charge at least $4+1 = 5$.
Hence the final charges of $f$ and $g$ are nonnegative.

\item $Dia(6^+-4^+,\star,\star)$\\
By (R3), (R6), (R11), and (R12), $u$ and $v$ together contribute charge at least $1.25+1.25+1+1$.
If (R3), (R7), or (R8) applies to $y$, then the total charge is at least $5.5$. 
Hence we can assume the case $Dia(6^+-4^+,3,3)$. Then (R2a) or (R2b) applies at each $x$ and $y$ and the total contribution is at least $0.5$. 
This gives total charge at least $4.5+0.5 = 5$.
Hence the final charges of $f$ and $g$ are nonnegative.

\item $Dia(5-5,3,3)$\\
By (R5), $u$ and $v$ together contribute charge at least $4\times 1.5 = 6$.
Hence the final charges of $f$ and $g$ are nonnegative.

\item $Dia(5-5,3^+,4^+)$\\
By (R6), $u$ and $v$ together contribute charge at least $4\times 1 = 4$.
By (R3), (R7a), or (R8), $y$ contributes charge at least 1. 
This gives total charge at least $4+1 = 5$.
Hence the final charges of $f$ and $g$ are nonnegative.
\end{itemize}

This concludes the proof of Claim~\ref{lem:diamond}.
\end{proof}

Since all final charges are nonnegative, this concludes the proof of Lemma~\ref{lem:unavoidable Bk}.
\end{proof}

\bigskip
\noindent\textbf{Acknowledgements.~~}
The work was initiated at
Dagstuhl Seminar on ``Graph Colouring: from Structure to Algorithms''(19271).
Part of the work was carried out when T.~Masařík was visiting Iowa State University.
He thanks Steve Butler for kind hospitality.
We would like to thank Ilkyoo Choi for fruitful discussions at the beginning of this project.

\bibliographystyle{plainurl}
\bibliography{flexibility}

\begin{thebibliography}{10}

\bibitem{Bradshaw}
Peter Bradshaw, Tomáš Masařík, and Ladislav Stacho.
\newblock Flexible list colorings in graphs with special degeneracy conditions.
\newblock In {\em 31st International Symposium on Algorithms and Computation
  (ISAAC 2020)}, volume 181 of {\em Leibniz International Proceedings in
  Informatics (LIPIcs)}, pages 24:1--24:14, 2020.
\newblock \href {https://doi.org/10.4230/LIPIcs.ISAAC.2020.24}
  {\path{doi:10.4230/LIPIcs.ISAAC.2020.24}}.

\bibitem{choiclemen}
Ilkyoo Choi, Felix~Christian Clemen, Michael Ferrara, Paul Horn, Fuhong Ma, and
  Tomáš Masařík.
\newblock Flexibility of planar graphs -- sharpening the tools to get lists of
  size four, 2020.
\newblock \href {http://arxiv.org/abs/2004.10917} {\path{arXiv:2004.10917}}.

\bibitem{discharging}
Daniel~W. Cranston and Douglas~B. West.
\newblock An introduction to the discharging method via graph coloring.
\newblock {\em Discrete Mathematics}, 340(4):766--793, 2017.
\newblock \href {https://doi.org/10.1016/j.disc.2016.11.022}
  {\path{doi:10.1016/j.disc.2016.11.022}}.

\bibitem{dvorak}
Zden{\v{e}}k Dvo{\v{r}}{\'{a}}k, Tom{\'{a}}{\v{s}} Masa{\v{r}}{\'{\i}}k, Jan
  Mus{\'{\i}}lek, and Ond{\v{r}}ej Pangr{\'{a}}c.
\newblock Flexibility of triangle-free planar graphs, February 2019.
\newblock \href {http://arxiv.org/abs/1902.02971} {\path{arXiv:1902.02971}}.

\bibitem{dvorak2}
Zden{\v{e}}k Dvo{\v{r}}{\'{a}}k, Tom{\'{a}}{\v{s}} Masa{\v{r}}{\'{\i}}k, Jan
  Mus{\'{\i}}lek, and Ond{\v{r}}ej Pangr{\'{a}}c.
\newblock Flexibility of planar graphs of girth at least six.
\newblock {\em Journal of Graph Theory}, April 2020.
\newblock \href {https://doi.org/10.1002/jgt.22567}
  {\path{doi:10.1002/jgt.22567}}.

\bibitem{dvoraknorin}
Zden{\v{e}}k Dvo{\v{r}}{\'{a}}k, Sergey Norin, and Luke Postle.
\newblock List coloring with requests.
\newblock {\em Journal of Graph Theory}, 92(3):191--206, January 2019.
\newblock \href {https://doi.org/10.1002/jgt.22447}
  {\path{doi:10.1002/jgt.22447}}.

\bibitem{fijavzjuvan}
Gašper Fijav{\v{z}}, Martin Juvan, Bojan Mohar, and Riste {\v{S}}krekovski.
\newblock Planar graphs without cycles of specific lengths.
\newblock {\em European Journal of Combinatorics}, 23(4):377--388, May 2002.
\newblock \href {https://doi.org/10.1006/eujc.2002.0570}
  {\path{doi:10.1006/eujc.2002.0570}}.

\bibitem{lamxu}
Peter Che~Bor Lam, Baogang Xu, and Jiazhuang Liu.
\newblock The 4-choosability of plane graphs without 4-cycles.
\newblock {\em Journal of Combinatorial Theory, Series B}, 76(1):117--126, May
  1999.
\newblock \href {https://doi.org/10.1006/jctb.1998.1893}
  {\path{doi:10.1006/jctb.1998.1893}}.

\bibitem{masarik}
Tomáš Masařík.
\newblock Flexibility of planar graphs without 4-cycles.
\newblock {\em Acta Mathematica Universitatis Comenianae}, 88(3):935--940,
  August 2019.
\newblock URL:
  \url{http://www.iam.fmph.uniba.sk/amuc/ojs/index.php/amuc/article/view/1182}.

\bibitem{wanglih}
Weifan Wang and Ko-Wei Lih.
\newblock Choosability and edge choosability of planar graphs without five
  cycles.
\newblock {\em Applied Mathematics Letters}, 15(5):561--565, July 2002.
\newblock \href {https://doi.org/10.1016/s0893-9659(02)80007-6}
  {\path{doi:10.1016/s0893-9659(02)80007-6}}.

\bibitem{yangyang}
Donglei Yang and Fan Yang.
\newblock Flexibility of planar graphs without {$C_4$} and {$C_5$}, 2020.
\newblock \href {http://arxiv.org/abs/2006.05243} {\path{arXiv:2006.05243}}.

\end{thebibliography}

\end{document}